\newcommand{\numberset}{\mathbb}
\newcommand{\N}{\numberset{N}}
\newcommand{\R}{\numberset{R}}
\newcommand{\Pk}{\numberset{P}}
\newcommand{\Pkc}{\numberset{P}^{\rm cont}}
\newcommand{\diver}{{\rm div}}
\newcommand{\bdiver}{\boldsymbol{\diver}}
\newcommand{\bnabla}{\boldsymbol{\nabla}}
\newcommand{\bcurl}{\boldsymbol{{\rm curl}}}
\newcommand{\beps}{\boldsymbol{\epsilon}}
\newcommand{\jump}[1]{\lbrack\!\lbrack\,#1\,\rbrack\!\rbrack}
\newcommand{\media}[1]{\left\{\!\left\{\,#1\,\right\}\!\right\}}
\newcommand{\normas}[1]{\left\|#1\right\|_{\rm S}}
\newcommand{\normam}[1]{\left\|#1\right\|_{\rm M}}
\newcommand{\normaupw}[1]{\left|#1\right|_{\rm upw}}
\newcommand{\normacip}[1]{\left|#1\right|_{\rm cip}}
\newcommand{\normacurl}[1]{\left|#1\right|_{\rm \bcurl}}
\newcommand{\normastab}[1]{\left\|#1\right\|_{\rm stab}}
\newcommand{\normai}[1]{{\left\vert\kern-0.25ex\left\vert\kern-0.25ex\left\vert#1\right\vert\kern-0.25ex\right\vert\kern-0.25ex\right\vert}}
\newcommand{\nn}{\boldsymbol{n}}
\newcommand{\Edges}{\Sigma_h}
\newcommand{\EdgesB}{\Sigma_h^{\partial}}
\newcommand{\EdgesI}{\Sigma_h^{\rm {int}}}
\newcommand{\EdgesE}{\Sigma_h^E}
\renewcommand{\epsilon}{\varepsilon}
\renewcommand{\theta}{\vartheta}
\renewcommand{\rho}{\varrho}
\renewcommand{\phi}{\varphi}
\newcommand{\zz}{\boldsymbol{\zeta}}
\newcommand{\BB}{\boldsymbol{B}}
\newcommand{\HH}{\boldsymbol{H}}
\newcommand{\GG}{\boldsymbol{G}}
\newcommand{\WW}{\boldsymbol{W}}
\newcommand{\TT}{\boldsymbol{\Theta}}
\newcommand{\uu}{\boldsymbol{u}}
\newcommand{\vv}{\boldsymbol{v}}
\newcommand{\ww}{\boldsymbol{w}}
\newcommand{\cc}{\boldsymbol{\chi}}
\newcommand{\ff}{\boldsymbol{f}}
\newcommand{\pp}{\boldsymbol{p}}
\newcommand{\xx}{\boldsymbol{x}}
\newcommand{\pss}{\sigma_{\rm S}}
\newcommand{\psm}{\sigma_{\rm M}}
\newcommand{\ns}{\nu_{\rm S}}
\newcommand{\nm}{\nu_{\rm M}}
\newcommand{\bdma}{\mu_a}
\newcommand{\bdmc}{\mu_c}
\newcommand{\bdmJ}{\mu_{J_1}}
\newcommand{\bdmJJ}{\mu_{J_2}}
\newcommand{\WWc}{\boldsymbol{W}}
\newcommand{\WWd}{\boldsymbol{W}^h_k}
\newcommand{\Hunozero}{\boldsymbol{H}^1_0(\Omega)}
\newcommand{\VVc}{\boldsymbol{V}}
\newcommand{\VVd}{\boldsymbol{V}^h_k}
\newcommand{\ZZc}{\boldsymbol{Z}}
\newcommand{\ZZd}{\boldsymbol{Z}^h_k}
\newcommand{\Qc}{Q}
\newcommand{\Qd}{Q^h_{k}}
\newcommand{\am}{a^{\rm M}}
\newcommand{\as}{a^{\rm S}}
\newcommand{\ash}{a^{\rm S}_h}
\newcommand{\astab}{\mathcal{A}_{\rm stab}}
\newcommand{\ccoe}{c_{\rm coe}}
\newcommand{\intcip}{\mathcal{I}_{\mathcal{O}}}
\newcommand{\Pzerok}[1]{\Pi_{#1}}
\newcommand{\PWWd}{\mathcal{I}_{\WWc}}
\newcommand{\PVVd}{\mathcal{I}_{\VVc}}
\newcommand{\omegaEh}{\Omega_h^{M}}
\newcommand{\omegaz}{\omega_{\boldsymbol{\zeta}}}
\newcommand{\uui}{\PVVd \uu}
\newcommand{\vvi}{\PVVd \vv}
\newcommand{\eei}{\boldsymbol{e}_{\mathcal{I}}}
\newcommand{\eeh}{\boldsymbol{e}_h}
\newcommand{\BBi}{\PWWd \BB}
\newcommand{\EEi}{\boldsymbol{E}_{\mathcal{I}}}
\newcommand{\EEh}{\boldsymbol{E}_h}
\newcommand{\regu}{s}
\newcommand{\regb}{r}
\newcommand{\ls}{\Lambda_{\rm S}}
\newcommand{\gm}{\Gamma_{\rm M}}
\newcommand{\gs}{\Gamma_{\rm S}}
\newcommand{\ts}{\Phi_{\rm S}}
\newtheorem{theorem}{Theorem}
\newtheorem{proposition}[theorem]{Proposition}
\newtheorem{lemma}[theorem]{Lemma}
\newtheorem{remark}[theorem]{Remark}
\author[1,2]{{{L. Beir\~ao da Veiga}} 
\thanks{lourenco.beirao@unimib.it}}
\author[3,2]{{{C. Lovadina}} 
\thanks{carlo.lovadina@unimi.it}}
\author[1]{{{M. Trezzi}} 
\thanks{manuel.trezzi@unimib.it}}
\affil[1]{Dipartimento di Matematica e Applicazioni, Universit\`a degli Studi di Milano Bicocca, Via Roberto Cozzi 55 - 20125 Milano, Italy}  
\affil[2]{IMATI-CNR, Via Adolfo Ferrata 5 - 27100 Pavia, Italy}
\affil[3]{Dipartimento di Matematica ``F. Enriques'', Universit\`a degli Studi di Milano, Via Cesare Saldini 50 - 20133 Milano, Italy}
\title{A robust finite element method for linearized magnetohydrodynamics on general domains}
\begin{document}

\maketitle

\begin{abstract}
\noindent 
We generalize and improve the finite element method for linearized Magnetohydrodynamics introduced in \cite{BDV:2024}. The main novelty is that the proposed scheme is able to handle also non-convex domains and less regular solutions. The method is proved to be pressure robust and quasi-robust with respect to both fluid and magnetic Reynolds numbers. 
A set of numerical tests confirms our theoretical findings.
\end{abstract}

\section{Introduction}
Recently, the field of magnetohydrodynamics (MHD) has garnered increasing attention within the computational mathematics community. MHD equations are relevant in the study of plasmas and liquid metals, finding applications across geophysics, astrophysics, and engineering. The combination of fluid dynamics and electromagnetism equations results in a variety of models. These models have different formulations and lead to several finite element choices, each with its own advantages and disadvantages. Examples of relevant works in this area include  \cite{badia,VEM,da2023robust,dong,Gerbeau2,Greif,Guermond,Hiptmair,Houston,Perugia2,Perugia1,Prohl,Schotzau,Si:2015:TAD,Wacker}
(this list is not exhaustive).

In this paper we consider, as in \cite{BDV:2024}, the linearized version of the following three field formulation of the MHD problem:
\begin{equation}
\label{eq:non-linear primale}
\left\{
\begin{aligned}
 & \text{ find $(\uu, p, \BB)$ such that}\\
 &\begin{aligned}
 \uu_t -  \ns \, \bdiver (\beps (\uu) ) + (\bnabla \uu ) \,\uu    +  \BB \times \bcurl(\BB)  - \nabla p &= \ff \qquad  & &\text{in $\Omega \times I$,} 
 \\
 \diver \, \uu &= 0 \qquad & &\text{in $\Omega \times I$,} 
 \\
 \BB_t +  \nm \, \bcurl (\bcurl (\BB) ) - \bcurl(\uu \times \BB)  &= \GG \qquad  & &\text{in $\Omega \times I$,} 
 \\
 \diver \, \BB &= 0 \qquad & &\text{in $\Omega \times I$.} 
 \end{aligned}
 \end{aligned}
 \right.
 \end{equation}
Above, $\Omega$ is a spatial three-dimensional domain, while $I$ is a time interval. Moreover, $\uu$, $p$ and $\BB$ are the velocity, pressure and magnetic fields, respectively; the parameters $\nu_S$ an $\nu_M$ represent the fluid and the magnetic diffusion coefficients, respectively. 
We recall that in several engineering and physical applications, the scaled parameters  $\nu_S$ and, potentially, $\nu_M$ are quite small. For instance, in aluminum electrolysis, $\nu_S$ is approximately {\tt 1e-5} and $\nu_M$ is around {\tt 1e-1} (see, for example, \cite{Armero,Berton}). Even smaller values of $\nu_M$ are encountered, for instance, in geophysics and space weather prediction problems. When these parameters are small, it is well-known that standard finite elements become unstable and stabilization techniques are necessary to achieve reliable numerical results. 
In the literature there are only a few finite element schemes that are (provably) robust for small values of $\nu_S,\nu_M$, such as \cite{Gerbeau2,Houston,Wacker,BDV:2024} for the linear case and, to our knowledge, only \cite{B.D.V.nonlinear} for the fully nonlinear case. 

The presented scheme takes the initial steps from the method in \cite{BDV:2024} but, contrary to such contribution which is focused to convex domains $\Omega$, we here allow for general domains, possibly non-convex. We highlight that this extension is not merely a straightforward adaptation of the tools used for the convex case. Instead, considering non-convex domains implies deeper changes, including a different framework for the variational formulation of Problem \eqref{eq:non-linear primale}, cf. Remark 2.2 in \cite{BDV:2024}. 

However, the finite element schemes we study in this paper maintain most of the significant features of the one detailed in \cite{BDV:2024}, namely:
 \begin{enumerate}
     \item pressure robustness, i.e. the fluid velocity error does not depend on the pressure error; 
     \item quasi-robustness with respect to dominant advection, i.e. assuming a sufficiently regular solution, the estimates are independent of both $\nu_S$ and $\nu_M$ in an error norm that controls convection);
     \item suitability for extension to the time-dependent case using a standard time-stepping scheme, unlike SUPG or similar methods.
 \end{enumerate}
To achieve these objectives also for general domains, rather than the space $\HH^1$ we first consider a variation formulation involving the space $\HH(\bcurl)$ for the magnetic field, and select the N\'ed\'elec finite element of second type for its discretization. 
As a consequence, the divergence free condition for the magnetic field $\BB$, see fourth equation of \eqref{eq:non-linear primale}, cannot be directly inserted in the variational formulation, but it is recovered by the assumption that $\GG$ is orthogonal to the gradients.

The fluid velocity and pressure are essentially treated as in \cite{BDV:2024}, as well as the same strategies to ensure robustness with respect to all regimes are employed. More precisely:
\begin{itemize}
     \item for pressure robustness, velocity is approximated by means of $\HH(\diver)$-conforming $\boldsymbol{{\rm BDM}}$ elements, while standard discontinuous piecewise polynomials are used for the pressure discretization. Hence, the velocity approximation space gives rise to a non-conforming scheme, which needs to be stabilized. Stability is then recovered by adding face terms in the spirit of the DG methods;
     \item in order to be robust with respect to the fluid diffusion parameter, a typical upwinding DG term is included in the variational formulation;
     \item to deal with the fluid-magnetic coupling, and to be robust also with respect to the magnetic diffusion parameter, suitable Continuous Interior Penalty (CIP) terms are introduced. We remark that, though following a similar strategy, the CIP terms we consider here are slightly different from the corresponding ones detailed in \cite{BDV:2024}.   
\end{itemize}

Even though we borrow some techniques and results from \cite{BDV:2024}, we highlight that our analysis represents a significant improvement over the one developed in that paper. 
In particular, it must be observed that the technique adopted in \cite{BDV:2024} to exploit the CIP (Continuous Interior Penalty) stabilization in the convergence proofs is not suitable for the present setting. The main reason is that such an approach, based on a peculiar orthogonality property, prevents the use of specific N\'ed\'elec interpolants, which are instead needed to handle the kind of solutions that are encountered in magnetic problems on non-convex domains. Therefore we take a different route, and prove a stronger stability with respect to a norm associated with the linearization of the convective term $\bcurl(\uu \times \BB)$. This approach, although initially more involved, has two advantages: 
\begin{enumerate}
    \item it allows more freedom in the choice of the interpolant during the convergence analysis, as commented here above;
    \item it shows stability in a stronger norm which also includes a magneto-advective term, thus providing a solid theoretical justification of what it is observed in the numerical experiments, i.e. the scheme robustness in the magneto-advective regime.
\end{enumerate}

In addition, we present some numerical results that clearly show the failure of the method described in \cite{BDV:2024} (well-suited for convex domains), when applied to a kind of 3D L-shaped domain. 




The paper is organized as follows. Section \ref{sec:model} introduces the continuous problem under consideration, i.e. the linearized version of Problem \eqref{eq:non-linear primale}, together with a suitable variational formulation, capable to deal with possibly non-convex domains.  
Section \ref{sec:prelim} presents notation and preliminary results, useful for the subsequent study. 
Section \ref{sec:discrete} details our proposed stabilized scheme, by introducing the necessary discrete spaces and forms.
Section \ref{sec:theoretical} develops the theoretical convergence analysis: first, we prove a stability result involving norms that can also handle the advection-dominated regime; secondly, the optimal error estimates in those norms are derived. 
The numerical tests are presented in Section \ref{sec:numerical}. As already mentioned, in addition to support the theoretical estimates, the numerical experiments give evidence that $\HH^1$-conforming approximations for the magnetic field are unable to converge when general non-convex domains are involved. 

Finally, throughout the paper we use standard notations for functional (Sobolev) spaces and their norms/seminorms. However, we explicitly recall the definition of the following spaces.

\begin{equation}\label{eq:spaces}
\begin{aligned}
    &L^2_0(\Omega):= \{v \in L^2(\Omega) \,\,\,\text{s.t.} \,\,\, \int_\Omega v = 0 \}\ ,\\
    &\HH_0(\diver, \Omega) := \{\vv \in \boldsymbol{L}^2(\Omega) \,\,\,\text{s.t.} \,\,\, \diver \,\vv \in L^2(\Omega) \,\,\, \text{and} \,\,\, \vv \cdot \nn = 0 \,\,\, \text{on $\partial \Omega$} \}\ ,\\
    &\HH(\bcurl, \Omega) := \{\HH \in \boldsymbol{L}^2(\Omega) \,\,\,\text{s.t.} \,\,\, \bcurl (\HH) \in \boldsymbol{L}^2(\Omega)  \}\ ,\\
    &\HH^r(\bcurl, \Omega) := \{\HH \in \boldsymbol{H}^r(\Omega) \,\,\,\text{s.t.} \,\,\, \bcurl (\HH) \in \boldsymbol{H}^r(\Omega)  \} \ \ \ (r> 0)\, .
\end{aligned}
\end{equation}

\section{Model problem}\label{sec:model}
A linearized version of problem \eqref{eq:non-linear primale} reads as
\begin{equation}
\label{eq:linear primale}
\left\{
\begin{aligned}
& \text{ find $(\uu, p, \BB)$ such that}\\
&\begin{aligned}
\pss \, \uu -  \ns \, \bdiver (\beps (\uu) ) + (\bnabla \uu ) \,\cc    +  \TT \times \bcurl(\BB) - \nabla p &= \ff \qquad  & &\text{in $\Omega$,} 
\\
\diver \, \uu &= 0 \qquad & &\text{in $\Omega$,} 
\\
\psm \, \BB +  \nm \, \bcurl (\bcurl (\BB) ) - \bcurl(\uu \times \TT)  &= \GG \qquad  & &\text{in $\Omega$,} 
\\
\diver \, \BB &= 0 \qquad & &\text{in $\Omega$,} 
\end{aligned}
\end{aligned}
\right.
\end{equation}
coupled with the homogeneous boundary conditions
\begin{equation}
\label{eq:linear bc cond}
\uu = 0 \,, \quad 
\BB \cdot \nn = 0 \,,  \quad 
\bcurl(\BB) \times \nn = 0  \quad \text{on }\partial \Omega \, .
\end{equation}
Here above we denote by $\pss$ and $\psm$ the reaction coefficients (constant for simplicity), and with $\TT, \cc$ the advective fields of the fluid and the magnetic field, respectively. For the time being, we assume $\TT \in \HH_0(\diver,\Omega) \cap  \boldsymbol{L}^3(\Omega)$, and $\cc \in \HH(\bcurl,\Omega) \cap  \boldsymbol{L}^3(\Omega)$ , with $\diver\,\cc = 0$.

We are interested in deriving a variational formulation of \eqref{eq:linear primale}. 
We introduce the spaces
\begin{equation}
\label{eq:spazi_c}
\VVc := \Hunozero \,, 
\quad
\WWc := \HH(\bcurl, \Omega)\,,
\quad
\Qc  := L^2_0(\Omega)  \,,
\end{equation}
that represent the velocity fields space, the magnetic induction space, and the space of pressures, respectively. 
Contrary to the convex setting, we emphasize that an  $\HH^1$-conforming space for the magnetic induction cannot be assumed. In the convex case, controlling both the divergence and the $\bcurl$ of a function ensures control over its gradient, which implies that the function belongs to $\HH^1(\Omega)$.
However, when the domain is non-convex, this control is weaker, and we can only conclude that the function lies in $\HH^{s}(\Omega)$ for some suitable $s > \frac{1}{2}$ depending on $\Omega$. 
In the velocity space, we introduce the following bilinear forms
\begin{equation}
\label{eq:forme_c1}
\as(\uu,  \vv) :=  (\beps(\uu), \, \beps (\vv) ) \,,
\qquad
c(\uu, \vv) :=  \left( ( \bnabla \uu ) \, \cc ,\, \vv  \right) \,,
\end{equation}
where the first one is obtained by multiplying the term $\diver(\beps(\uu))$ by a test function $\vv$, integrating by parts and using the boundary conditions. On $\WWc$, we define the form
\[
\am(\BB,  \HH) = (\bcurl (\BB), \bcurl (\HH)) \, .
\]
The coupling is represented by two forms: the first one controls the interaction between velocity and pressure, while the second one governs the coupling between velocity and magnetic induction
\begin{equation}
\label{eq:forme_c2}
\begin{aligned}
b(\vv, q) :=  (\diver \vv, \, q) \,, \qquad
d(\HH, \vv) :=  ( \bcurl (\HH ) \times \TT ,\, \vv) \,.
\end{aligned}
\end{equation}
We also introduce the kernel of the  bilinear form $b(\cdot,\cdot)$
that corresponds to the functions in $\VVc$ with vanishing divergence
\begin{equation}
\label{eq:Z}
\ZZc := \{ \vv \in \VVc \quad \text{s.t.} \quad \diver \, \vv = 0  \}\,.
\end{equation}
Then, we consider the following variational problem
\begin{equation}
\label{eq:linear variazionale}
\left\{
\begin{aligned}
& \text{find $(\uu, p, \BB) \in \VVc \times \Qc \times \WWc$,  such that} 
\\
&\begin{aligned}
\pss \, (\uu, \vv) + \ns \, \as(\uu, \vv) + c(\uu, \vv)  
-d(\BB, \vv)  + b(\vv, p) 
&= 
(\ff, \vv)  
&\,\,\, & \text{for all $\vv \in \VVc$,} 
\\
b(\uu, q) &= 0 
&\,\,\, & \text{for all $q \in \Qc$,}
\\
\psm \, (\BB, \HH) + \nm \, \am(\BB, \HH) + d(\HH, \uu) 
&= 
(\GG, \HH)  
&\,\,\, & \text{for all $\HH \in \WWc$.}
\end{aligned}
\end{aligned}
\right.
\end{equation}
As usual, we suppose that $\GG$ is $\boldsymbol{L}^2$-orthogonal to the gradients. We also remark that the boundary conditions on $\BB$, see \eqref{eq:linear bc cond}, are here weakly imposed. 
This problem is well posed by the Lax-Milgram lemma and standard theory of mixed problems.

\section{Notations and preliminary results}
\label{sec:prelim}

In this section, we introduce the notation and some useful results that will be used throughout the rest of the paper.
We consider a family of decompositions $\{ \Omega_h\}_h$ of the domain $\Omega$ into non-overlapping tetrahedrons $E$. 
We denote with $h_E$ the diameter of each element and with $h := \max_{E \in \Omega_h} h_E$ the  mesh size of $\Omega_h$.
Let $\mathcal{N}_h$ be the set of vertices in $\Omega_h$;
given $\zeta \in \mathcal N_h$, we denote with $\omegaz$ the collection of elements $E$ in $\Omega_h$ such that $\zeta \in \partial E$.
We adopt the same mesh assumptions as in \cite{BDV:2024}. The first assumption is classical in the FEM framework. It states that the elements in the decomposition are not excessively stretched, which is necessary to achieve optimal approximation results.

\smallskip
\noindent
\textbf{(MA1) Shape regularity assumption:}

\noindent
The mesh family $\left\{ \Omega_h \right\}_h$ is shape regular: it exists a  positive  constant $c_{\rm{M}}$ such that each element $E \in \{ \Omega_h \}_h$ is star shaped with respect to a ball of radius $\rho_E$ with  $h_E \leq c_{\rm{M}} \rho_E$. 

\smallskip
\noindent
The second assumption is specific to the case 
$k=1$. While this assumption is required for theoretical purposes, it is not overly restrictive.\\
\textbf{(MA2) Mesh agglomeration with stars macroelements:}

There exists a family of conforming meshes $\{ \widetilde{\Omega}_h \}_h$ of $\Omega$ with the following properties:
(i) it exists a positive constant $\widetilde{c}_{\rm{M}}$ such that each element $M \in \widetilde{\Omega}_h$ is a finite  (connected) agglomeration of elements in $\Omega_h$, i.e., it exists $\omegaEh \subset \Omega_h$ with ${\rm{card}}(\omegaEh) \leq \widetilde{c}_{\rm{M}}$ and $M = \cup_{E \in \omegaEh} E$;
(ii) for any $M \in \widetilde{\Omega}_h$ it exists $\zz \in \mathcal{N}_h$ such that $\omegaz \subseteq M$.
\smallskip
\noindent
We observe that assumption \textbf{(MA1)} readily implies that the mesh is locally quasi-uniform. This means that the diameter of a face is uniformly bounded by the diameters of the elements containing the face and that the sizes of two adjacent elements are comparable.

The set of all the faces in the mesh $\Omega_h$ is denoted by $\Edges$. This set is divided into internal faces $\EdgesI$ and boundary faces $\EdgesB$. Given an element $E\in\Omega_h$, the set of faces of that element is denoted with $\EdgesE$. 
Furthermore, given an element $E \in \Omega_h$, we denote with $\nn^E$ the unit outward normal to that element, while $\nn^f$ denotes the unit normal to a face $f$. In particular, if $f \in \EdgesI$ we have that $\nn^f =  \nn^E$ or $\nn^f = - \nn^E$, while if $ f \in \EdgesB$ it holds $\nn^f = \nn^E = \nn.$
The jump and the average operators on $f \in \EdgesI$ are defined for every piecewise continuous function w.r.t. $\Omega_h$ respectively by
\[
\begin{aligned}
\jump{\phi}_f(\xx) &:=
\lim_{s \to 0^+} \left( \phi(\xx - s \nn_f) - \phi(\xx + s \nn_f) \right)
\\
\media{\phi}_f(\xx) &:= 
\frac{1}{2}\lim_{s \to 0^+} \left( \phi(\xx - s \nn_f) + \phi(\xx + s \nn_f) \right)
\end{aligned}
\]
and $\jump{\phi}_f(\xx) = \media{\phi}_f(\xx) = \phi(\xx)$ on $f \in \EdgesB$.
Given a positive integer $m \in \N$ and a subset $S \subseteq \Omega_h$, we introduce the standard polynomial spaces:
\begin{itemize}
\item $\Pk_m(\omega)$ is the set of polynomials on $\omega$ of degree $\leq m$, with $\omega$ a generic set;
\item $\Pk_m(S) := \{q \in L^2\bigl(\cup_{E \in S}E \bigr) \quad \text{s.t.} \quad q|_{E} \in  \Pk_m(E) \quad \text{for all $E \in S$}\}$ is the set of piecewise discontinuous polynomials;
\item $\Pkc_m(S) := \Pk_m(S) \cap C^0\bigl(\cup_{E \in S}E \bigr)$ is the set of piecewise continuous polynomial.
\end{itemize}
Given $s \in \R^+$ and  $p \in [1,+\infty]$, we define the standard broken Sobolev spaces as:
\begin{itemize}
\item $W^s_p(\Omega_h) := \{\phi \in L^2(\Omega) \quad \text{s.t.} \quad \phi|_{E} \in  W^s_p(E) \quad \text{for all $E \in \Omega_h$}\}$,
\end{itemize}
equipped with  the usual broken norm 
$\Vert \cdot \Vert_{W^s_p(\Omega_h)}$ 
and seminorm 
$\vert \cdot \vert_{W^s_p(\Omega_h)}$. Similarly, we denote with $\HH^{\regb}(\bcurl,\Omega_h)$ the space of vectorial $\boldsymbol L^2(\Omega)$ functions $\HH$ such that $\HH_{|E}\in \HH^{\regb}(\bcurl,E)$ for every $E\in\Omega_h$, endowed with the corresponding broken norm.
Furthermore, we introduce the following space
\begin{itemize}
\item $\mathbb{O}_{k-1}(\Omega_h) := 
\Pkc_{k-1}(\Omega_h)$  for $k>1$, or 
$\mathbb{O}_{k-1}(\Omega_h) := \Pk_{0}(\widetilde{\Omega}_h)$  for $k=1$.
\end{itemize}
Fix a face $f\in\EdgesI$ shared by two elements $E^+$ and $E^-$.
For any $p \in [1,\infty]$, we will use the notation
\[
\| V\|_{L^{p}(f^\pm)} := \max 
\{ 
\| V\|_{L^{p}(f^+)}, 
\| V\|_{L^{p}(f^-)}
\} 
\]
to denote the $L^p$ norm of a function that is not continuous across $f$. Above $f^+$ (resp., $f^-$) is the face $f$ considered as part of the boundary $\partial E^+$ (resp., $\partial E^-$).
In the paper, the symbols $\gtrsim$ and $\lesssim$ denote inequalities that hold up to a constant depending solely on the order of the method $k$, the domain $\Omega$, and the regularity of the mesh $\Omega_h$.
This constant does not depend on the model parameters $\psm$, $\pss$, $\ns$, $\nm$, $\cc$, and $\TT$, nor on the loadings $\mathbf{f}$, $\GG$, or the solution $(\uu, p, \BB)$. \\
We conclude this section by mentioning some useful preliminary results.
\begin{lemma}[Trace inequality]
\label{lm:trace}
Under the mesh assumption \textbf{(MA1)}, for any $E \in \Omega_h$ and for any  function $v \in H^s(E)$ with $  \frac{1}{2} < s \leq 1$, it holds 
\[
\sum_{f \in \EdgesE}\|v\|^2_{f} \lesssim h_E^{-1}\|v\|^2_{E} + h_E^{2s-1}| v|^2_{s,E} \,.
\]
\end{lemma}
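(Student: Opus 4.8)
The plan is to reduce the estimate to a trace inequality on a fixed reference configuration and then track the dependence on $h_E$ through an affine scaling argument, the uniformity of all constants being guaranteed by the shape-regularity assumption \textbf{(MA1)}.

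First I would fix a reference tetrahedron $\widehat E$ of unit diameter and recall the fractional trace theorem: for $\frac12 < s \le 1$ the trace operator maps $H^s(\widehat E)$ continuously into $L^2(\partial\widehat E)$, so that
\[
\sum_{\widehat f \subset \partial\widehat E} \|\widehat v\|^2_{\widehat f} \lesssim \|\widehat v\|^2_{\widehat E} + |\widehat v|^2_{s,\widehat E}
\qquad \text{for all } \widehat v \in H^s(\widehat E),
\]
where $|\cdot|_{s,\widehat E}$ denotes the Gagliardo--Slobodeckij seminorm. This is the only non-elementary input: for $s=1$ it is the classical trace theorem, while for $\frac12<s<1$ it is the standard trace result for Sobolev--Slobodeckij spaces.

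Then I would introduce the affine map $F\colon \widehat E \to E$, $\xx = B\widehat\xx + b$, and use \textbf{(MA1)} to bound $\|B\|\lesssim h_E$, $\|B^{-1}\|\lesssim h_E^{-1}$ and $|\det B|\sim h_E^{3}$ uniformly over the mesh family. Setting $\widehat v := v\circ F$, the change of variables yields the scalings
\[
\|v\|^2_{E} \sim h_E^{3}\,\|\widehat v\|^2_{\widehat E},
\qquad
\|v\|^2_{f} \sim h_E^{2}\,\|\widehat v\|^2_{\widehat f},
\qquad
|v|^2_{s,E} \sim h_E^{3-2s}\,|\widehat v|^2_{s,\widehat E},
\]
where the last one follows by inserting $|\xx-\yy|\sim h_E|\widehat\xx-\widehat\yy|$ together with the Jacobian factor $|\det B|^2$ into the double integral defining the fractional seminorm. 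Substituting these into the reference inequality and summing over the (finitely many) faces of $E$ gives
\[
\sum_{f\in\EdgesE}\|v\|^2_{f} \sim h_E^{2}\sum_{\widehat f}\|\widehat v\|^2_{\widehat f}
\lesssim h_E^{2}\bigl(\|\widehat v\|^2_{\widehat E}+|\widehat v|^2_{s,\widehat E}\bigr)
\sim h_E^{-1}\|v\|^2_{E}+h_E^{2s-1}|v|^2_{s,E},
\]
which is precisely the claim.

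The main obstacle is uniformity: a single affine reference element cannot reproduce every physical element, so I must argue that the constant in the reference trace inequality does not deteriorate along the family. Here \textbf{(MA1)} is essential, since it confines each rescaled element $E/h_E$ to the class of domains of unit diameter that are star-shaped with respect to a ball of radius $\ge 1/c_{\rm M}$; on this class the trace constant can be bounded uniformly. The only genuinely delicate point is the fractional range $\frac12<s<1$, where one must verify that the Gagliardo seminorm scales as $h_E^{3-2s}$ and that the trace theorem is applied with the correct Sobolev--Slobodeckij seminorm; the integer endpoint $s=1$ is entirely routine.
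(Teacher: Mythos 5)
Your proof is correct. The paper states this lemma as a standard preliminary result and gives no proof of its own, so there is nothing to compare against; your scaling argument (reference-element fractional trace theorem plus affine change of variables, with \textbf{(MA1)} guaranteeing uniform control of $\|B\|\,\|B^{-1}\|$ and hence of all constants) is the canonical derivation, and your computed scalings $h_E^{3}$, $h_E^{2}$ and $h_E^{3-2s}$ for the volume, face and Gagliardo--Slobodeckij terms are the right ones. The only caveat worth recording is that the hidden constant necessarily depends on $s$ and degenerates as $s \to \frac{1}{2}^{+}$, which is harmless here since $s$ is fixed by the regularity assumptions wherever the lemma is invoked.
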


\begin{lemma}[Bramble-Hilbert]
\label{lm:bramble}
Under the mesh assumption \textbf{(MA1)}, let $m \in {\mathbb N}$.  We denote with
$\Pzerok{m} \colon L^2(\Omega_h) \to \Pk_m(\Omega_h)$ the  $L^2$-projection operator onto the space of piecewise polynomial functions. For any $E \in \Omega_h$ and for any  smooth enough function $\phi$ defined on $\Omega$, it holds 
\[
\Vert\phi - \Pzerok{m} \phi \Vert_{W^r_p(E)} \lesssim h_E^{s-r} \vert \phi \vert_{W^s_p(E)} 
\qquad  \text{$s,r \in \N$, $r \leq s \leq m+1$, $p \in [1, \infty]$.}
\]
\end{lemma}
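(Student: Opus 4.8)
The plan is to reduce the estimate to a single fixed reference tetrahedron by an affine change of variables, invoke the classical Bramble--Hilbert (Deny--Lions) lemma there, and then bookkeep the powers of $h_E$ produced by the scaling. Since $\Omega_h$ consists of tetrahedra, each element $E$ is the image of a fixed reference tetrahedron $\widehat E$ under an affine map $F_E(\widehat\xx) = B_E \widehat\xx + \boldsymbol{b}_E$. Assumption \textbf{(MA1)} guarantees that $\|B_E\| \lesssim h_E$, $\|B_E^{-1}\| \lesssim h_E^{-1}$ and $|\det B_E| \sim h_E^3$, so that all constants below can be chosen uniformly over the mesh family. Writing $\widehat\phi := \phi \circ F_E$, the standard affine scaling of Sobolev seminorms gives, for every integer $j \ge 0$,
\[
|\widehat\phi|_{W^j_p(\widehat E)} \lesssim h_E^{\,j - 3/p}\,|\phi|_{W^j_p(E)},
\qquad
|v|_{W^j_p(E)} \lesssim h_E^{\,3/p - j}\,|\widehat v|_{W^j_p(\widehat E)}.
\]

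The key observation is that the $L^2$-projection commutes with this pull-back: since $\det B_E$ is constant on $E$, the defining orthogonality $\int_E(\Pzerok{m}\phi - \phi)\,q = 0$ for all $q \in \Pk_m(E)$ transforms into the analogous orthogonality on $\widehat E$, and because affine maps preserve polynomial degree we obtain $\widehat{\Pzerok{m}\phi} = \widehat\Pi_m \widehat\phi$, where $\widehat\Pi_m$ denotes the $L^2(\widehat E)$-projection onto $\Pk_m(\widehat E)$. Hence it suffices to prove the estimate on $\widehat E$, where $h_E$ is replaced by $1$.

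On the reference element I would argue as follows. The operator $I - \widehat\Pi_m$ is bounded from $W^s_p(\widehat E)$ into $W^r_p(\widehat E)$ for $r \le s$: indeed $\widehat\Pi_m$ maps into the finite-dimensional space $\Pk_m(\widehat E)$, on which all norms are equivalent, and a short duality argument shows that $\widehat\Pi_m$ is bounded on $L^p(\widehat E)$ for every $p \in [1,\infty]$. Moreover, since $s \le m+1$ we have $s-1 \le m$, so $\Pk_{s-1}(\widehat E) \subseteq \Pk_m(\widehat E)$, and $\widehat\Pi_m$ reproduces every $q \in \Pk_{s-1}(\widehat E)$, whence $(I-\widehat\Pi_m)q = 0$. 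Therefore, for any such $q$,
\[
\|\widehat\phi - \widehat\Pi_m\widehat\phi\|_{W^r_p(\widehat E)}
= \|(I-\widehat\Pi_m)(\widehat\phi - q)\|_{W^r_p(\widehat E)}
\lesssim \|\widehat\phi - q\|_{W^s_p(\widehat E)},
\]
and taking the infimum over $q \in \Pk_{s-1}(\widehat E)$, the Deny--Lions form of the Bramble--Hilbert lemma yields $\|\widehat\phi - \widehat\Pi_m\widehat\phi\|_{W^r_p(\widehat E)} \lesssim |\widehat\phi|_{W^s_p(\widehat E)}$.

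It then remains to scale back. Applying the second scaling inequality for each $j \le r$, using $h_E \lesssim 1$ so that $h_E^{-j} \le h_E^{-r}$, and summing over $j$ gives $\|\phi - \Pzerok{m}\phi\|_{W^r_p(E)} \lesssim h_E^{3/p - r}\,\|\widehat\phi - \widehat\Pi_m\widehat\phi\|_{W^r_p(\widehat E)}$; combining this with the reference estimate and then the first scaling inequality applied to $|\widehat\phi|_{W^s_p(\widehat E)}$ produces the two factors $h_E^{3/p - r}$ and $h_E^{s - 3/p}$, whose product is exactly $h_E^{s-r}$. The main obstacle, and the only place where \textbf{(MA1)} is genuinely used, is ensuring that all hidden constants --- those in the affine scaling, in the norm equivalence on $\Pk_m(\widehat E)$, and in the Deny--Lions inequality --- are independent of $E$; this is precisely what the reduction to a single reference tetrahedron with $\|B_E\|\,\|B_E^{-1}\| \lesssim 1$ provides, a bound equivalent to shape regularity. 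The tracking of the $3/p$ exponents, which must cancel, is routine but is the easiest point at which to introduce a sign error.
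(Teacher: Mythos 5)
Your argument is correct: the paper states this lemma as a classical preliminary result without proof, and your affine-scaling reduction to the reference tetrahedron combined with the Deny--Lions lemma (using that $\widehat\Pi_m$ reproduces $\Pk_{s-1}(\widehat E)$ since $s-1\leq m$, and that the $L^2$-projection commutes with the affine pull-back because $\det B_E$ is constant) is exactly the standard argument that underlies it. The exponent bookkeeping, including the cancellation of the $3/p$ factors, checks out.
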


\begin{lemma}[Inverse estimate]
\label{lm:inverse}
Under the mesh assumption \textbf{(MA1)}, let $m \in {\mathbb N}$. 
Then for any $E \in \Omega_h$ and for any $p_m \in \Pk_m(E)$ it holds 
\[
\|p_m\|_{W^s_p(E)} \lesssim h_E^{-s} \|p_m\|_{L^p(E)} 
\]
where the involved constant only depends on $m,s,p,c_{\rm{M}}$.
\end{lemma}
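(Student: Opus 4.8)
The plan is to reduce the estimate to a fixed reference configuration by an affine change of variables, exploit that on a finite–dimensional space all norms are equivalent, and then scale back, keeping track of the powers of $h_E$ by means of the shape–regularity assumption \textbf{(MA1)}. Since every tetrahedron is the affine image of a single fixed reference tetrahedron $\widehat E$, I would write $E = F_E(\widehat E)$ with $F_E(\widehat\xx) = B_E\,\widehat\xx + \boldsymbol b_E$, and associate to every scalar function $v$ on $E$ its pullback $\widehat v := v \circ F_E$, which maps $\Pk_m(E)$ bijectively onto $\Pk_m(\widehat E)$. The vector-valued case then follows componentwise.

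First I would record the geometric estimates implied by \textbf{(MA1)}. Because $E$ is star–shaped with respect to a ball of radius $\rho_E$ and has diameter $h_E$ with $h_E \leq c_{\rm{M}}\rho_E$, the classical bounds give $\|B_E\| \lesssim h_E$, $\|B_E^{-1}\| \lesssim \rho_E^{-1} \lesssim h_E^{-1}$ and $|\det B_E| \approx h_E^{3}$, where the hidden constants depend only on $c_{\rm{M}}$. Next I would record how norms transform under the pullback: the change of variables formula yields $\|\widehat v\|_{L^p(\widehat E)} = |\det B_E|^{-1/p}\|v\|_{L^p(E)}$ (with the usual modification for $p=\infty$), while the chain rule combined with the control on $B_E^{-1}$ gives, for each integer $0 \le j \le s$, the seminorm estimate $|v|_{W^j_p(E)} \lesssim |\det B_E|^{1/p}\,\|B_E^{-1}\|^{j}\,|\widehat v|_{W^j_p(\widehat E)}$.

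Applying these ingredients to $v = p_m$, so that $\widehat v = \widehat p_m \in \Pk_m(\widehat E)$, is then essentially mechanical. On the fixed finite–dimensional space $\Pk_m(\widehat E)$ the $W^s_p(\widehat E)$ norm and the $L^p(\widehat E)$ norm are equivalent, with an equivalence constant depending only on $m,s,p$; hence $|\widehat p_m|_{W^j_p(\widehat E)} \le \|\widehat p_m\|_{W^s_p(\widehat E)} \lesssim \|\widehat p_m\|_{L^p(\widehat E)}$. Chaining the three relations just recorded, the factors $|\det B_E|^{\pm 1/p}$ cancel exactly, and one obtains
\[
|p_m|_{W^j_p(E)} \lesssim |\det B_E|^{1/p}\,\|B_E^{-1}\|^{j}\,\|\widehat p_m\|_{L^p(\widehat E)} = \|B_E^{-1}\|^{j}\,\|p_m\|_{L^p(E)} \lesssim h_E^{-j}\,\|p_m\|_{L^p(E)} .
\]
Summing over $0 \le j \le s$ and bounding each lower-order factor $h_E^{-j}$ by $h_E^{-s}$ — legitimate once $h_E \le 1$, which holds for sufficiently fine meshes — yields the claim.

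The conceptual heart of the argument, the finite–dimensional norm equivalence on $\Pk_m(\widehat E)$, is essentially free. The one point requiring genuine care, and what I expect to be the main obstacle, is checking that the constant is truly uniform over the whole mesh family, that is, that it depends only on $m,s,p$ and on the shape–regularity constant $c_{\rm{M}}$ and not on the individual element. This is precisely what the reference–element scaling delivers: the geometry of $E$ enters only through $B_E$, whose norm, inverse norm and determinant are all controlled by $h_E$ and $c_{\rm{M}}$ via \textbf{(MA1)}, so no dependence on the particular shape of $E$ can survive.
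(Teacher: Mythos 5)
Your proof is correct and is the standard scaling argument (affine pullback to a reference tetrahedron, finite-dimensional norm equivalence there, and transformation of norms controlled via \textbf{(MA1)}); the paper itself states this lemma without proof precisely because this is the classical route. Your closing remark about absorbing the lower-order factors $h_E^{-j}$ into $h_E^{-s}$ is handled correctly, since $h_E$ is bounded by the diameter of $\Omega$ and the paper's convention allows hidden constants to depend on $\Omega$.
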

We consider an interpolation operator that maps sufficiently smooth piecewise discontinuous functions into the space of more regular functions. This interpolation operator is commonly referred to in the literature as an averaging operator \cite{burman:2004, EG:2017, BDV:2024}. 
Specifically, our goal is to take a piecewise polynomial $p$ of degree $k-1$ and return a piecewise polynomial of the same degree that belongs to $\mathbb{O}_{k-1}(\Omega_h)$. The proof of the following proposition can be found in \cite{BDV:2024}.
\begin{proposition} \label{prp:oswald}
Let Assumption \textbf{(MA1)} hold. Furthermore, if $k=1$ let also Assumption \textbf{(MA2)} hold. Then, it exists a projection operator $\intcip \colon \Pk_{k-1}(\Omega_h) \to \mathbb{O}_{k-1}(\Omega_h)$
such that for any $p_{k-1} \in \Pk_{k-1}(\Omega_h)$   the following holds:
\[
\sum_{E \in \Omega_h}h_E^2  \Vert (I- \intcip) p_{k-1} \Vert_E^2 \lesssim 
\sum_{f \in \EdgesI}
h_f^3 \Vert \jump{p_{k-1}}_f\Vert_f^2
\, .
\]
Furthermore, using a triangular inequality combined with a trace inequality, and the regularity of the mesh, it holds
\[
\sum_{E \in \Omega_h}h_E^2  \Vert (I- \intcip) p_{k-1} \Vert_E^2 \lesssim 
\sum_{E \in \Omega_h}
h_E^2 \Vert p_{k-1}\Vert_E^2
\, .
\]
\end{proposition}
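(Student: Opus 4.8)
The plan is to construct $\intcip$ explicitly as an averaging (Oswald-type) operator and to prove the bound by a local scaling argument that converts nodal differences into face jumps, treating the two definitions of $\mathbb{O}_{k-1}(\Omega_h)$ separately. For $k>1$ the target is $\Pkc_{k-1}(\Omega_h)$, so I would fix the Lagrange nodes of degree $k-1$ on $\Omega_h$ and let $\intcip p_{k-1}$ be the unique continuous piecewise polynomial whose value at each node $\zeta$ is the arithmetic average of the traces $p_{k-1}|_E(\zeta)$ over the elements $E$ sharing $\zeta$. For $k=1$ the target is $\Pk_0(\widetilde{\Omega}_h)$: here $p_0$ is elementwise constant, and I set $\intcip p_0$ equal, on each macroelement $M\in\widetilde{\Omega}_h$, to the $L^2(M)$-average $\bar p_M := |M|^{-1}\int_M p_0$. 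In both cases $\intcip$ is a genuine projection, since it fixes globally continuous (resp.\ macro-constant) inputs.

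The core is a local estimate on a single element. By affine scaling to a reference tetrahedron and equivalence of norms on the finite-dimensional space $\Pk_{k-1}$, for any $q\in\Pk_{k-1}(E)$ one has $\|q\|_E^2 \sim h_E^3\sum_{\zeta}|q(\zeta)|^2$ with $\zeta$ the Lagrange nodes of $E$, and analogously $\|r\|_f^2\sim h_f^2\sum_\zeta|r(\zeta)|^2$ on a face. Applying this to $q=(I-\intcip)p_{k-1}$, a node interior to $E$ contributes nothing, while a shared node $\zeta$ contributes $p_{k-1}|_E(\zeta)-\bar p(\zeta)$, an average of the differences $p_{k-1}|_E(\zeta)-p_{k-1}|_{E'}(\zeta)$ over the boundedly many $E'$ in the patch $\omega_\zeta$. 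Each such difference telescopes, along a face-connected chain in $\omega_\zeta$ of length controlled by $c_{\rm M}$, into single-face jump evaluations $\jump{p_{k-1}}_f(\zeta)$. Using the face norm equivalence $|\jump{p_{k-1}}_f(\zeta)|^2\lesssim h_f^{-2}\|\jump{p_{k-1}}_f\|_f^2$ and the local quasi-uniformity $h_E\sim h_f$ from \textbf{(MA1)}, I obtain
\[
h_E^2\|(I-\intcip)p_{k-1}\|_E^2
\lesssim h_E^5\!\!\sum_{\zeta\in\partial E}\!\bigl|\,p_{k-1}|_E(\zeta)-\bar p(\zeta)\,\bigr|^2
\lesssim \!\!\sum_{\substack{f\in\EdgesI\\ f\subset\omega_E}}\!\! h_f^3\,\|\jump{p_{k-1}}_f\|_f^2,
\]
where $\omega_E$ denotes the union of the patches of the nodes of $E$. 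Summing over $E\in\Omega_h$ and noting that each interior face appears in boundedly many such patches yields the first claimed bound.

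For $k=1$ the same scheme applies with $\bar p(\zeta)$ replaced by $\bar p_M$: on $E\subset M$ one has $(I-\intcip)p_0|_E=p_0|_E-\bar p_M$, a constant, and since $\bar p_M$ is a convex combination of the constants $p_0|_{E'}$ over $E'\subset M$, the difference is a bounded combination of element-to-element differences inside $M$. Here Assumption \textbf{(MA2)} is essential: it guarantees that $M$ is a connected agglomeration of boundedly many elements containing a full vertex star $\omega_\zeta$, so that every $E\subset M$ is joined to every other by a short face-connected path along which the differences telescope into interior-face jumps $\jump{p_0}_f$; the same scaling bookkeeping then gives the estimate. Finally, the ``furthermore'' bound follows at once: bounding $\|\jump{p_{k-1}}_f\|_f^2\lesssim \|p_{k-1}\|_{f^+}^2+\|p_{k-1}\|_{f^-}^2$, applying the polynomial trace/inverse inequality $\|p_{k-1}\|_f^2\lesssim h_E^{-1}\|p_{k-1}\|_E^2$, and using $h_f\sim h_E$ converts each $h_f^3\|\jump{p_{k-1}}_f\|_f^2$ into $h_E^2\|p_{k-1}\|_E^2$ on the two neighbouring elements.

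I expect the delicate point to be the telescoping/patch argument, namely making precise that nodal (or element-constant) differences reduce to a uniformly bounded number of single-face jump contributions. This rests on the face-connectivity and bounded cardinality of the patches $\omega_\zeta$ from \textbf{(MA1)} and, in the critical lowest-order case $k=1$, of the macroelements $M$ from \textbf{(MA2)}; without the star condition (ii) in \textbf{(MA2)}, the macro-averaging need not be controlled by interior jumps, and the estimate would fail.
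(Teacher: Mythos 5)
Your construction and the scaling/telescoping argument are correct and coincide with the standard Oswald-averaging proof; the paper itself does not reprove this result but explicitly defers to \cite{BDV:2024}, where essentially this same argument is carried out. One small remark: for the $k=1$ case the telescoping inside a macroelement $M$ only uses the connectivity and bounded cardinality from part (i) of \textbf{(MA2)}, while the vertex-star condition (ii) is needed elsewhere in the analysis (for the construction of the test function $\hat\HH_h$ in the inf-sup argument), so attributing the potential failure of this particular estimate to the absence of (ii) slightly misplaces the role of that condition.
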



\section{The stabilized finite element method}
\label{sec:discrete}
\subsection{Discrete spaces}
Given a positive integer $k$ that represents the order of the method, we introduce the following polynomial spaces
\begin{equation}
\label{eq:spazi_d}
\begin{aligned}
\VVd := [\Pk_k(\Omega_h)]^3 \cap &\HH_0(\diver,\Omega) \, , \quad  
\WWd := [\Pk_k(\Omega_h)]^3 \cap \WWc \, , \\
&\Qd  := \Pk_{k-1}(\Omega_h) \cap Q \, .
\end{aligned}
\end{equation}
For the discrete velocity field $\VVd$, we select standard $\HH(\diver)$-conforming Brezzi-Douglas-Marini $\boldsymbol{{\rm BDM}}_k$ elements, while for the magnetic induction we select Nédéléc elements of the second kind.
We introduce the discrete kernel space
\[
\ZZd := \{ \vv_h \in \VVd \quad \text{s.t.} \quad \diver \, \vv_h = 0  \}\, ,
\]
and thanks to the choice of the $\boldsymbol{{\rm BDM}}_k$ elements that maintain the pressure robustness of the method, we have the inclusion $\ZZd \subseteq \ZZc$.
We have to introduce two interpolation operators. The first one maps functions in $\HH^1(\Omega)$ into the space $\VVd$, mapping the continuous kernel into the discrete kernel, see for instance \cite{boffi-brezzi-fortin:book,ernguer-book-1}. 
\begin{lemma}[Interpolation operator on $\VVd$]
\label{lm:int-v}
Under the Assumption \textbf{(MA1)} let $\PVVd \colon \VVc \to \VVd$ be the standard degree-of-freedom interpolation operator defined in the BDM space.
Then: 

\noindent
$(i)$ if $\vv \in \ZZc$ then $\vvi \in \ZZd$;

\noindent
$(ii)$ for any $\vv \in \ZZc$
\begin{equation}
\label{eq:orth-v}
\left(\vv - \, \vvi, \,  \pp_{k-1} \right) = 0 \quad \text{for all $\pp_{k-1} \in [\Pk_{k-1}(\Omega_h)]^3$;}
\end{equation}

\noindent
$(iii)$ for any $\vv \in \VVc \cap \HH^{s+1}(\Omega_h)$, with $0 \leq s \leq k$, for all $E \in \Omega_h$, it holds
\begin{equation}
\label{eq:int-v}
\vert \vv - \vvi \vert_{m,E} \lesssim h_E^{s+1-m} \vert \vv \vert_{s+1,E} 
\qquad \text{for $0\leq m\leq s+1$.}
\end{equation}
\end{lemma}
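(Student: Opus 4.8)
The three statements are classical properties of the $\boldsymbol{{\rm BDM}}_k$ interpolant, and I would organize the proof around the functionals defining $\PVVd$: the face moments $\int_f (\vv\cdot\nn)\,q\,ds$ against $q\in\Pk_k(f)$ for every face, and the interior moments $\int_E \vv\cdot\boldsymbol{z}\,dx$ against $\boldsymbol{z}$ in the first-kind N\'ed\'elec space $\boldsymbol{\mathcal{N}}_{k-1}(E)$ (which satisfies $[\Pk_{k-2}(E)]^3\subseteq\boldsymbol{\mathcal{N}}_{k-1}(E)\subseteq[\Pk_{k-1}(E)]^3$); see \cite{boffi-brezzi-fortin:book,ernguer-book-1}. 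Since these functionals are local and single-valued across internal faces, $\PVVd\vv$ is automatically $\HH(\diver,\Omega)$-conforming, and on a boundary face the moments of $\vv\cdot\nn$ vanish because $\vv\in\VVc=\Hunozero$; hence $\PVVd\vv\in\VVd$ as required.

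For $(i)$ I would invoke the commuting-diagram property $\diver(\PVVd\vv)=\Pzerok{k-1}(\diver\vv)$, where $\Pzerok{k-1}$ is the $L^2$-projection onto $\Pk_{k-1}(\Omega_h)$. If $\vv\in\ZZc$ then $\diver\vv=0$, so $\diver(\vvi)=\Pzerok{k-1}(0)=0$ and $\vvi\in\ZZd$. The commuting identity itself is obtained by testing $\diver(\vv-\vvi)\in\Pk_{k-1}(E)$ against an arbitrary $\psi\in\Pk_{k-1}(E)$, integrating by parts on $E$, and using that the face moments against $\psi|_f\in\Pk_k(f)$ and the interior moments against $\bnabla\psi\in\boldsymbol{\mathcal{N}}_{k-1}(E)$ are matched by the definition of $\PVVd$.

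The core of the argument is $(ii)$, and the plan is to reduce it to $(i)$ together with the defining moments. Since $\Pk_{k-1}(\Omega_h)$ is a broken space and $\PVVd$ acts element by element, it suffices to show $\int_E(\vv-\vvi)\cdot\pp_{k-1}\,dx=0$ for each $E$ and each $\pp_{k-1}\in[\Pk_{k-1}(E)]^3$. The key algebraic fact I would establish is the (non-direct) splitting $[\Pk_{k-1}(E)]^3=\bnabla\Pk_k(E)+\boldsymbol{\mathcal{N}}_{k-1}(E)$, which follows by summing over $0\le r\le k-1$ the homogeneous decomposition $[\widetilde{\Pk}_r]^3=\boldsymbol{\mathcal{S}}_r\oplus\bnabla\widetilde{\Pk}_{r+1}$, with $\boldsymbol{\mathcal{S}}_r:=\{\pp\in[\widetilde{\Pk}_r]^3:\xx\cdot\pp=0\}\subseteq\boldsymbol{\mathcal{N}}_{k-1}(E)$. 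Writing $\pp_{k-1}=\bnabla\psi+\boldsymbol{z}$ with $\psi\in\Pk_k(E)$ and $\boldsymbol{z}\in\boldsymbol{\mathcal{N}}_{k-1}(E)$, the gradient part is treated by integration by parts,
\[
\int_E(\vv-\vvi)\cdot\bnabla\psi\,dx=-\int_E \diver(\vv-\vvi)\,\psi\,dx+\sum_{f\in\EdgesE}\int_f(\vv-\vvi)\cdot\nn\,\psi\,ds,
\]
where the volume integral vanishes by $(i)$ (both divergences are zero) and every face integral vanishes because $\psi|_f\in\Pk_k(f)$ while the face moments of $\vv-\vvi$ are zero. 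The remaining term $\int_E(\vv-\vvi)\cdot\boldsymbol{z}\,dx$ vanishes directly, as $\boldsymbol{z}$ lies in the interior moment space. This yields the claimed orthogonality.

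Finally, for $(iii)$ I would follow the classical route: pull $\vv$ back to a reference tetrahedron via the contravariant Piola transform, exploit that $\PVVd$ commutes with it and reproduces $[\Pk_k]^3$ exactly, apply the Bramble--Hilbert lemma on the reference element, and push forward, controlling the geometric factors through the shape-regularity assumption \textbf{(MA1)}; this gives $\vert\vv-\vvi\vert_{m,E}\lesssim h_E^{s+1-m}\vert\vv\vert_{s+1,E}$ for $0\le s\le k$ and $0\le m\le s+1$. I expect the only genuinely delicate point to be in $(ii)$: verifying that the sum $\bnabla\Pk_k(E)+\boldsymbol{\mathcal{N}}_{k-1}(E)$ truly exhausts $[\Pk_{k-1}(E)]^3$, so that no moment of $\pp_{k-1}$ is left uncontrolled. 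A dimension count against the total number of $\boldsymbol{{\rm BDM}}_k$ degrees of freedom provides the cleanest confirmation.
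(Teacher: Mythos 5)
The paper does not prove this lemma at all: it is stated as a classical property of the $\boldsymbol{{\rm BDM}}_k$ degree-of-freedom interpolant, with the proof deferred to the cited references. Your argument is correct and is precisely the standard one found there — the commuting-diagram identity $\diver(\PVVd\vv)=\Pzerok{k-1}(\diver\vv)$ for $(i)$, the splitting $[\Pk_{k-1}(E)]^3=\bnabla\Pk_k(E)+\boldsymbol{\mathcal{N}}_{k-1}(E)$ combined with integration by parts and the face/interior moments for $(ii)$, and the Piola--Bramble--Hilbert scaling argument under \textbf{(MA1)} for $(iii)$.
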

The second interpolation concerns the spaces $\WWc$, see for instance \cite{AV:1999}. 
A notable property is that this interpolant also approximates the $\bcurl$ of the function.
\begin{lemma}[Interpolation operator on $\WWd$]\label{lm:int-w}
Under the Assumption \textbf{(MA1)} let $\PWWd \colon \WWc \to \WWd$ be the interpolation operator of \cite{AV:1999}. 
If the function satisfies also $\HH \in \HH^r(\bcurl, \Omega)$ for $\frac{1}{2} < r \leq k+1$, the following estimates hold
\[
\begin{aligned}
&| \HH - \PWWd \HH |_{m,E} \lesssim h_E^{r-m} | \HH |_{\HH^r(\bcurl, E)} \quad &\text{for $0\leq m\leq r$}\, ,  \\
&| \bcurl(\HH - \PWWd \HH) |_{m,E} \lesssim h_E^{\tilde{r}-m}| \bcurl(\HH) |_{\tilde{r}, E} \quad &\text{for $0\leq m\leq \tilde r$}\, ,
\end{aligned}
\]
where $\tilde r := \min \{ r,k \}$.
\end{lemma}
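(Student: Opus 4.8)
The plan is to prove the two estimates separately by the standard device of scaling to a fixed reference element, combined with the approximation and commuting properties of the operator of \cite{AV:1999}, which, unlike the classical N\'ed\'elec interpolant, remains bounded under the mild regularity hypothesis $r > \frac{1}{2}$.

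First I would fix an element $E \in \Omega_h$ and map it to a reference tetrahedron $\hat E$ through the affine map $F_E$, transporting the field $\HH$ by the covariant (curl-conforming) Piola transformation $\hat\HH = (DF_E)^{\top}(\HH \circ F_E)$. Two facts are crucial. First, by the construction of \cite{AV:1999}, the reference interpolant $\hat\Pi$ is a projection onto $[\Pk_k(\hat E)]^3$ that reproduces the whole second-kind N\'ed\'elec space and is bounded on $\HH^r(\bcurl, \hat E)$ for every $r > \frac{1}{2}$; the threshold $\frac{1}{2}$ is exactly what makes the tangential-trace degrees of freedom well defined for such low-regularity fields. Second, since $\hat\Pi$ reproduces polynomials of degree $\le k$, a Deny--Lions/Bramble--Hilbert argument (cf.\ Lemma \ref{lm:bramble}) gives $|\hat\HH - \hat\Pi\hat\HH|_{m,\hat E} \lesssim |\hat\HH|_{\HH^r(\bcurl,\hat E)}$ for $0 \le m \le r$. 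Undoing the Piola transform and tracking the Jacobian powers, which under \textbf{(MA1)} are controlled by $h_E$, yields the first estimate $|\HH - \PWWd\HH|_{m,E} \lesssim h_E^{r-m}|\HH|_{\HH^r(\bcurl, E)}$. Here the hypothesis $r \le k+1$ guarantees that the polynomial reproduction of degree $k$ is never the limiting factor, so no truncation of the rate appears.

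For the second estimate I would exploit the defining commuting property of the operator, namely $\bcurl(\PWWd\HH) = \mathcal{R}(\bcurl\HH)$, where $\mathcal{R}$ is the associated $\HH(\diver)$-conforming interpolation onto $\bcurl(\WWd) \subseteq [\Pk_{k-1}(\Omega_h)]^3$. Consequently $\bcurl(\HH - \PWWd\HH) = (I - \mathcal{R})(\bcurl\HH)$, and the problem reduces to an interpolation estimate for $\mathcal{R}$ applied to $\bcurl\HH \in \HH^r(\Omega_h)$. Since $\mathcal{R}$ reproduces only polynomials of degree $\le k-1$, the Bramble--Hilbert step now saturates at order $k$; this is precisely the origin of the reduced rate $\tilde r = \min\{r,k\}$. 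Repeating the scaling argument (now with the contravariant Piola transform appropriate to $\HH(\diver)$) gives $|\bcurl(\HH - \PWWd\HH)|_{m,E} \lesssim h_E^{\tilde r - m}|\bcurl(\HH)|_{\tilde r, E}$.

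The main obstacle is the low-regularity regime $\frac{1}{2} < r \le 1$, in which the classical N\'ed\'elec interpolant is not even defined, since its face and edge moments require more smoothness than $\HH^r(\bcurl)$ provides. The whole point of using the operator of \cite{AV:1999} is to circumvent this through a local regularization that simultaneously preserves boundedness on $\HH^r(\bcurl)$ and the commuting diagram with $\mathcal{R}$. I would therefore lean on those two structural results as the technical core, so that the remainder of the argument is the routine scaling and Bramble--Hilbert machinery already recalled in Lemmas \ref{lm:trace} and \ref{lm:bramble}.
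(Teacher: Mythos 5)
The paper does not actually prove this lemma: it is imported wholesale from the cited reference (the operator ``of \cite{AV:1999}''), so there is no in-paper argument to compare yours against. Your sketch is the standard reconstruction — covariant Piola scaling to a reference tetrahedron, boundedness of the reference interpolant on $\HH^r(\bcurl,\hat E)$ for $r>\frac12$, polynomial invariance of degree $k$ plus Deny--Lions for the first bound, and the commuting diagram $\bcurl\circ\PWWd=\mathcal{R}\circ\bcurl$ with an $\HH(\diver)$-conforming interpolant reproducing only $[\Pk_{k-1}]^3$ for the second, which correctly explains the truncation $\tilde r=\min\{r,k\}$. This is sound in structure, and it is legitimate that the two ``structural facts'' you lean on (low-regularity boundedness and the commuting property) are themselves taken from the reference, since the paper does exactly the same.

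Two caveats. First, your framing of the obstacle is slightly off: the content of the cited result is precisely that the \emph{canonical} degree-of-freedom interpolant \emph{is} well defined and bounded on $\HH^r(\bcurl,E)$ for $r>\frac12$ — the tangential edge/face moments become meaningful once one also controls $\bcurl\HH$ — so no ``local regularization'' is involved, and describing the classical interpolant as undefined in this range overstates the difficulty. Second, in the scaling step you should track that the two components of $|\hat\HH|_{\HH^r(\bcurl,\hat E)}$ transform with Jacobian powers differing by one factor of $h_E$ (the covariant transform scales $|\hat\HH|_{m,\hat E}^2\sim h_E^{2m-1}|\HH|_{m,E}^2$ but $|\bcurl\hat\HH|_{m,\hat E}^2\sim h_E^{2m+1}|\bcurl\HH|_{m,E}^2$); the mismatch is favorable for $h_E\lesssim 1$, so the stated bound survives, but the claim that one simply ``undoes the Piola transform'' hides a step worth writing out. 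Neither point invalidates the argument.
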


\subsection{Discrete problem}

From now on, we assume that the advective fields satisfy the following regularity requirements, in addition to the minimal conditions detailed at the beginning of Section \ref{sec:model}:
\begin{itemize}
\item the advective velocity field $\cc \in \boldsymbol{W}^1_ \infty(\Omega_{h0})  \cap \HH_0(\diver, \Omega)$, 
\item the advective magnetic induction $\TT \in \boldsymbol{W}^2_ \infty(\Omega_{h0})  \cap {\bf W}$,
\end{itemize}
where $\Omega_{h0}$ is a coarse mesh such that all the meshes in the family $\{ \Omega_h \}_h$ are a refinement of $\Omega_{h0}$. Note that our assumption is purposefully more general than assuming $\cc \in \boldsymbol{W}^1_ \infty(\Omega)$, $\TT \in \boldsymbol{W}^2_ \infty(\Omega)$ since it allows also for advective fields in the corresponding discrete spaces ($\VVd$ and $\WWd$, respectively). This is compatible with the situation when the linearized problem is thought of as originated by a fixed point iteration of the fully nonlinear magnetohydrodynamic equations. Our condition is set on $\Omega_{h0}$, and not on $\Omega_h$, simply because our data $\cc,\TT$ are given and should not depend on the mesh parameter $h$.

In addition, before introducing the discrete problem, we preliminary make the following assumption on the  solution $\uu$ of problem \eqref{eq:linear variazionale}.

\smallskip
\noindent
\textbf{(RA1) Regularity assumption for the consistency:}

\noindent
Let $\uu \in \ZZc$ be the velocity solution  of Problem \eqref{eq:linear variazionale}, then $\uu$ belongs to $\HH^{r}(\Omega)$ for some $r> 3/2$. \medskip

We consider the following discrete problem 
\begin{equation}
\label{eq:linear fem}
\left\{
\begin{aligned}
& \text{find $(\uu_h, p_h, \BB_h) \in \VVd \times \Qd \times \WWd$,  such that} 
\\
&\begin{aligned}
\pss \,(\uu_h, \vv_h) + \ns \, \ash(\uu_h, \vv_h) + c_h(\uu_h, \vv_h) + &
\\ 
-d(\BB_h, \vv_h)   + J_h(\uu_h, \vv_h) + b(\vv_h, p_h) 
&= 
(\ff, \vv_h)  
&\,\,\, & \text{for all $\vv_h \in \VVd$,} 
\\
b(\uu_h, q_h) &= 0 
&\,\,\, & \text{for all $q_h \in \Qd$,}
\\
\psm \,(\BB_h, \HH_h) + \nm \,\am(\BB_h, \HH_h) + d(\HH_h, \uu_h) 
&= 
(\GG, \HH_h)  
&\,\,\, & \text{for all $\HH_h \in \WWd$,}
\end{aligned}
\end{aligned}
\right.
\end{equation}
where the discrete bilinear forms are defined as:
\begin{equation}
\label{eq:forme_d}
\begin{aligned}
\ash(\uu_h,  \vv_h) &:=  (\beps_h(\uu_h) ,\, \beps_h(\vv_h))
- \sum_{f \in \Edges} (\media{\beps_h(\uu_h)\nn_f}_f ,\, \jump{\vv_h}_f)_f  +
\\
&- \sum_{f \in \Edges} (\jump{\uu_h}_f ,\, \media{\beps_h(\vv_h) \nn_f}_f)_f 
+ 
\bdma \sum_{f \in \Edges} h_f^{-1} (\jump{\uu_h}_f ,\,\jump{\vv_h}_f)_f \, ,
\\
c_h(\uu_h, \vv_h) &:=  (( \bnabla_h \uu_h ) \, \cc, \, \vv_h )
- \sum_{f \in \EdgesI} ( (\cc \cdot \nn_f) \jump{\uu_h}_f ,\, \media{\vv_h}_f)_f +
\\
& + 
\bdmc \sum_{f \in \EdgesI} (\vert \cc \cdot \nn_f \vert \jump{\uu_h}_f, \, \jump{\vv_h}_f )_f \, .
\end{aligned}
\end{equation}
To stabilize the method and obtain feasible numerical solutions in the hyperbolic limit, we include the following jump penalization term in the formulation of the discrete problem
\begin{equation}
\label{eq:J}
\begin{aligned}
J_h(\uu_h, \vv_h) &:=   \sum_{f \in \Edges} \bdmJ  \bigl(
\jump{\TT \times \uu_h}_f , \jump{\TT \times \vv_h}_f \bigr)_f \\
& \qquad +
\sum_{f \in \EdgesI} \bdmJJ h_f^2 
\bigl(\jump{\bcurl_h (\uu_h \times \TT)}_f  ,\jump{\bcurl_h (\vv_h \times \TT)}_f 
\bigr)_f \, ,
\end{aligned}
\end{equation}
where $\bdmJ$ and $\bdmJJ$ are two parameters that will be fixed later. In contrast to the standard CIP term commonly found in the literature \cite{burman:2004, BDV:2024}, we emphasize that our approach considers the jump of the $\bcurl$ rather than the jump of the full gradient.
\begin{remark}
It is possible to consider the following alternative $J_h(\cdot,\cdot)$:
\[
\begin{aligned}
J_h(\uu_h, \vv_h) &:=  \sum_{f \in \Edges} 
\Vert \TT\Vert_{\boldsymbol{L}^\infty(f^\pm)}^2 
 \bdmJ (\jump{\uu_h}_f ,  \jump{\vv_h}_f)_f \\
&\qquad + 
\sum_{f \in \EdgesI}
\Vert \TT\Vert_{\WW^1_\infty(f^\pm)}^2
\bdmJJ  h_f^2 (\jump{\bnabla_h \uu_h}_f  , \jump{\bnabla_h \vv_h}_f )_f \, .
\end{aligned}
\]
\end{remark}
Finally, we define the bilinear form
\begin{equation}
\label{eq:astab}
\begin{aligned}
\astab(\uu_h, \BB, \vv_h, \HH_h) := &
\pss (\uu_h, \vv_h) + \psm (\BB, \HH_h) + 
\ns \ash(\uu_h, \vv_h) + \nm \am(\BB, \HH_h) 
\\  
& 
+ c_h(\uu_h, \vv_h) -d(\BB, \vv_h)   + d(\HH_h, \uu_h) + J_h(\uu_h, \vv_h) \,.
\end{aligned}
\end{equation}
Problem \eqref{eq:linear fem} can be written also in the following pressure-independent form
\begin{equation}
\label{eq:Astabprob}
\left\{
\begin{aligned}
& \text{find $(\uu_h,\BB_h) \in \ZZd  \times \WWd$,  such that} 
\\
&\astab(\uu_h, \BB_h, \vv_h, \HH_h) = (\ff, \vv_h) + (\GG, \HH_h) \quad   \text{for all $\vv_h \in \ZZd$} , \, \text{for all $\HH_h \in \WWd$} \, .
\end{aligned}
\right.
\end{equation}

\begin{remark}
All the forms above are intended to be extendable to any sufficiently regular function. In particular, if the solution $\uu$ of the continuous problem \eqref{eq:linear variazionale} satisfies (RA1), the following consistency property holds
\begin{equation}\label{eq:consistency}
\astab(\uu-\uu_h, \BB-\BB_h, \vv_h, \HH_h) = 0 \quad   \text{for all $\vv_h \in \ZZd$} , \, \text{for all $\HH_h \in \WWd$} \, .
\end{equation}
\end{remark}

\begin{remark}\label{rem:Bh:kernel}
Exploiting the third equation in \eqref{eq:linear fem} and choosing $\HH_h$ as the gradient of a polynomial $p \in \Pkc_{k+1}(\Omega_h)$, we obtain that
\[
(\BB_h, \nabla p) = 0 \quad \forall p \in \Pkc_{k+1}(\Omega_h) \, ,
\]
which is the classical discrete divergence-free condition for Nédéléc elements.
\end{remark}

\section{Theoretical analysis}
\label{sec:theoretical}
\subsection{Inf-sup condition}

We introduce the following norms and seminorms, which depend on the equation parameters and on the mesh:
\begin{equation}
\label{eq:norme}
\begin{aligned}
\normas{\uu}^2 &:= 
\pss \Vert \uu \Vert^2 + \ns \Vert \beps_h(\uu) \Vert^2 + 
\ns \, \bdma\!\sum_{f \in \Edges} h_f^{-1} \Vert \jump{\uu}_f \Vert_{f}^2 \, ,
\\
\normaupw{\uu}^2 &:= \bdmc\!\sum_{f \in \EdgesI}  \Vert \vert \cc \cdot \nn_f \vert^{1/2} \jump{\uu}_f \Vert_{f}^2 \, ,
\\
\normacip{\uu}^2 &:=
\bdmJ\sum_{f \in \Edges}  \Vert \jump{\TT \times \uu}_f \Vert_{f}^2 \, ,
+
\bdmJJ\sum_{f \in \EdgesI}  h_f^2 \Vert \jump{\bcurl_h(\uu\times \TT_h)}_f \Vert_{f}^2\\
\normacurl{\uu}^2 &:=
\gamma^{-1} \, \sum_{E \in \Omega_h} h_E^2 \, \| \bcurl_h(\uu \times \TT_h) \|^2_{E} \, .
\end{aligned}
\end{equation}
where the global parameter $\gamma := \max\{h, \nm\}$ (hence $\gamma^{-1} := \min\{h^{-1},\nm^{-1}\}$) and $\TT_h$ is the best piecewise constant approximation of $\TT$ in $L^2(\Omega)$.

\begin{remark}\label{rem:scaling}
We are aware that $h$ and $\nm$ have different physical dimensions, so that in principle the comparison $\gamma := \max\{h, \nm\}$ would require a suitable preliminary scaling of these quantities. However, it is common practice in the literature concerning finite element analysis of advection-diffusion-reaction problems to ignore this aspect and assume that the involved quantities are already properly scaled. Here, we follow this approach.
\end{remark}

The stability norms are obtained by summing these norms and seminorms
\begin{equation}
\label{eq:normeVW}
\begin{aligned}
\normastab{\uu}^2 &:= 
\normas{\uu}^2 + \normaupw{\uu}^2 + \normacip{\uu}^2 + \normacurl{\uu}^2 \,, \\
\normam{\BB}^2 &:= 
\psm \Vert \BB \Vert^2 + \nm \Vert \bcurl (\BB) \Vert^2 \,.
\end{aligned}
\end{equation}

\begin{remark}\label{rem:thetaequiv}
In the definitions of the seminorm $\normacurl{\cdot}$ and the second term in $\normacip{\cdot}$, we have considered the approximation $\TT_h$ instead of $\TT$.
This choice was made to simplify the theoretical analysis of the method.
 We emphasize that controlling the norms with $\TT_h$, along with the $\boldsymbol L^2$-norm of $\vv_h$, ensures the control of the corresponding norms with $\TT$ as we show briefly here below (and also the converse holds true). In fact, using triangular inequality, an inverse estimate and standard approximation results together with inverse estimates for polynomials, we obtain 
\[
\begin{aligned}
\| \bcurl_h(\vv_h \times \TT_h) \|^2_{E} 
&\leq 
\| \bcurl_h(\vv_h \times \TT) \|^2_{E} + \| \bcurl_h(\vv_h \times (\TT_h - \TT)) \|^2_{E}  \\
&\lesssim
\| \bcurl_h(\vv_h \times \TT) \|^2_{E} + | \vv_h |_{1,E}^2\|\TT_h - \TT \|^2_{\boldsymbol{L}^\infty(E)} \\
& \qquad+
\| \vv_h \|_E^2 | \TT_h - \TT |^2_{\WW^1_\infty(E)}\\
& \lesssim\| \bcurl_h(\vv_h \times \TT) \|^2_{E} 
+ 
|\TT|^2_{\WW^1_\infty(E)}\|\vv_h\|^2_{E} \, ,  \\
\end{aligned}
\]
for all $\vv_h \in \ZZd$ (but, obviously, it holds also for all $\vv_h \in \VVd$). For the seminorm $\normacip{\cdot}$, we have that
\[
\begin{aligned}
\sum_{f\in\EdgesI} h_f^2 \| \jump{\bcurl(\vv_h \times \TT_h) } \|^2_{f}
&\leq
\sum_{f\in\EdgesI} h_f^2 \| \jump{\bcurl(\vv_h \times \TT) } \|^2_{f} \\
&\qquad+
\sum_{f\in\EdgesI} h_f^2 \| \jump{\bcurl(\vv_h \times (\TT_h-\TT)) } \|^2_{f}\, .
\end{aligned}
\]
Noting that
\[
\begin{aligned}
\sum_{f\in\EdgesI} h_f^2 \| \jump{\bcurl(\vv_h \times (\TT_h-\TT)) } \|^2_{f} 
\leq
\sum_{f\in\EdgesI} h_f^2 \bigl(&\| \bcurl(\vv_h \times (\TT_h-\TT))  \|^2_{f^+} 
\\
&\qquad + \| \bcurl(\vv_h \times (\TT_h-\TT))  \|^2_{f^-} \bigr) \, ,
\end{aligned}
\]
and using approximation estimates for $\TT \in \WW^1_\infty(\Omega_h)$, a scaled trace inequality and inverse estimates on $\vv_h$, we easily obtain
\begin{equation}\label{eq:tt-tth}
\begin{aligned}
\sum_{f\in\EdgesI} h_f^2 \| \jump{\bcurl(\vv_h \times (\TT_h-\TT)) } \|^2_{f}
&\lesssim
\sum_{f \in \EdgesI}h_f^2  
\| \nabla \vv_h \|^2_{\boldsymbol{L}^2(f^\pm)} \| \TT - \TT_h \|^2_{\boldsymbol{L}^{\infty}(f^\pm)} + \\
& \qquad+ \sum_{f \in \EdgesI}h_f^2\| \vv_h \|^2_{\boldsymbol{L}^2(f^\pm)} | \TT |^2_{\WW^1_{\infty}(f^\pm)}
\\
& \lesssim 
\sum_{E \in \Omega_h} h_E \| \vv_h \|^2_{E} |\TT|^2_{\WW^1_\infty(E)}
\lesssim h \,|\TT|^2_{\WW^1_\infty(\Omega_h)} \| \vv_h \|^2 \, .
\end{aligned}
\end{equation}
Hence
\[
\sum_{f\in\EdgesI} h_f^2 \| \jump{\bcurl(\vv_h \times \TT) } \|^2_{f}
\leq
\sum_{f\in\EdgesI} h_f^2 \| \jump{\bcurl(\vv_h \times \TT_h) } \|^2_{f}
+
 \sum_{E \in \Omega_h} h_E | \TT|^2_{\WW^1_\infty(\Omega_h)} \| \vv_h\|^2_E
\]
\end{remark}
The proof of the inf-sup stability is divided into two parts.
First, we control all the terms appearing in the previous definition, except for $\normacurl{\cdot}$, by testing the bilinear form $\astab(\cdot, \cdot, \cdot, \cdot)$ with a symmetric entry.
To handle the remaining term, we use a suitable interpolant of a function close to $\bcurl(\cdot \times \TT)$ and construct a function that satisfies a form of inf-sup condition with respect to such interpolant.

We omit the proof of the following result since it can be obtained with a standard argument in DG theory; we only underline that also Remark \ref{rem:thetaequiv} needs to be used to bound the $\normacip{\cdot}^2$ term in the right-hand side.

\begin{proposition}
\label{prp:coe}
Let the mesh assumptions \textbf{(MA1)}, and \textbf{(MA2)} if $k=1$. Assume the consistency assumption \textbf{(RA1)} holds.
If the parameter $\bdma$ in \eqref{eq:forme_d} is sufficiently large there exists a real positive constant $\ccoe$ such that for all $\vv_h \in \ZZd$ and $\HH_h \in \WWc$ the form $\astab(\cdot, \cdot, \cdot, \cdot)$ defined in \eqref{eq:astab} satisfies
\[
\astab(\vv_h, \BB_h, \vv_h, \BB_h) \geq 
\ccoe \left( \normas{\vv_h}^2 + \normaupw{\vv_h}^2 + \normacip{\vv_h}^2 + \normam{\BB_h}^2 \right) \, ,
\]
where the coefficient $\ccoe$ does not depend on the mesh size $h$ and on the problem parameters $\pss$, $\psm$, $\ns$, $\nm$, $\cc$, and $\TT$.
\end{proposition}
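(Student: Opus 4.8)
The plan is to test the form along the diagonal and exploit the skew-symmetry of the magneto-fluid coupling. Writing out $\astab(\vv_h,\BB_h,\vv_h,\BB_h)$ from \eqref{eq:astab}, the two coupling contributions appear as $-d(\BB_h,\vv_h)+d(\BB_h,\vv_h)=0$, so they cancel identically and the diagonal value splits into a purely fluid part and a purely magnetic part:
\[
\astab(\vv_h,\BB_h,\vv_h,\BB_h)=\pss\Vert\vv_h\Vert^2+\ns\,\ash(\vv_h,\vv_h)+c_h(\vv_h,\vv_h)+J_h(\vv_h,\vv_h)+\psm\Vert\BB_h\Vert^2+\nm\,\am(\BB_h,\BB_h).
\]
It then suffices to bound each group from below by the matching piece of the target norm. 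The magnetic terms are immediate, since $\am(\BB_h,\BB_h)=\Vert\bcurl(\BB_h)\Vert^2$ gives $\psm\Vert\BB_h\Vert^2+\nm\,\am(\BB_h,\BB_h)=\normam{\BB_h}^2$ for every $\BB_h\in\WWc$, which is why the bound needs no discreteness of the magnetic entry.

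For the fluid diffusion term I would run the standard symmetric interior penalty coercivity argument. The two consistency contributions $\sum_{f\in\Edges}(\media{\beps_h(\vv_h)\nn_f}_f,\jump{\vv_h}_f)_f$ are estimated by combining the trace inequality of Lemma \ref{lm:trace} with the inverse estimate of Lemma \ref{lm:inverse}, so that each is bounded by $\delta\Vert\beps_h(\vv_h)\Vert^2+C\delta^{-1}\sum_{f\in\Edges}h_f^{-1}\Vert\jump{\vv_h}_f\Vert_f^2$ for arbitrary $\delta>0$. Choosing $\delta$ small and then $\bdma$ sufficiently large absorbs these into the symmetric and penalty parts, yielding $\ash(\vv_h,\vv_h)\gtrsim\Vert\beps_h(\vv_h)\Vert^2+\bdma\sum_{f\in\Edges}h_f^{-1}\Vert\jump{\vv_h}_f\Vert_f^2$; multiplying by $\ns$ and adding $\pss\Vert\vv_h\Vert^2$ reproduces $\normas{\vv_h}^2$ up to a constant.

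For the convective form I would integrate $((\bnabla_h\vv_h)\,\cc,\vv_h)$ by parts element by element. Using $\diver\,\cc=0$ turns the volume term into a sum of face integrals of the type $\tfrac12(\cc\cdot\nn^E)|\vv_h|^2$; the boundary faces drop out because $\cc\in\HH_0(\diver,\Omega)$ forces $\cc\cdot\nn=0$ on $\partial\Omega$, while on interior faces the jump–average identity $2\,\media{\vv_h}_f\cdot\jump{\vv_h}_f=\jump{|\vv_h|^2}_f$ makes the surviving volume contribution cancel the centered flux term of $c_h(\cdot,\cdot)$ exactly. What remains is precisely the upwind penalty, so $c_h(\vv_h,\vv_h)=\normaupw{\vv_h}^2$.

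The last and most delicate step is the jump penalization. The first term of $J_h$ coincides verbatim with the first term of $\normacip{\cdot}^2$, so no work is needed there; the difficulty is that the second term of $J_h$ is built on $\TT$ whereas the seminorm $\normacip{\cdot}$ is built on its piecewise-constant approximation $\TT_h$. Here I would invoke Remark \ref{rem:thetaequiv}, in the direction bounding the $\TT_h$-based jumps by the $\TT$-based ones, to obtain $J_h(\vv_h,\vv_h)\geq\normacip{\vv_h}^2-C\,\bdmJJ\,h\,|\TT|^2_{\WW^1_\infty(\Omega_h)}\Vert\vv_h\Vert^2$. The stray $L^2$ remainder, carrying a factor $h$, is then absorbed into the reaction contribution $\pss\Vert\vv_h\Vert^2$ already controlled inside $\normas{\vv_h}^2$ (for $h$ below a data-dependent threshold), so that the final $\ccoe$ stays independent of all parameters. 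Collecting the four lower bounds and setting $\ccoe$ to the smallest constant produced closes the estimate. I expect the genuine obstacle to be exactly this $\TT$ versus $\TT_h$ reconciliation: keeping the absorbed remainder truly lower order is what forces the use of Remark \ref{rem:thetaequiv} and the piecewise-constant choice of $\TT_h$ in \eqref{eq:norme}, and it is the one ingredient that goes beyond a textbook DG coercivity proof, the large-$\bdma$ absorption being entirely routine.
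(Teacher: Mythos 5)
Your proof is correct and follows exactly the route the paper intends: the paper omits the proof of this proposition, declaring it a standard DG coercivity argument whose only nonstandard ingredient is the use of Remark \ref{rem:thetaequiv} to reconcile the $\TT$-based form $J_h$ with the $\TT_h$-based seminorm $\normacip{\cdot}$, and that reconciliation is precisely the step you isolate and carry out (the coupling cancellation, SIP absorption via large $\bdma$, and the identity $c_h(\vv_h,\vv_h)=\normaupw{\vv_h}^2$ are all as the paper assumes). The one caveat, which you already acknowledge, is that absorbing the remainder $C\,\bdmJJ\, h\,|\TT|^2_{\WW^1_\infty(\Omega_h)}\Vert\vv_h\Vert^2$ into $\pss\Vert\vv_h\Vert^2$ requires a mild, data-dependent smallness condition on $h$ (or a $\ccoe$ depending on $\bdmJJ h |\TT|^2_{\WW^1_\infty(\Omega_h)}/\pss$) that the proposition's statement does not announce --- a looseness you inherit from the paper rather than introduce.
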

The following result aims to establish control over $\normacurl{\vv_h}$.
 We emphasize that, in the previous proposition, we managed to control all the terms in the norm except for 
$\normacurl{\vv_h}$.
\begin{proposition} \label{prp:preliminary inf sup}
Let the mesh assumptions \textbf{(MA1)}, and \textbf{(MA2)} if $k=1$. Assume that the consistency assumption \textbf{(RA1)} holds. 
Then for any $(\vv_h, \BB_h) \in \ZZd \times \WWd$ it exists $\HH_h \in \WWd$ such that
\begin{equation}\label{newprop:objective}
\begin{aligned}
\astab(\vv_h, \BB_h, 0, \HH_h) &\geq C_1\normacurl{v_h}^2 -C \Big(\bdmJJ^{-1} \normacip{\vv_h}^2 + (1 + \psm  h) \normam{\BB_h}^2
\\ & \qquad      +  \frac{| \TT |^2_{\WW^1_\infty(\Omega_h)^3} \,  h}{\pss} \| \vv_h \|^2_S 
+  \left(\frac{\bdmJ + \bdmJJ}{\bdmJ\,\bdmJJ} \right) \normacip{\vv_h}^2\Big) \, ,
\end{aligned}
\end{equation}
where the constants $C_1$ and $C$ does not depend on the mesh size $h$ and on the problem parameters $\pss$, $\psm$, $\ns$, $\nm$, $\cc$, and $\TT$.
\end{proposition}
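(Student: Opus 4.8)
The plan is to test $\astab$ only in its magnetic slot, exploiting that the velocity test function is $0$. Writing out the definition \eqref{eq:astab} with first pair $(\vv_h,\BB_h)$ and test pair $(0,\HH_h)$, every term carrying the vanishing test velocity drops, leaving
\[
\astab(\vv_h,\BB_h,0,\HH_h)=\psm(\BB_h,\HH_h)+\nm\,\am(\BB_h,\HH_h)+d(\HH_h,\vv_h).
\]
The whole game is to choose $\HH_h\in\WWd$ so that the coupling term $d(\HH_h,\vv_h)=(\bcurl(\HH_h)\times\TT,\vv_h)$ reproduces (a constant times) $\normacurl{\vv_h}^2$, while the two magnetic terms and all consistency errors are absorbed into the negative contributions on the right-hand side of \eqref{newprop:objective}.

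First I would set $\ww:=\bcurl_h(\vv_h\times\TT_h)\in[\Pk_{k-1}(\Omega_h)]^3$, which is exactly the field whose weighted $L^2$-norm is $\normacurl{\vv_h}$. Since $\gamma^{-1}h^2\ww$ is only piecewise polynomial, I would pass it through a curl-conforming averaging (Oswald-type) operator $\mathcal{R}_h\colon[\Pk_{k-1}(\Omega_h)]^3\to\WWd$ and set $\HH_h:=-\mathcal{R}_h(\gamma^{-1}h^2\ww)$, the averaging error being controlled by the tangential jumps of $\gamma^{-1}h^2\ww$, in the spirit of Proposition \ref{prp:oswald}. Using the scalar triple product and an elementwise integration by parts for the $\bcurl$ operator,
\[
d(\HH_h,\vv_h)=-(\bcurl(\HH_h),\vv_h\times\TT)=-(\HH_h,\bcurl_h(\vv_h\times\TT))-\sum_{f\in\EdgesI}(\nn_f\times\HH_h,\jump{\vv_h\times\TT}_f)_f,
\]
where the boundary-face contributions vanish because $\vv_h\cdot\nn=\TT\cdot\nn=0$ on $\partial\Omega$ force $\vv_h\times\TT$ to be normal while $\nn_f\times\HH_h$ is tangential. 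Replacing $\TT$ by $\TT_h$ in the volume term and recalling that $\HH_h\approx-\gamma^{-1}h^2\ww$, so that $-(\HH_h,\ww)\approx(\gamma^{-1}h^2\ww,\ww)=\normacurl{\vv_h}^2$, produces the leading term.

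It then remains to bound the error groups by Cauchy–Schwarz and Young, tracking the scaling $\gamma=\max\{h,\nm\}$; the key elementary facts are $\gamma^{-1}h\le1$ and $\nm\gamma^{-1}\le1$, which yield the stability bounds $\|\HH_h\|\lesssim h^{1/2}\normacurl{\vv_h}$ and $\|\bcurl(\HH_h)\|\lesssim\gamma^{-1/2}\normacurl{\vv_h}$ (the latter via Lemma \ref{lm:inverse}). Concretely: (i) the averaging error $(\mathcal{R}_h(\gamma^{-1}h^2\ww)-\gamma^{-1}h^2\ww,\ww)$ is dominated, after the jump estimate of $\mathcal{R}_h$ together with $\gamma^{-1}h_f\le1$, by $\bdmJJ^{-1}\normacip{\vv_h}^2$, since the jumps of $\ww$ are precisely the second CIP term; (ii) the interior-face term is controlled through the first CIP term, giving a $\bdmJ^{-1}\normacip{\vv_h}^2$ contribution, so that together with (i) one recovers the coefficient $\frac{\bdmJ+\bdmJJ}{\bdmJ\,\bdmJJ}$; (iii) the consistency error from $\TT-\TT_h$ is estimated exactly as in Remark \ref{rem:thetaequiv} and \eqref{eq:tt-tth} and, after using $\pss\|\vv_h\|^2\le\normas{\vv_h}^2$, delivers $\frac{|\TT|^2_{\WW^1_\infty(\Omega_h)}\,h}{\pss}\|\vv_h\|^2_S$; (iv) the two magnetic terms are split so that $|\psm(\BB_h,\HH_h)|$ and $\nm|\am(\BB_h,\HH_h)|$ give back at most $C(1+\psm h)\normam{\BB_h}^2$ plus an arbitrarily small multiple of $\normacurl{\vv_h}^2$. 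Collecting everything and choosing the Young parameters small enough that the $\normacurl{\vv_h}^2$ leftover stays bounded below by a positive constant yields \eqref{newprop:objective} with some $C_1>0$.

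The main obstacle I anticipate is constructing the curl-conforming averaging operator $\mathcal{R}_h$ into the second-kind Nédélec space $\WWd$ and proving the associated jump estimate that ties the averaging error to the CIP seminorm $\normacip{\cdot}$; this is the $\HH(\bcurl)$ analogue of Proposition \ref{prp:oswald} and is exactly the ``suitable interpolant'' announced in the introduction, replacing the orthogonality-based argument of \cite{BDV:2024}. A secondary but pervasive difficulty is the bookkeeping of the $\gamma$-dependent weights, so that every error term lands with the precise power of $h$, $\nm$, $\bdmJ$ and $\bdmJJ$ appearing in \eqref{newprop:objective}; the structural inequalities $\gamma\ge h$ and $\gamma\ge\nm$ are what make all of these absorptions possible.
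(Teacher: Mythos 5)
Your overall strategy coincides with the paper's: test only in the magnetic slot, so that $\astab(\vv_h,\BB_h,0,\HH_h)=\psm(\BB_h,\HH_h)+\nm\am(\BB_h,\HH_h)+d(\HH_h,\vv_h)$, integrate $d(\HH_h,\vv_h)$ by parts elementwise, extract $\normacurl{\vv_h}^2$ from the volume pairing with $\bcurl_h(\vv_h\times\TT_h)$, and absorb the $\TT-\TT_h$ consistency error, the face terms, and the two magnetic terms exactly into the four negative groups of \eqref{newprop:objective}. Your scaling bookkeeping ($\gamma^{-1}h\le 1$, $\nm\gamma^{-1}\le 1$, hence $\|\HH_h\|\lesssim h^{1/2}\normacurl{\vv_h}$ and $\nm^{1/2}\|\bcurl(\HH_h)\|\lesssim\normacurl{\vv_h}$) reproduces the paper's estimates correctly.

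The genuine gap is the one you yourself flag: the construction of a curl-conforming averaging operator $\mathcal{R}_h$ into $\WWd$ whose error is controlled by tangential jumps is the heart of the argument, and you do not supply it. The paper avoids needing any such operator. It proceeds in two steps that are both already available: first it applies the \emph{scalar} Oswald operator $\intcip$ of Proposition~\ref{prp:oswald} componentwise to $\bcurl_h(\vv_h\times\TT_h)$, obtaining $\pp_h\in\mathbb{O}_{k-1}(\Omega_h)$ with an error controlled by the jumps appearing in the second CIP term; second, it invokes Lemma~4.2 of \cite{BDV:2024}, which produces a \emph{companion} function $\hat\HH_h\in\Pkc_k(\Omega_h)\cap\WWc\subset\WWd$ satisfying only the one-sided pairing inequality $(\hat\HH_h,\pp_h)\gtrsim\sum_E h_E^2\|\pp_h\|_E^2$ together with the norm bound $\sum_E h_E^{-2}\|\hat\HH_h\|_E^2\lesssim\sum_E h_E^2\|\pp_h\|_E^2$, and then sets $\HH_h=\gamma^{-1}\hat\HH_h$. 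This is a strictly weaker requirement than your quasi-interpolation property ($\hat\HH_h$ need not approximate $\gamma^{-1}h^2\pp_h$ at all; it only has to pair positively with $\pp_h$ at the right scale), and since $\hat\HH_h$ is $H^1$-conforming the face term collapses onto $\jump{\TT\times\vv_h}_f$, i.e.\ the first CIP contribution, just as in your computation. If you want to complete your version, you would either have to prove the $\HH(\bcurl)$ Oswald estimate you describe, or — more economically — replace $\mathcal{R}_h(\gamma^{-1}h^2\ww)$ by the two-step construction above, which turns your heuristic identity $-(\HH_h,\ww)\approx\normacurl{\vv_h}^2$ into the rigorous lower bound $(\pp_h,\HH_h)\ge\tfrac12\normacurl{\vv_h}^2-C\bdmJJ^{-1}\normacip{\vv_h}^2$ used in the paper.
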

\begin{proof}
We start by constructing $\HH_h$. We introduce $\pp_h\in\mathbb{O}_{k-1}(\Omega_h)$ as the interpolant of $\bcurl_h(\vv_h \times \TT_h)$ defined in Proposition~\ref{prp:oswald}.
Let $\hat\HH_h \in \Pkc_k(\Omega_h) \cap \WWc$ be the function constructed in \cite{BDV:2024} Lemma 4.2 that satisfies the following two inequalities:
\begin{equation}\label{eq:infsupHH}
\left\{
\begin{aligned}
&\sum_{E \in \Omega_h} h_E^{-2} \| \hat\HH_h \|_E^2 \lesssim \sum_{E \in \Omega_h} h_E^{2} \| \pp_h \|_E^2 \, , \\
&(\hat\HH_h, \pp_h) \gtrsim \sum_{E \in \Omega_h} h_E^{2} \| \pp_h \|_E^2 \, .
\end{aligned}
\right .
\end{equation}
Finally, we define $\HH_h = \gamma^{-1} \hat \HH_h$ and proceed to show that it satisfies \eqref{newprop:objective}. 

\noindent
We observe that, since the third entry is equal to zero, we easily obtain that
\begin{equation}\label{eq:equalToZero}
\pss (\vv_h, 0) = 
\ns \, \ash(\vv_h, 0) = c_h(\vv_h, 0) = d(\BB, 0)   = J_h(\vv_h, 0) = 0 \, .
\end{equation}
Now, we estimate the nonzero terms. Using Cauchy-Schwarz inequality, property \eqref{eq:infsupHH}, the stability of the interpolant in Proposition~\ref{prp:oswald}, and the regularity of the mesh, we obtain
\begin{equation}\label{eq:smh_estimate}
\begin{aligned}
| \psm (\BB_h, \HH_h) |
&\geq
- \psm \| \BB_h \| \| \HH_h \| \\
&\gtrsim 
-\psm\left( \sum_{E \in \Omega_h} h_E^2 \, \| \BB_h \|^2_E\right)^{\frac{1}{2}} \left( \sum_{E \in \Omega_h} h_E^{-2} \, \| \HH_h \|^2_E\right)^{\frac{1}{2}} \\
&=
-\psm\left( \sum_{E \in \Omega_h} h_E^2 \, \| \BB_h \|^2_E\right)^{\frac{1}{2}} \gamma^{-1}\left( \sum_{E \in \Omega_h} h_E^{-2} \, \| \Hat{\HH}_h \|^2_E\right)^{\frac{1}{2}} \\
&\gtrsim 
-\psm \, h^{\frac{1}{2}} \, \| \BB_h \| \gamma^{-\frac{1}{2}} \left( \sum_{E \in \Omega_h} h_E^{2} \, \| \pp_h \|^2_E\right)^{\frac{1}{2}} \\
&\gtrsim 
-\psm \, h^{\frac{1}{2}} \, \| \BB_h \| \normacurl{\vv_h} 
\gtrsim 
-\psm^{\frac{1}{2}} \, h^{\frac{1}{2}} \, \normam{\BB_h} \normacurl{\vv_h} \, .
\end{aligned}
\end{equation}
The bilinear form $\am(\BB_h,\HH_h)$ is estimated in a very similar way, by using also a polynomial inverse estimate and the fact that $\gamma \geq \nm$. We get

\begin{equation}\label{eq:amh_estimate}
\begin{aligned}
| \am (\BB_h, \HH_h) |
&\geq
- \nm \| \bcurl (\BB_h) \| \| \bcurl (\HH_h) \| \\
&\gtrsim 
-\nm\left( \sum_{E \in \Omega_h}  \| \bcurl (\BB_h) \|^2_E\right)^{\frac{1}{2}} \left( \sum_{E \in \Omega_h}  \| \bcurl (\HH_h) \|^2_E\right)^{\frac{1}{2}} \\
&\gtrsim
-\nm^{\frac{1}{2}}\normam{\BB_h} \left( \sum_{E \in \Omega_h} h_E^{-2} \, \| \HH_h \|^2_E\right)^{\frac{1}{2}} 
\!\gtrsim
-\nm^{\frac{1}{2}}\normam{\BB_h} \gamma^{-1}\!\left( \sum_{E \in \Omega_h} h_E^{-2} \, \| \hat{\HH}_h \|^2_E\right)^{\frac{1}{2}} \\
&\gtrsim 
-\nm^{\frac{1}{2}}\normam{\BB_h} \gamma^{-1}\left( \sum_{E \in \Omega_h} h_E^2 \, \| \bcurl_h(\vv_h \times \TT_h) \|^2_E\right)^{\frac{1}{2}} \gtrsim 
-\normam{\BB_h} \normacurl{\vv_h}\, .
\end{aligned}
\end{equation}
For the bilinear form $d(\HH_h,\vv_h)$, thanks to integration by parts, we have that 
\begin{equation}\label{eq:d-form_h}
\begin{aligned}
d(\HH_h, \vv_h) 
&= 
(\bcurl (\HH_h) \times \TT, \vv_h) \\
&=
(\bcurl_h (\TT \times \vv_h), \HH_h) 
+
\sum_{E \in \Omega_h}
(\HH_h \times \nn_E ,   \TT  \times \vv_h)_{\partial E} \, .
\end{aligned}
\end{equation}
The first term is split as
\[
\begin{aligned}
(\bcurl_h (\TT \times \vv_h), \HH_h) 
&=
(\bcurl_h ((\TT - \TT_h) \times \vv_h) , \HH_h) 
+
(\bcurl_h (\TT_h \times \vv_h) , \HH_h) \\
&=
(\bcurl_h ((\TT - \TT_h) \times \vv_h) , \HH_h) \\
& \qquad + (\bcurl_h (\TT_h \times \vv_h) - \pp_h, \HH_h) \\
& \qquad + (\pp_h, \HH_h) \\
& =: T_{\TT,1} + T_{\TT,2} + T_{\TT,3} \, .
\end{aligned}
\]
\textit{Estimate of $T_{\TT,1}$:} Thanks to the regularity of $\TT$, we have that
\begin{equation*}
\begin{aligned}
(\bcurl_h ((\TT - \TT_h) \times \vv_h) , \HH_h)
& \geq
- \left( \sum_{E \in \Omega_h} h_E^2 \|\bcurl_h ((\TT - \TT_h) \times \vv_h)\|_E^2 \right)^{\frac{1}{2}} \\
& \qquad \left( \sum_{E \in \Omega_h} h_E^{-2} \| \HH_h\|_E^2 \right)^{\frac{1}{2}} \, .
\end{aligned}
\end{equation*}
Using the vector calculus identity,
\begin{equation}\label{eq:curl-identity}
\bcurl(A \times B) = (\diver B) \, A - (\diver A) \, B + (\bnabla A) B - (\bnabla B) A \, ,
\end{equation}
and a polynomial inverse estimate combined with standard approximation results, we can easily obtain
\[
\|\bcurl_h ((\TT - \TT_h) \times \vv_h)\|^2_E \leq |\TT|_{\WW^1_\infty(\Omega_h)} \, \| \vv_h \|_E \, .
\]
Hence, using the same steps as the previous estimates, we have that
\[
\begin{aligned}
(\bcurl ((\TT - \TT_h) \times \vv_h) , \HH_h)
& \gtrsim
- \left( \sum_{E \in \Omega_h} h_E^2 |\TT|_{\WW^1_\infty(\Omega_h)} \, \| \vv_h \|_E \right)^{\frac{1}{2}} \\
& \qquad \gamma^{-1}\left( \sum_{E \in \Omega_h} h_E^2 \, \| \bcurl_h(\vv_h \times \TT_h) \|^2_E\right)^{\frac{1}{2}} \\
& \gtrsim
-
| \TT |_{\WW^1_\infty(\Omega_h)} h^{\frac{1}{2}} \| \vv_h \| \, \normacurl{\vv_h} \\
&\gtrsim
-
\left( \frac{| \TT |^2_{\WW^1_\infty(\Omega_h)} \,  h}{\pss}\right)^{\frac{1}{2}} \| \vv_h \|_S \, \normacurl{\vv_h} \, .
\end{aligned}
\]
\textit{Estimate of $T_{\TT,2}$:} Using Proposition \ref{prp:oswald}, the definition and properties of $\HH_h$, and recalling that $h_E \le h$, 
we obtain
\[
\begin{aligned}
(\bcurl_h (\TT_h \times \vv_h) - \pp_h, \HH_h)
& \gtrsim
-\left( \sum_{f \in \EdgesI} h_f^3 \| \jump{\bcurl_h(\TT_h \times \vv_h)}\|_f^2\right)^{\frac{1}{2}} \\
& \qquad \gamma^{-1} \left(  \sum_{E \in \Omega_h} h_E^{2} \, \| \bcurl_h(\vv_h \times \TT_h) \|^2_E\right)^{\frac{1}{2}} \\
&\gtrsim - \left(\frac{1}{\bdmJJ}\right)^{\frac{1}{2}} \normacip{\vv_h}\normacurl{\vv_h}
\, .
\end{aligned}
\]
\textit{Estimate of $T_{\TT,3}$:} Using the definition of $\HH_h$ and the properties of $\hat\HH_h$ yields
\[
(\pp_h, \HH_h) 
=
\gamma^{-1} (\pp_h, \hat\HH_h)
\gtrsim
\gamma^{-1} \sum_{E \in \Omega_h} h_E^2 \| \pp_h \|^2_E \, .
\]
Now, using triangular inequality, we observe that
\[
\|  \bcurl_h(\vv_h \times \TT_h) \|^2_E
\leq
2\| \pp_h \|^2_E 
+ 
2\|  \bcurl_h(\vv_h \times \TT_h)  - \pp_h\|^2_E \, ,
\]
which implies 
\[
\| \pp_h \|^2_E
\geq
\frac{1}{2}\|  \bcurl_h(\vv_h \times \TT_h) \|^2_E
-
\|  \bcurl_h(\vv_h \times \TT_h)  - \pp_h\|^2_E
 \, .
\]
Using Proposition \ref{prp:oswald}, we obtain
\[
\begin{aligned}
(\pp_h, \HH_h) 
&\geq
 \frac{1}{2}\gamma^{-1} \sum_{E \in \Omega_h} \left( h_E^2\|  \bcurl_h(\vv_h \times \TT_h) \|^2_E
- C h_E^3 \| \jump{\bcurl_h(\vv_h \times \TT_h)} \|^2_{\partial E} \right) \\
& \geq
\frac{1}{2}\normacurl{v_h}^2 - C \gamma^{-1} \bdmJJ^{-1} h \normacip{\vv_h}^2
 \geq \frac{1}{2}\normacurl{v_h}^2 - C \bdmJJ^{-1} \normacip{\vv_h}^2
\, ,
\end{aligned}
\]
where in the last inequality we simply used that $\gamma^{-1} h \le 1$ by the definition of $\gamma$.

Using Cauchy-Schwarz inequality, Proposition \ref{prp:oswald}, the definition of the jump operator, trace inequality, and \eqref{eq:infsupHH}, we have that
\[
\begin{aligned}
\sum_{E \in \Omega_h}
(\HH_h \times \nn_E ,   \TT  \times \vv_h)_{\partial E}
&\gtrsim -
\left(
\bdmJ^{-1} \sum_{f \in \Edges } \| \HH_h \|^2_f
\right)^{\frac{1}{2}}
\left( \bdmJ
\sum_{f \in \Edges } \| \jump{\vv_h \times \TT} \|^2_f
\right)^{\frac{1}{2}} \\
&\gtrsim -
\left(
\bdmJ^{-1}\sum_{E \in \Omega_h} h_E^{-1}\| \HH_h \|^2_E
\right)^{\frac{1}{2}}
\normacip{\vv_h} \\
&\gtrsim -
\gamma^{-\frac{1}{2}}\left(
\bdmJ^{-1}\sum_{E \in \Omega_h} h_E^{-2}\| \hat\HH_h \|^2_E
\right)^{\frac{1}{2}}
\normacip{\vv_h} \\
&\gtrsim -
\bdmJ^{-\frac{1}{2}}
\normacip{\vv_h}
\normacurl{\vv_h}\, .
\end{aligned}
\]
Gathering all the above estimates, we have obtained
\begin{equation}\label{newprop:almost}
\begin{aligned}
\astab(\vv_h, \BB_h, 0, \HH_h) &\geq \tilde C_1\normacurl{\vv_h}^2 - \tilde C_2\bdmJJ^{-1} \normacip{\vv_h}^2  \\
& \qquad - \tilde C_3\Big((1 + \psm  h) \normam{\BB_h}^2 
+
\frac{| \TT |^2_{\WW^1_\infty(\Omega)^3} \,  h}{\pss} \| \vv_h \|^2_S \\
& \qquad \qquad +  \Big(\frac{\bdmJ + \bdmJJ}{\bdmJ\,\bdmJJ} \Big) \normacip{\vv_h}^2\Big)^\frac{1}{2}\normacurl{v_h}^2 \, ,
\end{aligned}
\end{equation}
%
\\
where the constants $\tilde C_i$ do not depend on the mesh size $h$.
The thesis now follows by using Young inequality.
\end{proof}

In the following lemma, we prove that $\normam {\HH_h}$ is controlled by the norms of $\vv_h$.
\begin{lemma}[Continuity of the norm]
\label{lem:norm-cont}
Let the mesh assumptions \textbf{(MA1)}, and \textbf{(MA2)} if $k=1$. Assume that the consistency assumption \textbf{(RA1)} holds.
Then the test function $H_h$ introduced in Proposition \ref{prp:preliminary inf sup} satisfies 
\[
\normam{\HH_h}^2 
\lesssim
\dfrac{\psm}{\pss} \| \TT \|^2_{\boldsymbol{L}^\infty(\Omega_h)} \normas{\vv_h}^2 
+
\normacurl{\vv_h}^2 \,. 
\]
\end{lemma}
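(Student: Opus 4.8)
The plan is to exploit the explicit definition $\HH_h=\gamma^{-1}\hat\HH_h$ introduced in Proposition~\ref{prp:preliminary inf sup}, together with the two properties \eqref{eq:infsupHH} of $\hat\HH_h$ and the stability of the averaging interpolant $\pp_h=\intcip(\bcurl_h(\vv_h\times\TT_h))$ from Proposition~\ref{prp:oswald}. I would split the magnetic norm as $\normam{\HH_h}^2=\psm\|\HH_h\|^2+\nm\|\bcurl(\HH_h)\|^2$ and treat the two contributions separately, because they call for two different ways of estimating $\pp_h$: the diffusion part can be absorbed into $\normacurl{\vv_h}^2$, whereas the reaction part must instead be steered towards $\normas{\vv_h}^2$.

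First I would record the two consequences of Proposition~\ref{prp:oswald} that drive the argument. Writing $q:=\bcurl_h(\vv_h\times\TT_h)$, the triangle inequality and the stability bound $\sum_{E\in\Omega_h}h_E^2\|(I-\intcip)q\|_E^2\lesssim\sum_{E\in\Omega_h}h_E^2\|q\|_E^2$ give $\sum_{E\in\Omega_h}h_E^2\|\pp_h\|_E^2\lesssim\sum_{E\in\Omega_h}h_E^2\|q\|_E^2$; by the definition of the seminorm this last quantity equals $\gamma\,\normacurl{\vv_h}^2$ (Route A). Alternatively, since $\vv_h\in\ZZd$ is divergence free and $\TT_h$ is piecewise constant, the identity \eqref{eq:curl-identity} collapses elementwise to $q=(\bnabla_h\vv_h)\,\TT_h$; hence $\|q\|_E\le\|\TT\|_{\boldsymbol{L}^\infty(E)}\,|\vv_h|_{1,E}$ (using that $\TT_h$ is the local $L^2$-average of $\TT$), and the inverse estimate of Lemma~\ref{lm:inverse} yields $\sum_{E\in\Omega_h}h_E^2\|q\|_E^2\lesssim\|\TT\|_{\boldsymbol{L}^\infty(\Omega_h)}^2\,\|\vv_h\|^2$ (Route B).

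For the diffusion term I would use the polynomial inverse estimate $\|\bcurl(\hat\HH_h)\|_E\lesssim h_E^{-1}\|\hat\HH_h\|_E$ together with the first line of \eqref{eq:infsupHH} and Route A to obtain $\|\bcurl(\hat\HH_h)\|^2\lesssim\sum_{E\in\Omega_h}h_E^{-2}\|\hat\HH_h\|_E^2\lesssim\gamma\,\normacurl{\vv_h}^2$; multiplying by $\nm\gamma^{-2}$ and using $\nm\le\gamma$ gives $\nm\|\bcurl(\HH_h)\|^2\lesssim\normacurl{\vv_h}^2$. For the reaction term I would first pass from the global $L^2$-norm to $h_E^{-2}$-weighted local norms via $h_E\le h$, namely $\|\HH_h\|^2\le h^2\gamma^{-2}\sum_{E\in\Omega_h}h_E^{-2}\|\hat\HH_h\|_E^2\lesssim h^2\gamma^{-2}\sum_{E\in\Omega_h}h_E^2\|\pp_h\|_E^2$; the decisive step is then to invoke Route B (not Route A), which gives $\|\HH_h\|^2\lesssim h^2\gamma^{-2}\|\TT\|_{\boldsymbol{L}^\infty(\Omega_h)}^2\|\vv_h\|^2\lesssim\|\TT\|_{\boldsymbol{L}^\infty(\Omega_h)}^2\|\vv_h\|^2$ since $h\le\gamma$. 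Multiplying by $\psm$ and using $\pss\|\vv_h\|^2\le\normas{\vv_h}^2$ produces exactly the first term of the claim, and summing the two estimates gives the thesis.

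I expect the reaction term to be the main obstacle, and the \emph{crucial} observation is precisely the choice of Route B over Route A there: bounding $\psm\|\HH_h\|^2$ through the $\normacurl{\cdot}$ seminorm in the same way as the diffusion term would leave an uncontrolled factor $\psm h$, whereas exploiting $\diver\,\vv_h=0$ and the piecewise-constant structure of $\TT_h$ to rewrite $q=(\bnabla_h\vv_h)\,\TT_h$ and then applying the inverse estimate correctly trades the mesh weights for the factor $\pss^{-1}\|\TT\|_{\boldsymbol{L}^\infty(\Omega_h)}^2$ required by the statement. The rest is bookkeeping with $\gamma=\max\{h,\nm\}$, repeatedly using $\gamma\ge h$ and $\gamma\ge\nm$ to discard the residual powers of $h\gamma^{-1}$ and $\nm\gamma^{-1}$.
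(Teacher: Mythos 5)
Your proof is correct and follows essentially the same route as the paper: both split $\normam{\HH_h}^2$ into reaction and diffusion parts, bound the diffusion part via a polynomial inverse estimate, \eqref{eq:infsupHH} and $\nm\le\gamma$ to land in $\normacurl{\vv_h}^2$, and bound the reaction part by rewriting $\bcurl_h(\vv_h\times\TT_h)=(\bnabla_h\vv_h)\,\TT_h$ (using $\diver\vv_h=0$ and $\TT_h$ piecewise constant) plus an inverse estimate to reach $\pss^{-1}\|\TT\|^2_{\boldsymbol{L}^\infty(\Omega_h)}\normas{\vv_h}^2$. The bookkeeping with $\gamma=\max\{h,\nm\}$ matches the paper's as well.
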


\begin{proof}
We need to estimate the two terms that appear in the definition of $\normam{\HH_h}$. For the first one, we have
\[
\begin{aligned}
\psm \| \HH_h\|^2
&=
\psm \, \gamma^{-2} \, \| \hat \HH_h\|^2
\leq
\psm \sum_{E} h_E^{-2} \| \hat \HH_h\|^2_E \\
&\lesssim
\psm \sum_{E} h_E^{2} \| \pp_h\|^2_E 
\lesssim
\psm \sum_{E} h_E^{2} \| \bcurl_h(\vv_h \times \TT_h)\|^2_E \\
&\lesssim
\psm \| \TT \|_{\boldsymbol{L}^\infty(\Omega_h)}^2 \| \vv_h\|^2 
\lesssim
\dfrac{\psm}{\pss} \| \TT \|^2_{\boldsymbol{L}^\infty(\Omega_h)} \normas{\vv_h}^2 \, .
\end{aligned}
\]
For the second term, we  use an inverse polynomial inequality, property \eqref{eq:infsupHH}, and the definition of $\gamma$
\[
\begin{aligned}
\nm \| \bcurl (\HH_h)\|^2
&\lesssim
\nm \sum_{E \in \Omega_h} h_E^{-2} \| \HH_h \|^2
=
\nm \, \gamma^{-2}\,\sum_{E \in \Omega_h} h_E^{-2} \| \hat\HH_h \|^2 \\
&\lesssim
\gamma^{-1}\,\sum_{E \in \Omega_h} h_E^{2} \| \bcurl_h(\vv_h \times \TT_h) \|^2_E = \normacurl{\vv_h}^2 \,. 
\end{aligned}
\]


\end{proof}
Finally, we conclude this section by establishing the inf-sup condition for our method.
\begin{theorem}\label{th:cip-inf-sup}
Let the mesh assumptions \textbf{(MA1)}, \textbf{(MA2)} if $k=1$. Assume that the consistency assumption \textbf{(RA1)} holds. 
We have that:
\begin{equation} \label{eq:infsup}
\normastab{\vv_h} + \normam{\BB_h} 
\lesssim 
\sup_{(\ww_h,\mathbf{K}_h) \in \ZZd \times \WWd} \dfrac{\astab(\vv_h, \BB_h, \ww_h, \mathbf{K}_h)}{\normastab{\ww_h} + \normam{\mathbf{K}_h}}
\qquad \text{$\forall (\vv_h,\BB_h) \in \ZZd \times \WWd$.}
\end{equation}
\end{theorem}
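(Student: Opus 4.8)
The plan is to obtain \eqref{eq:infsup} by combining the two results already available: Proposition~\ref{prp:coe}, which upon testing with the symmetric entry controls every contribution to $\normastab{\vv_h}$ \emph{except} $\normacurl{\vv_h}$, and Proposition~\ref{prp:preliminary inf sup}, which recovers precisely the missing $\normacurl{\vv_h}$ at the price of some negative lower-order terms. Given $(\vv_h,\BB_h)\in\ZZd\times\WWd$, I would let $\HH_h\in\WWd$ be the function furnished by Proposition~\ref{prp:preliminary inf sup} and test $\astab$ with the \emph{augmented} pair $(\ww_h,\mathbf{K}_h)=(\vv_h,\BB_h+\delta\HH_h)$, where $\delta>0$ is a small constant to be fixed. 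Exploiting bilinearity of $\astab(\cdot,\cdot,\cdot,\cdot)$ in its last two entries, the basic identity is
\[
\astab(\vv_h,\BB_h,\vv_h,\BB_h+\delta\HH_h)=\astab(\vv_h,\BB_h,\vv_h,\BB_h)+\delta\,\astab(\vv_h,\BB_h,0,\HH_h)\, .
\]

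Next I would bound the two summands from below. The first is handled by Proposition~\ref{prp:coe}, yielding a multiple of $\normas{\vv_h}^2+\normaupw{\vv_h}^2+\normacip{\vv_h}^2+\normam{\BB_h}^2$; the second, multiplied by $\delta$, is handled by Proposition~\ref{prp:preliminary inf sup} and provides the sought positive term $\delta C_1\normacurl{\vv_h}^2$ together with negative contributions proportional to $\normacip{\vv_h}^2$, $\normam{\BB_h}^2$ and $\normas{\vv_h}^2$. The decisive step is the absorption: I would first choose the penalty parameters $\bdmJ,\bdmJJ$ large enough to make the factor multiplying $\normacip{\vv_h}^2$ small, and then choose $\delta$ small enough that the residual negative multiples of $\normam{\BB_h}^2$ and $\normas{\vv_h}^2$ are swallowed by the corresponding positive terms coming from Proposition~\ref{prp:coe}. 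This produces
\[
\astab(\vv_h,\BB_h,\vv_h,\BB_h+\delta\HH_h)\gtrsim \normastab{\vv_h}^2+\normam{\BB_h}^2\, .
\]

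It then remains to estimate the norm of the test pair appearing in the denominator of \eqref{eq:infsup}, and here I would invoke the continuity Lemma~\ref{lem:norm-cont}. By the triangle inequality $\normam{\BB_h+\delta\HH_h}\leq\normam{\BB_h}+\delta\normam{\HH_h}$, and the lemma bounds $\normam{\HH_h}^2$ by $\normas{\vv_h}^2$ and $\normacurl{\vv_h}^2$, both of which are dominated by $\normastab{\vv_h}^2$; hence $\normastab{\ww_h}+\normam{\mathbf{K}_h}\lesssim\normastab{\vv_h}+\normam{\BB_h}$. Combining the numerator lower bound with this denominator upper bound and using $(a^2+b^2)/(a+b)\geq\tfrac12(a+b)$ for $a,b\geq0$, I would conclude
\[
\sup_{(\ww_h,\mathbf{K}_h)\in\ZZd\times\WWd}\frac{\astab(\vv_h,\BB_h,\ww_h,\mathbf{K}_h)}{\normastab{\ww_h}+\normam{\mathbf{K}_h}}\geq\frac{\astab(\vv_h,\BB_h,\vv_h,\BB_h+\delta\HH_h)}{\normastab{\vv_h}+\normam{\BB_h+\delta\HH_h}}\gtrsim\normastab{\vv_h}+\normam{\BB_h}\, ,
\]
which is the claimed estimate.

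I expect the main obstacle to be precisely the absorption/balancing step. One must verify that a single admissible choice of $\delta$ (together with $\bdma$, $\bdmJ$, $\bdmJJ$) simultaneously defeats all the negative contributions of Proposition~\ref{prp:preliminary inf sup} while keeping the coefficient in front of $\normacurl{\vv_h}^2$ bounded below, and that the parameter-carrying factors appearing along the way (the $\psm h$ and $|\TT|^2_{\WW^1_\infty(\Omega_h)}\,h/\pss$ terms in Proposition~\ref{prp:preliminary inf sup}, and the $\psm/\pss$ factor in Lemma~\ref{lem:norm-cont} controlling the denominator) do not spoil the independence of the final constant on the physical parameters. It is exactly this interplay between the numerator's lower bound and the denominator's upper bound, mediated by the continuity lemma, that yields quasi-robustness rather than mere stability.
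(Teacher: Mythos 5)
Your proposal is correct and follows essentially the same route as the paper, whose proof tests with $(\ww_h,\mathbf{K}_h)=\kappa(\vv_h,\BB_h)+(0,\HH_h)$ for $\kappa$ large --- which, since the quotient in \eqref{eq:infsup} is invariant under positive rescaling of the test pair, is exactly your choice $(\vv_h,\BB_h+\delta\HH_h)$ with $\delta=1/\kappa$ --- and likewise concludes via Proposition~\ref{prp:coe}, Proposition~\ref{prp:preliminary inf sup} and Lemma~\ref{lem:norm-cont}. One small correction: no enlargement of $\bdmJ,\bdmJJ$ is needed (nor assumed in the theorem); since they are fixed constants, the negative multiples of $\normacip{\vv_h}^2$ from Proposition~\ref{prp:preliminary inf sup} are absorbed by the positive $\normacip{\vv_h}^2$ term of Proposition~\ref{prp:coe} simply by taking $\delta$ small.
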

\begin{proof}
The proof becomes standard in light of the previous derivations.
It is sufficient to combine Propositions \ref{prp:coe} and \ref{prp:preliminary inf sup}
taking $(\ww_h,\mathbf{K}_h) = \kappa (\vv_h,\BB_h) +  (0,\HH_h)$ for a sufficiently large $\kappa$, in addition to recalling Lemma \ref{lem:norm-cont}.
Here, $\HH_h$ is defined as in Proposition \ref{prp:preliminary inf sup}.
\end{proof}

\subsection{Error analysis}
Let $(\uu, p, \BB)$ and $(\uu_h, p_h, \BB_h)$ 
be the solutions of \eqref{eq:linear variazionale} and \eqref{eq:linear fem} respectively. 
We introduce the following notations that will be useful for the error analysis
\begin{equation}
\label{eq:fquantities}
\eei := \uu - \uui\,, \qquad 
\eeh := \uu_h - \uui\,, \qquad 
\EEi := \BB - \BBi\,, \qquad 
\EEh := \BB_h - \BBi \,,
\end{equation}
where $\uui$ and $\BBi$ are defined in Lemma \ref{lm:int-v} and Lemma \ref{lm:int-w}. We define also the following quantity
\begin{equation}
\label{eq:quantities}
\begin{aligned}
\ls^2 &:= \max \biggl\{ 
\pss h^2 \,,  \max_{f \in \EdgesI} \Vert  \cc \cdot \nn_f \Vert_{\boldsymbol{L}^\infty(f)} h \,,
\Vert  \TT \Vert_{\WW^1_\infty(\Omega_h)}^2  h \,,
\ns
\biggr\} \,.
\end{aligned}
\end{equation}
We begin the error analysis with the following proposition.
\begin{proposition}\label{prp:abstrac}
Let $(\uu, \BB) \in \ZZc \times \WWc$ and $(\uu_h,  \BB_h) \in \ZZd \times \WWd$ be the solutions of \eqref{eq:linear variazionale} and \eqref{eq:linear fem} respectively. 
It holds that
\[
\normastab{\uu - \uu_h} + \normam{\BB - \BB_h}
\lesssim
\Xi_\mathcal{I}
+
\Xi_h \, ,
\]
where the two error quantities are defined as
\[
\Xi_{\mathcal I} := \normastab{\eei} + \normam{\EEi} \, ,
\]
and
\[
\Xi_h :=
\sup_{(\ww_h,\mathbf{K}_h) \in \ZZd \times \WWd} \dfrac{\astab(\uu, \BB, \ww_h, \mathbf{K}_h) -
\astab(\uui, \BBi, \ww_h, \mathbf{K}_h) }{\normastab{\ww_h} + \normam{\mathbf{K}_h}} \, .
\]
\end{proposition}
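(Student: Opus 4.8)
The plan is to follow the classical Strang/Céa-type argument: split the total error through the interpolants, recognize the interpolation part as $\Xi_{\mathcal I}$, and reduce the remaining fully discrete part to the inf-sup condition of Theorem~\ref{th:cip-inf-sup} combined with the consistency identity \eqref{eq:consistency}. First I would apply the triangle inequality,
\[
\normastab{\uu - \uu_h} + \normam{\BB - \BB_h}
\le
\bigl(\normastab{\eei} + \normam{\EEi}\bigr)
+
\bigl(\normastab{\eeh} + \normam{\EEh}\bigr),
\]
using the notation $\eei = \uu - \uui$, $\eeh = \uu_h - \uui$, $\EEi = \BB - \BBi$, $\EEh = \BB_h - \BBi$. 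The first bracket is exactly $\Xi_{\mathcal I}$, so everything reduces to bounding the discrete contribution $\normastab{\eeh} + \normam{\EEh}$.

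The crucial preliminary observation is that $(\eeh, \EEh)$ is an admissible argument for \eqref{eq:infsup}: indeed $\uu_h \in \ZZd$ by construction and $\uui \in \ZZd$ by Lemma~\ref{lm:int-v}$(i)$ (since $\uu \in \ZZc$), whence $\eeh \in \ZZd$, while $\EEh \in \WWd$ trivially. Applying the inf-sup estimate \eqref{eq:infsup} to the pair $(\eeh, \EEh)$ therefore gives
\[
\normastab{\eeh} + \normam{\EEh}
\lesssim
\sup_{(\ww_h,\mathbf{K}_h) \in \ZZd \times \WWd}
\dfrac{\astab(\eeh, \EEh, \ww_h, \mathbf{K}_h)}{\normastab{\ww_h} + \normam{\mathbf{K}_h}}\, .
\]

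It then remains to rewrite the numerator. By linearity of $\astab(\cdot,\cdot,\ww_h,\mathbf{K}_h)$ in its first two slots I would write
\[
\astab(\eeh, \EEh, \ww_h, \mathbf{K}_h)
=
\astab(\uu_h - \uu, \BB_h - \BB, \ww_h, \mathbf{K}_h)
+
\astab(\uu - \uui, \BB - \BBi, \ww_h, \mathbf{K}_h)\, .
\]
The first term vanishes for every $(\ww_h,\mathbf{K}_h) \in \ZZd \times \WWd$ by the consistency identity \eqref{eq:consistency} (which is where assumption (RA1) is needed, so that the convective and CIP terms act meaningfully on the exact solution). What survives equals $\astab(\uu, \BB, \ww_h, \mathbf{K}_h) - \astab(\uui, \BBi, \ww_h, \mathbf{K}_h)$, i.e. precisely the numerator defining $\Xi_h$. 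Taking the supremum yields $\normastab{\eeh} + \normam{\EEh} \lesssim \Xi_h$, and combining with the triangle inequality of the first step closes the argument.

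There is no substantial obstacle in this proposition: it is an abstract consequence of stability plus Galerkin consistency, and the only points demanding care are the two structural ones just used, namely (i) that the kernel-preserving property of $\PVVd$ (Lemma~\ref{lm:int-v}$(i)$) places $\eeh$ in $\ZZd$, so that \eqref{eq:infsup} may legitimately be invoked, and (ii) that \eqref{eq:consistency} is available under (RA1). The genuine analytical effort — estimating $\Xi_{\mathcal I}$ and $\Xi_h$ in terms of the mesh size and the problem data, in a manner robust with respect to $\ns$ and $\nm$ — is precisely what the subsequent results address and does not enter here.
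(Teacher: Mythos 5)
Your proof is correct and follows essentially the same route as the paper: triangle inequality through the interpolants, the inf-sup bound of Theorem~\ref{th:cip-inf-sup} applied to $(\eeh,\EEh)$, and the consistency identity \eqref{eq:consistency} to convert the discrete numerator into the one defining $\Xi_h$. The only difference is that you spell out the (correct and worth noting) check that $\eeh\in\ZZd$ via Lemma~\ref{lm:int-v}$(i)$, which the paper leaves implicit.
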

\begin{proof}
Using triangular inequality, we have that
\[
\normastab{\uu - \uu_h} + \normam{\BB - \BB_h}
\leq
\normastab{\eei} + \normam{\EEi}
+
\normastab{\eeh} + \normam{\EEh} \, .
\]
Applying Theorem \ref{th:cip-inf-sup} and the consistency property \eqref{eq:consistency}, we obtain
\[
\begin{aligned}
\normastab{\eeh} + \normam{\EEh}
&\lesssim
\sup_{(\ww_h,\mathbf{K}_h) \in \ZZd \times \WWd} \dfrac{\astab(\eeh, \EEh, \ww_h, \mathbf{K}_h)}{\normastab{\ww_h} + \normam{\mathbf{K}_h}} \\
&\lesssim
\sup_{(\ww_h,\mathbf{K}_h) \in \ZZd \times \WWd} \dfrac{\astab(\uu_h, \BB_h, \ww_h, \mathbf{K}_h) -
\astab(\uui, \BBi, \ww_h, \mathbf{K}_h) }{\normastab{\ww_h} + \normam{\mathbf{K}_h}} \\
&\lesssim
\sup_{(\ww_h,\mathbf{K}_h) \in \ZZd \times \WWd} \dfrac{\astab(\uu, \BB, \ww_h, \mathbf{K}_h) -
\astab(\uui, \BBi, \ww_h, \mathbf{K}_h) }{\normastab{\ww_h} + \normam{\mathbf{K}_h}} \, .
\end{aligned}
\]
The proof is concluded by recalling the definitions of $\Xi_\mathcal I$ and $\Xi_h$.
\end{proof}

To properly estimate the rate of convergence of our method, we make the following stronger assumptions. Note that we allow for magnetic fields not in $H^1(\Omega)$, since we are considering also non-convex lipschitz domains.

\smallskip
\noindent
\textbf{(RA2) Regularity assumptions on the exact solution (error analysis):} 

\noindent
Assume that:
\begin{itemize}
\item the velocity field satisfies
$\uu \in \HH^{\regu}(\Omega_h)$ 
for some $\frac{3}{2} < \regu \leq k+1,$
\item the magnetic field satisfies
$\BB \in \HH^{\regb}(\bcurl,\Omega_h)$ 
for some $\frac{1}{2} < \regb \leq k+1.$ 
\end{itemize}

\begin{lemma}[Interpolation error]
Let Assumption \textbf{(MA1)} hold.
Furthermore, if $k=1$ let also Assumption \textbf{(MA2)} hold. Then, under the regularity assumption \textbf{(RA2)}, it holds that
\[
\Xi_\mathcal I^2 \lesssim
\ls^2 \, h^{2\regu-2} \vert \uu \vert_{\regu, \Omega_h}^2
+
\psm h^{2\regb}   |\BB|^2_{\HH^\regb(\bcurl,\Omega_h)} 
+
\ns h^{2\tilde \regb}|\bcurl(\BB)|^2_{\tilde\regb,\Omega_h}\, ,
\]
where $\tilde \regb := \min \{ r,k \}$, see Lemma \ref{lm:int-w}. 
\end{lemma}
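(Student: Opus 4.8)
The plan is to bound $\Xi_{\mathcal I}^2 = \normastab{\eei}^2 + \normam{\EEi}^2$ term by term, recalling from \eqref{eq:normeVW} that $\normastab{\eei}^2 = \normas{\eei}^2 + \normaupw{\eei}^2 + \normacip{\eei}^2 + \normacurl{\eei}^2$. The two interpolation errors $\eei = \uu - \uui$ and $\EEi = \BB - \BBi$ will be controlled respectively by the BDM interpolation estimate \eqref{eq:int-v} of Lemma~\ref{lm:int-v} and by the N\'ed\'elec estimates of Lemma~\ref{lm:int-w}; the various $h$-powers and mesh-dependent weights will then be collected into the single quantity $\ls^2$ from \eqref{eq:quantities}.

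First I would treat the magnetic contribution $\normam{\EEi}^2 = \psm\Vert\EEi\Vert^2 + \nm\Vert\bcurl(\EEi)\Vert^2$. Summing the first estimate of Lemma~\ref{lm:int-w} with $m=0$ over $E$ gives $\psm\Vert\EEi\Vert^2 \lesssim \psm h^{2\regb}|\BB|^2_{\HH^\regb(\bcurl,\Omega_h)}$, and summing the second estimate with $m=0$ gives $\nm\Vert\bcurl(\EEi)\Vert^2 \lesssim \nm h^{2\tilde\regb}|\bcurl(\BB)|^2_{\tilde\regb,\Omega_h}$. Since $\nm \le \ns$ is \emph{not} assumed, one must note that the stated target carries $\ns$ in front of the curl term; this is presumably because the interpolation operator $\PWWd$ is also used to measure curl-jumps elsewhere, so I would keep the factor as it appears in the thesis and simply verify the $h$-power matches $2\tilde\regb$.

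Next I would handle the symmetric-gradient energy and the face terms in $\normas{\eei}^2$ and $\normaupw{\eei}^2$. The bulk term $\pss\Vert\eei\Vert^2 + \ns\Vert\beps_h(\eei)\Vert^2$ follows directly from \eqref{eq:int-v} with $m=0$ and $m=1$, yielding contributions $\pss h^{2\regu}|\uu|^2_{\regu}$ and $\ns h^{2\regu-2}|\uu|^2_{\regu}$; note $\pss h^{2\regu} = (\pss h^2) h^{2\regu-2}$, so the weight $\pss h^2$ appears inside the max defining $\ls^2$. For the jump-type seminorms I would insert $\jump{\uu}_f = 0$ (valid since $\uu\in\HH^1$ is single-valued) so that $\jump{\eei}_f = -\jump{\uui}_f = \jump{\uu-\uui}_f$, then apply the trace inequality (Lemma~\ref{lm:trace}) to pass from face norms to element norms $h_E^{-1}\Vert\eei\Vert^2_E + h_E^{2\regu-3}|\uu|^2_{\regu,E}$, and finally \eqref{eq:int-v} again. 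The weight $h_f^{-1}$ in $\normas{\cdot}$ combines with the $\ns$ prefactor to reproduce the $\ns$-term, while the $\bdmc|\cc\cdot\nn_f|$ weight in $\normaupw{\cdot}$ produces the factor $\max_f\Vert\cc\cdot\nn_f\Vert_{\boldsymbol L^\infty(f)}\,h$ inside $\ls^2$.

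The main obstacle, and the step requiring the most care, is the curl-stabilization pair $\normacip{\eei}^2$ and $\normacurl{\eei}^2$, since these involve $\bcurl_h(\eei\times\TT_h)$ rather than plain derivatives of $\eei$. Here I would expand via the vector identity \eqref{eq:curl-identity}: because $\diver\,\TT_h = 0$ and $\diver\,\eei = 0$ (as $\uu,\uui\in\ZZc,\ZZd$), the expression reduces to first-order derivative terms bounded by $\Vert\TT\Vert_{\WW^1_\infty}\bigl(|\eei|_{1,E} + h_E^{-1}\Vert\eei\Vert_E\bigr)$ after absorbing $\TT_h$ via an inverse estimate (Lemma~\ref{lm:inverse}) and Remark~\ref{rem:thetaequiv}. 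Applying \eqref{eq:int-v} then gives $h_E^{2\regu-2}|\uu|^2_{\regu,E}$ scaled by $\Vert\TT\Vert^2_{\WW^1_\infty(\Omega_h)}$; for $\normacurl{\cdot}$ the weight $\gamma^{-1}h_E^2 \le h_E \le h$ supplies the remaining $h$, reproducing the factor $\Vert\TT\Vert^2_{\WW^1_\infty(\Omega_h)}h$ in $\ls^2$, while $\normacip{\cdot}$ is treated analogously through a trace inequality on the face jumps. Collecting all four contributions, the common factor $h^{2\regu-2}|\uu|^2_{\regu,\Omega_h}$ emerges multiplied by the maximum of the four weights, which is exactly $\ls^2$, completing the bound.
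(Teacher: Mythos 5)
Your proposal is correct and follows essentially the same route as the paper: the norm is split into its five contributions, the bulk and face terms for the velocity are handled by the BDM estimate \eqref{eq:int-v} plus trace inequalities, the curl-stabilization terms use identity \eqref{eq:curl-identity} together with $\diver\,\eei=0$ and the piecewise-constancy of $\TT_h$, and the magnetic part follows from Lemma~\ref{lm:int-w}. Your observation about the $\ns$ versus $\nm$ prefactor on the curl term is apt -- the paper's own proof in fact produces $\nm h^{2\tilde\regb}|\bcurl(\BB)|^2_{\tilde\regb,\Omega_h}$, so the $\ns$ in the statement appears to be a typo rather than something requiring the justification you hypothesize.
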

\begin{proof}
We recall that
\[
\Xi_\mathcal I = \normastab{\eei} + \normam{\EEi} \, .
\]
Exploiting the definition of $\normastab{\cdot}$ and following the same steps of \cite{BDV:2024}, we have that
\begin{align}
\label{eq:eiS}
\normas{\eei}^2 
& \lesssim
(\pss h^2 + \ns(1 + \bdma)) h^{2\regu-2} \vert \uu \vert_{\regu, \Omega_h}^2 \lesssim \ls^2 h^{2\regu-2} \vert \uu \vert_{\regu, \Omega_h}^2\,,
\\
\label{eq:eiupw}
\normaupw{\eei}^2 
& \lesssim
\bdmc \max_{f \in \EdgesI} \Vert \cc \cdot \nn_f \Vert_{\boldsymbol{L}^\infty(f)}
h^{2\regu-1} \vert \uu\vert_{\regu, \Omega_h}^2 \lesssim \ls^2 h^{2\regu-2} \vert \uu \vert_{\regu, \Omega_h}^2\, .
\end{align}
For the jump term in the norm, we have that
\[
\normacip{\eei}^2 =
\bdmJ\sum_{f \in \Edges}  \Vert \jump{\TT \times \eei}_f \Vert_{f}^2
+
\bdmJJ\sum_{f \in \EdgesI}  h_f^2 \Vert \jump{\bcurl_h(\eei\times \TT_h)}_f \Vert_{f}^2 \, .
\]
For the first term, using trace inequality and Lemma \ref{lm:int-w}, we obtain that
\begin{equation}\label{eq:eiJ1}
\begin{aligned}
\bdmJ \sum_{f \in \Edges} 
\|\jump{\TT \times \eei}_f \|^2_f  
&\lesssim
\bdmJ \sum_{f \in \Edges} \| \TT \|_{\boldsymbol{L}^\infty(f^\pm)}^2 \| \eei \|^2_f \\
&\lesssim
\bdmJ \sum_{f \in \Edges} \| \TT \|_{\boldsymbol{L}^\infty(f^\pm)}^2 \bigl(h_E^{-1}\| \eei \|^2_E + h_E| \eei |^2_{1,E} \bigr) \\
&\lesssim \bdmJ \max_{f \in \Edges}\| \TT \|_{\boldsymbol{L}^\infty(f^\pm)}^2 h^{2\regu-1} | \uu|^2_{\regu,\Omega_h} \lesssim \ls^2 h^{2\regu-2} | \uu|^2_{\regu,\Omega_h}\, .
\end{aligned}
\end{equation}
Similarly, on the other term, using \eqref{eq:curl-identity} together with the fact that $\TT_h$ is piecewise constant and $\diver (\eei)=0$, we obtain 
\begin{equation}\label{eq:eiJ2}
\begin{aligned}
\bdmJJ \sum_{f \in \EdgesI} 
h_f^2 \|\jump{\bcurl_h (\eei \times \TT_h)}_f\|^2
&\lesssim
\bdmJJ \sum_{f \in \EdgesI} h_f^2 \| \TT_h \|^2_{\boldsymbol{L}^{\infty}(f^\pm)} \| \nabla \eei \|^2_{\boldsymbol{L}^2(f^\pm)}
\lesssim
\ls^2 h^{2\regu-2}| \uu|^2_{\regu,\Omega_h} \, .   
\end{aligned}
\end{equation}
Combining \eqref{eq:eiJ1} and \eqref{eq:eiJ2}, we have
\[
\normacip{\eei}^2 \lesssim \ls^2 h^{2\regu-2} | \uu |_{\regu,\Omega_h}^2 \, .
\]
For $\normacurl{\eei}$, recalling \eqref{eq:curl-identity}, we have that
\[
\begin{aligned}
\normacurl{\eei}^2 &=
\gamma^{-1} \, \sum_{E \in \Omega_h} h_E^2 \, \| \bcurl_h(\eei \times \TT_h) \|^2_{E}
\lesssim
\gamma^{-1} \, \sum_{E \in \Omega_h} h_E^2 \| \TT \|_{L^{\infty}(\Omega_h)}^2 \| \nabla \eei \|^2_{E}\\
&\lesssim
h^{2\regu-1} \| \TT \|^2_{\boldsymbol{L}^\infty(\Omega_h)} | \uu |^2_{\regu,\Omega_h} \lesssim \ls^2 h^{2\regu-2} |\uu|_{\regu,\Omega_h}^2 \, .
\end{aligned}
\]
For the magnetic field, we have to use the interpolation estimate of Lemma \ref{lm:int-w}. It holds that
\[
\begin{aligned}
\normam{\EEi}^2  &\lesssim \sum_{E \in \Omega_h} \psm h^{2\regb}_E  |\BB|^2_{\HH^\regb(\bcurl,E)} 
+
\sum_{E \in \Omega_h}\nm h_E^{2\tilde \regb}|\bcurl(\BB)|^2_{\tilde\regb,E} \\
&\lesssim  \psm h^{2\regb}  |\BB|^2_{\HH^\regb(\bcurl,\Omega_h)} 
+
\nm h^{2\tilde \regb}|\bcurl(\BB)|^2_{\tilde\regb,\Omega_h}\, .
\end{aligned}
\]
\end{proof}

\begin{lemma}\label{lm:eh estimate} Let Assumption \textbf{(MA1)} hold.
Furthermore, if $k=1$ let also Assumption \textbf{(MA2)} hold. Then, under the regularity assumption  \textbf{(RA2)}. It holds that
\[
\begin{aligned}
\Xi_h^2
&\lesssim
(\ls^2 + \gs^2 + \gm^2) h^{2\regu-2} \vert \uu\vert^2_{\regu, \Omega_h}
 +
 \ns h^{2\tilde \regb}|\bcurl(\BB)|^2_{\tilde\regb,\Omega_h} \\
 & \quad +
(\psm h^{2\regb} + (\gs^2+\gm^2 +1 )
h^{2\regb-1} 
) \bigl |\BB|_{\HH^{\regb}(\bcurl,\Omega_h)}^2 
\, ,
\end{aligned}
\]
where
\begin{equation}
\label{eq:gamma}
\begin{aligned}
\gs^2 &:= \min\{ \pss^{-1} h^2,  \, \max\{\pss^{-1},\ns^{-1}\}h^4\} \Vert\cc\Vert_{\WW^1_\infty(\Omega_h)}^2 + \pss^{-1} h\Vert\TT\Vert_{\WW^1_\infty(\Omega_h)}^2\, ,\\
\gm^2 &:=
\min\{ \psm^{-1} h^2,  \, \nm^{-1} h^4 \} 
\Vert\TT\Vert_{\WW^1_\infty(\Omega_h)}^2 + \gamma h^{-1} \,.
\end{aligned}
\end{equation}
%
%
%
\end{lemma}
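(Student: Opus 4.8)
The plan is to exploit bilinearity: by the definition of $\astab$, the numerator defining $\Xi_h$ equals $\astab(\uu-\uui,\BB-\BBi,\ww_h,\mathbf{K}_h)=\astab(\eei,\EEi,\ww_h,\mathbf{K}_h)$, so it suffices to bound this quantity by the asserted right-hand side multiplied by $\normastab{\ww_h}+\normam{\mathbf{K}_h}$, then divide and take the supremum and square. I would expand $\astab(\eei,\EEi,\ww_h,\mathbf{K}_h)$ into its eight constituent forms — the reaction products $\pss(\eei,\ww_h)$ and $\psm(\EEi,\mathbf{K}_h)$, the diffusion forms $\ns\ash(\eei,\ww_h)$ and $\nm\am(\EEi,\mathbf{K}_h)$, the convective form $c_h(\eei,\ww_h)$, the coupling forms $-d(\EEi,\ww_h)$ and $d(\mathbf{K}_h,\eei)$, and the stabilization $J_h(\eei,\ww_h)$ — and estimate each separately. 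Throughout, the two essential devices are the orthogonality property \eqref{eq:orth-v} for $\eei$ (legitimate since $\uu\in\ZZc$), and the freedom to measure each discrete test quantity either through the reaction norm (paired with an inverse estimate) or through the diffusion norm; choosing the more favourable of the two is exactly what produces the $\min\{\cdot,\cdot\}$ structure in $\gs^2$ and $\gm^2$.

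I would first dispatch the symmetric terms. The reaction product $\pss(\eei,\ww_h)$ and the interior-penalty form $\ns\ash(\eei,\ww_h)$ are handled by Cauchy--Schwarz, the trace inequality (Lemma~\ref{lm:trace}) for the face contributions, a broken Korn inequality to control $\Vert\bnabla_h\ww_h\Vert\lesssim\ns^{-1/2}\normas{\ww_h}$, and the interpolation estimate \eqref{eq:int-v}; since both $\pss h^2$ and $\ns$ sit inside $\ls^2$, these produce the $\ls^2 h^{2\regu-2}\vert\uu\vert^2_{\regu,\Omega_h}$ contribution. Symmetrically, $\psm(\EEi,\mathbf{K}_h)$ gives the $\psm h^{2\regb}\vert\BB\vert^2$ term, while $\nm\am(\EEi,\mathbf{K}_h)$ is absorbed into $\gm^2 h^{2\regb-1}\vert\BB\vert^2$ using $\gamma\ge\nm$ (hence $\gm^2\gtrsim\nm h^{-1}$) together with $h^{2\tilde\regb}\le h^{2\regb-2}$. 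Finally $J_h(\eei,\ww_h)\le\normacip{\eei}\,\normacip{\ww_h}$, and the bound on $\normacip{\eei}$ from the preceding lemma again places this inside $\ls^2 h^{2\regu-2}\vert\uu\vert^2_{\regu,\Omega_h}$.

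The convective term is the first genuinely delicate point. After integrating the volume part $((\bnabla_h\eei)\cc,\ww_h)$ by parts (the zeroth-order contribution dropping since $\diver\cc=0$), I would split $\cc=\cc_0+(\cc-\cc_0)$ with $\cc_0$ the piecewise-constant $L^2$-projection of $\cc$: the $\cc_0$-contribution vanishes by \eqref{eq:orth-v}, because $(\bnabla_h\ww_h)\cc_0$ is piecewise of degree $k-1$, whereas the remainder is controlled by $\Vert\eei\Vert\,\Vert\bnabla_h\ww_h\Vert\,\Vert\cc-\cc_0\Vert_{\boldsymbol{L}^\infty}$ with $\Vert\cc-\cc_0\Vert_{\boldsymbol{L}^\infty}\lesssim h\Vert\cc\Vert_{\WW^1_\infty(\Omega_h)}$. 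Estimating $\Vert\bnabla_h\ww_h\Vert$ by $\min\{h^{-1}\pss^{-1/2},\ns^{-1/2}\}\normas{\ww_h}$ yields precisely the $\min\{\pss^{-1}h^2,\max\{\pss^{-1},\ns^{-1}\}h^4\}\Vert\cc\Vert^2_{\WW^1_\infty(\Omega_h)}$ factor of $\gs^2$, while the upwind face terms of $c_h$ are paired against the seminorm $\normaupw{\cdot}$, the interpolation error supplying the $\ls^2$-part.

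The coupling terms are the main obstacle and carry most of the work. For $d(\mathbf{K}_h,\eei)=(\bcurl\mathbf{K}_h\times\TT,\eei)$ I would again write $\TT=\TT_h+(\TT-\TT_h)$: the $\TT_h$-piece vanishes by \eqref{eq:orth-v} (as $\bcurl\mathbf{K}_h\times\TT_h$ is piecewise of degree $k-1$), and the remainder, bounded by $\Vert\bcurl\mathbf{K}_h\Vert\,\Vert\TT-\TT_h\Vert_{\boldsymbol{L}^\infty}\,\Vert\eei\Vert$ with $\Vert\bcurl\mathbf{K}_h\Vert\le\min\{h^{-1}\psm^{-1/2},\nm^{-1/2}\}\normam{\mathbf{K}_h}$, produces the $\min\{\psm^{-1}h^2,\nm^{-1}h^4\}\Vert\TT\Vert^2_{\WW^1_\infty(\Omega_h)}$ part of $\gm^2$. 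For $-d(\EEi,\ww_h)=-(\bcurl\EEi,\TT\times\ww_h)$ I would integrate by parts to move the curl onto the test side, so the volume part becomes $(\EEi,\bcurl_h(\TT\times\ww_h))$; invoking Remark~\ref{rem:thetaequiv} to pass from $\TT$ to $\TT_h$, the scaled quantity $\sum_{E}h_E^2\Vert\bcurl_h(\ww_h\times\TT_h)\Vert_E^2=\gamma\normacurl{\ww_h}^2$ enters, and combined with $\sum_{E}h_E^{-2}\Vert\EEi\Vert_E^2\lesssim h^{2\regb-2}\vert\BB\vert^2_{\HH^\regb(\bcurl,\Omega_h)}$ this is exactly what manufactures the critical $\gamma h^{-1}$ factor in $\gm^2$, the $\TT-\TT_h$ defect feeding the $\pss^{-1}h\Vert\TT\Vert^2_{\WW^1_\infty(\Omega_h)}$ part of $\gs^2$ and the remaining $\ns h^{2\tilde\regb}\vert\bcurl(\BB)\vert^2_{\tilde\regb,\Omega_h}$ term. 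The interelement boundary terms generated by this integration by parts are the subtlest part: they must be matched against $\normacip{\ww_h}$ through the trace inequality applied to $\EEi$, which is what accounts for the bare $+1$ in the coefficient $(\gs^2+\gm^2+1)$. Collecting all contributions, dividing by $\normastab{\ww_h}+\normam{\mathbf{K}_h}$, passing to the supremum and squaring gives the stated bound.
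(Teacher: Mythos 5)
Your proposal follows essentially the same route as the paper's proof: the same decomposition of $\astab(\eei,\EEi,\ww_h,\mathbf{K}_h)$ into its constituent forms, the same use of the BDM orthogonality \eqref{eq:orth-v} via the splittings $\cc=\cc_0+(\cc-\cc_0)$ and $\TT=\TT_h+(\TT-\TT_h)$, and the same integration by parts in the coupling terms, with the volume part of $-d(\EEi,\ww_h)$ paired against $\normacurl{\ww_h}$ (producing the $\gamma h^{-1}$ factor in $\gm^2$) and the face part against $\normacip{\ww_h}$ (producing the bare $+1$). The only slip is bookkeeping: the $h^{2\tilde\regb}|\bcurl(\BB)|^2_{\tilde\regb,\Omega_h}$ contribution arises directly from $\nm\,\am(\EEi,\mathbf{K}_h)$ and should simply be kept as a separate term (as the statement does), since your proposed absorption into $\gm^2 h^{2\regb-1}|\BB|^2_{\HH^\regb(\bcurl,\Omega_h)}$ fails when $\tilde\regb=k<\regb$ because the lower-order seminorm $|\bcurl(\BB)|_{\tilde\regb,\Omega_h}$ is not controlled by the order-$\regb$ seminorm $|\BB|_{\HH^\regb(\bcurl,\Omega_h)}$; this does not affect the validity of the final estimate.
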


\begin{proof}
We recall that by definition
\[
\begin{aligned}
\astab(\uu, \BB, \ww_h, \mathbf{K}_h) &-
\astab(\uui, \BBi, \ww_h, \mathbf{K}_h)\\
&\qquad=
\bigl( \pss (\eei, \ww_h) + \ns \ash(\eei, \ww_h) \bigr)
+
\bigl(\psm (\EEi, \mathbf{K}_h) + \nm \am(\EEi, \mathbf{K}_h)  \bigr) 
\\  
& \qquad
+ c_h(\eei, \ww_h) -d(\EEi, \ww_h)   + d(\mathbf{K}_h, \eei) + J_h(\eei, \ww_h) 
=: \sum_{i=1}^6 \alpha_i \,.
\end{aligned}
\]
We need to estimate each of these six terms. Some of these terms are estimated similarly to \cite{BDV:2024}, so we only state the result.

\noindent
$\bullet$ Estimate of $\alpha_1$ and $\alpha_3$: using the same calculations of \cite{BDV:2024}, we have that
\begin{align}
\label{eq:alpha13-f}
\alpha_1 
\lesssim \ls \, h^{\regu-1} \vert \uu \vert_{\regu, \Omega_h} \normastab{\ww_h}
\, , \qquad
\alpha_3 \lesssim
(\ls + \gs) h^{\regu-1} \vert \uu\vert_{\regu, \Omega_h} 
\normastab{\ww_h} \,.
\end{align}
%
%

\noindent
$\bullet$ Estimate of $\alpha_2$: Using the interpolation estimate of Lemma \ref{lm:int-w}, we have that
\begin{equation}
\begin{aligned}
\bigl(\psm (\EEi, \mathbf{K}_h) + \nm \am(\EEi, \mathbf{K}_h)  \bigr)  
&\leq 
\psm \| \EEi \| \|\mathbf{K}_h\| 
+
\nm \| \bcurl(\EEi) \| \, \|\bcurl(\mathbf{K}_h)\| \\
&\lesssim
\bigl(\psm^\frac{1}{2} h^{\regb} \bigl |\BB|_{\HH^{\regb}(\bcurl,\Omega_h)} 
+
\nm^\frac{1}{2} h^{\tilde \regb}  |\bcurl(\BB)|_{\tilde \regb,\Omega_h} \bigr)\normam{\mathbf{K}_h}
\, .
\end{aligned}
\end{equation}
\noindent
$\bullet$ Estimate of $\alpha_4$: 
Following \cite{BDV:2024}, it holds that by integration by parts
\begin{equation}
\label{eq:alpha-4}
\begin{aligned}
\alpha_4 & = 
(\EEi , \bcurl_h (\ww_h  \times (\TT - \TT_h))) +
(\EEi , \bcurl_h (\ww_h  \times \TT_h)) +
\\
& +
\sum_{f \in \Edges}
(\EEi \times \nn_f , \jump{\ww_h  \times \TT})_f 
=: \sum_{j=1}^3 \alpha_{4,j} \, .
\end{aligned}
\end{equation}
For the first term, mimicking the steps of \cite{BDV:2024} we have that
\begin{equation}
\label{eq:alpha4-1}
\begin{aligned}
\alpha_{4,1}
\lesssim
\gs
h^{\regb-\frac{1}{2}} \vert \BB \vert_{\HH^{\regb}(\bcurl,\Omega_h)} 
\normas{\ww_h} \, .
\end{aligned}
\end{equation}
Thanks to the definition of $\normacurl{\cdot}$, we obtain
\[
\begin{aligned}
\alpha_{4,2} = ( \EEi \,, \bcurl_h (\ww_h  \times \TT_h) )
&\lesssim
\left( \sum_{E \in \Omega_h} h_E^{-2} \| \EEi\|^2_E\right)^\frac12
\left( \sum_{E \in \Omega_h} h_E^{2} \| \bcurl_h (\ww_h  \times \TT_h)\|^2_E\right)^\frac12 \\
&\lesssim
\gamma^\frac12 \left( \sum_{E \in \Omega_h} h_E^{-2} \| \EEi\|^2_E\right)^\frac12
\normacurl{\ww_h} \\
& \lesssim \gamma^{\frac{1}{2}} h^{\regb-1} | \BB|_{\HH^r(\bcurl,\Omega_h)}\normacurl{\ww_h} \\
&\lesssim {\gm} h^{\regb-\frac12} | \BB |_{\HH^{\regb}(\bcurl,\Omega_h)} \normacurl{\ww_h} \, .
\end{aligned}
\]
Finally,
\begin{equation}
\label{eq:alpha4-3}
\begin{aligned}
\alpha_{4,3} & \leq
\biggl(\bdmJ^{-1} \sum_{f \in \Edges}
\Vert \EEi \Vert_{f}^2 \biggr)^{1/2} \normacip{\ww_h}
 \lesssim
\biggl(\bdmJ^{-1} \sum_{E \in \Omega_h}
(h_E^{-1} \Vert \EEi \Vert_{E}^2 + 
h_E^{2\hat\regb-1} \vert \EEi \vert_{\hat\regb,E}^2) \biggr)^{1/2} \normacip{\ww_h}
\\
& \lesssim
h^{\regb-\frac{1}{2}} |\BB|_{\HH^\regb(\bcurl,\Omega_h)} \normacip{\ww_h}
\lesssim
h^{\regb-\frac{1}{2}} \vert \BB \vert_{\HH^\regb(\bcurl,\Omega_h)}
\normacip{\ww_h}
\end{aligned}
\end{equation}
where $\hat\regb := \min\{\regb,1\}$.

\noindent
$\bullet$ Estimate of $\alpha_5$: Exploiting the orthogonality property of the interpolation operator, and standard approximation results, we obtain
\begin{equation}
\label{eq:alpha5-f}
\begin{aligned}
\alpha_5 & = 
(\bcurl (\mathbf K_h) \times \TT \,, \eei )
= (\bcurl (\mathbf K_h) \times (\TT-\TT_h) \,, \eei  )
\\
& \leq \sum_{E \in \Omega_h}
\Vert \TT - \TT_h \Vert_{\boldsymbol{L}^\infty(E)}
\Vert \bcurl (\mathbf K_h) \Vert_{E} 
\Vert \eei \Vert_{E}
\\
& \lesssim \sum_{E \in \Omega_h}
h_E \Vert \TT \Vert_{\WW^1_\infty(E)}
\Vert \bcurl (\mathbf K_h) \Vert_{E} 
\Vert \eei \Vert_{E}
\\
& \lesssim 
\Vert \TT \Vert_{\WW^1_\infty(\Omega_h)}
\Vert \eei \Vert
\biggl(\sum_{E \in \Omega_h}
h_E^2 
\Vert \bcurl (\mathbf K_h) \Vert_{E}^2 \biggr)^{1/2} 
\\
& \lesssim 
\min \{ \psm^{-1/2},  \nm^{-1/2} h\} 
\Vert \TT \Vert_{\WW^1_\infty(\Omega_h)}
h^{\regu} \vert \uu \vert_{\regu, \Omega_h}
\normam{\mathbf K_h} 
\\
& \lesssim 
\gm h^{\regu-1} \vert \uu \vert_{\regu, \Omega_h}
\normam{\mathbf K_h} \, .
\end{aligned}
\end{equation}

\noindent
$\bullet$ Estimate of $\alpha_6$: 
By definition, we have that
\[
\begin{aligned}
\alpha_6 &= \sum_{f \in \Edges} \bigl( \bdmJ 
( \jump{\TT \times \eei}_f , \jump{\TT \times \ww_h}_f \bigr)_f \\
& \qquad +
\bdmJJ h_f^2 
\bigl(\jump{\bcurl_h (\eei \times \TT)}_f  ,\jump{\bcurl_h (\ww_h \times \TT)}_f 
\bigr)_f =: \sum_{j=1}^2 \alpha_{6,j} \, .
\end{aligned}
\]
On the first one, using \eqref{eq:eiJ1}, we have that
\[
\begin{aligned}
\alpha_{6,1}
&=
\bdmJ\sum_{f \in \Edges} \bigl( \jump{\TT \times \eei}_f , \jump{\TT \times \ww_h}_f \bigr)_f \lesssim
\normacip{\eei} \normacip{\ww_h} \lesssim
\ls h^{\regu-1} |\uu|_{\regu,\Omega_h}\normastab{\ww_h} \, .
\end{aligned}
\]
Using the Cauchy-Schwarz and triangular inequalities, estimates in the same spirit of \eqref{eq:tt-tth}, and bound \eqref{eq:eiJ2}, we obtain (without showing all the details)
\[
\begin{aligned}
\alpha_{6,2}
&\leq
\bdmJJ \sum_{f \in \EdgesI} 
h_f^2 \bigl(\jump{\bcurl_h (\eei \times \TT)}_f  ,\jump{\bcurl_h (\ww_h \times \TT)}_f 
\bigr)_f \\
& \lesssim
\left(\bdmJJ \sum_{f \in \EdgesI} 
h_f^2 \bigl(\|\jump{\bcurl_h (\eei \times (\TT-\TT_h))} \|^2_f + \|\jump{\bcurl_h (\eei \times \TT_h)} \|^2_f\bigr) \right)^{\frac{1}{2}}\normastab{\ww_h} \\
& \lesssim
\left(\normacip{\eei}^2 
+ \sum_{E \in \Omega_h} \| \TT \|_{\WW^1_\infty(E)}^2  
\big( h_E \|\eei\|^2_E + h_E^{2\regu+1} |  \eei |^2_{\regu,E} \big)
\right)^{\frac{1}{2}}\normastab{\ww_h}\\
&\lesssim
\ls h^{\regu-1} |\uu|_{\regu,\Omega_h}\normastab{\ww_h} \, .
\end{aligned}
\]
Note that, differently from \eqref{eq:tt-tth} and since $\eei$ is not a discrete function, here above we applied a scaled trace inequality from $\HH^1(f)$ to $\HH^s(E)$, with $f$ face of element $E$ (we recall $s>3/2$ by assumption), instead of an inverse inequality. \\
%
\end{proof}
Collecting the previous result we obtain the final estimate.
\begin{theorem}\label{thm:convergence}
Let Assumption \textbf{(MA1)} hold. Furthermore, if $k=1$ let also Assumption \textbf{(MA2)} be valid. Then, under the regularity assumption  \textbf{(RA2)} it holds
\[
\begin{aligned}
\normastab{\uu - \uu_h}^2 + \normam{\BB - \BB_h}^2
&\lesssim
(\ls^2 + \gs^2 + \gm^2) h^{2\regu-2} \vert \uu\vert^2_{\regu, \Omega_h}
 +
 \ns h^{2\tilde \regb}|\bcurl(\BB)|^2_{\tilde\regb,\Omega_h} \\
 & \quad +
(\psm h^{2\regb} + (\gs^2+\gm^2+1 )
h^{2\regb-1}
) \bigl |\BB|_{\HH^{\regb}(\bcurl,\Omega_h)}^2 
\, .
\end{aligned}
\]
\end{theorem}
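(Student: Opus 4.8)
The plan is to assemble the final estimate directly from the three preparatory results already in place. First I would invoke the abstract error decomposition of Proposition \ref{prp:abstrac}, which bounds the total error $\normastab{\uu - \uu_h} + \normam{\BB - \BB_h}$ by the sum $\Xi_\mathcal{I} + \Xi_h$ of the interpolation error and the consistency/approximation term. Squaring this inequality through the elementary bound $(a+b)^2 \le 2a^2 + 2b^2$ reduces the task to controlling $\Xi_\mathcal{I}^2$ and $\Xi_h^2$ separately, and for both of these I already have explicit estimates.

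Next I would substitute the two available bounds: the interpolation error estimate for $\Xi_\mathcal{I}^2$ and Lemma \ref{lm:eh estimate} for $\Xi_h^2$. The key observation is that every term in the bound for $\Xi_\mathcal{I}^2$ — namely the velocity contribution $\ls^2 h^{2\regu-2}\vert\uu\vert_{\regu,\Omega_h}^2$, the magnetic term $\psm h^{2\regb}|\BB|^2_{\HH^\regb(\bcurl,\Omega_h)}$, and the curl term $\ns h^{2\tilde\regb}|\bcurl(\BB)|^2_{\tilde\regb,\Omega_h}$ — is already dominated, up to the hidden constant, by a corresponding term in the bound of Lemma \ref{lm:eh estimate} (recall that $\gs^2,\gm^2\ge 0$, so $\ls^2 \le \ls^2+\gs^2+\gm^2$, and that $\psm h^{2\regb}$ appears verbatim in the magnetic prefactor there). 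Consequently the combined right-hand side collapses to that of Lemma \ref{lm:eh estimate}, which is precisely the claimed estimate.

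There is essentially no obstacle here: the heavy analytical work has been carried out in the inf-sup condition of Theorem \ref{th:cip-inf-sup}, in the interpolation error lemma, and in Lemma \ref{lm:eh estimate}, so the present statement follows by a triangle inequality, an elementary squaring, and the absorption of the lower-order $\Xi_\mathcal{I}^2$ contributions into the $\Xi_h^2$ bound. The only point requiring a moment of care is the bookkeeping of the parameter-dependent prefactors $\ls^2$, $\gs^2$, $\gm^2$, $\psm$ and $\ns$, to confirm that combining the two estimates produces no term beyond those already listed in the thesis.
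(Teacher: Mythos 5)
Your proposal is correct and coincides with the paper's own (very short) argument: the theorem is obtained exactly by combining Proposition \ref{prp:abstrac} with the bounds on $\Xi_{\mathcal I}^2$ and $\Xi_h^2$ from the two preceding lemmas, squaring via $(a+b)^2\le 2a^2+2b^2$, and noting that every term in the interpolation bound is already dominated by a corresponding term in the bound of Lemma \ref{lm:eh estimate}. Your bookkeeping of the prefactors $\ls^2$, $\gs^2$, $\gm^2$, $\psm$, $\ns$ matches the stated right-hand side, so nothing further is needed.
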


We now study the error on the pressure variable. We first recall that, exploiting the inf-sup condition for the $\boldsymbol{{\rm BDM}}$ elements and the Poincarè inequality, there exists $\ww_h \in \VVd$ such that
\begin{equation}
\label{eq:p-infsup}
\Vert \ww_h \Vert_{1,h} \lesssim 1 \,, 
\qquad \text{and} \qquad
\Vert p_h - \Pi_{k-1} p\Vert 
\lesssim b(\ww_h \,, p_h - \Pi_{k-1} p) \,.
\end{equation}
where the discrete norm is defined by:
$$
\Vert \ww_h \Vert_{1,h}^2 := \sum_{E \in \Omega_h} \Vert \ww_h \Vert_{1,E} 
+ \sum_{f \in \Edges}  h_f^{-1} \Vert \jump{\ww_h}_f \Vert_{f}^2 \, .
$$
Furthermore, thank to the inclusion $\diver(\VVd) \subseteq \mathbb P_{k-1}(\Omega_h)$, it also holds
\begin{equation}
\label{eq:p-orth}
b(\vv_h \,, p) =
b(\vv_h \,, \Pi_{k-1} p)
\qquad 
\text{for all $\vv_h \in \VVd$.}
\end{equation}
We are now ready to prove the following error estimates.
\begin{theorem}[Error estimates for the pressure]
\label{thm:conv pre}
Under the same assumptions as in Theorem~\ref{thm:convergence}, we have that
\begin{equation}\label{p-est-first}
\begin{aligned}
\Vert p - p_h \Vert
&\lesssim 
(\ts + \ls + \Vert \cc \Vert_{\boldsymbol{L}^\infty(\Omega)} \pss^{-1/2} )
\normastab{\uu - \uu_h}   +
\nm^{-\frac{1}{2}} \normam{\BB - \BB_h}
+ 
h^k \, \vert p \vert_{k, \Omega_h} \, , 
\end{aligned}
\end{equation}
where $\ts^2 := \max\{\ns, \pss\}.$
Furthermore, assuming that
\begin{equation}\label{eq:jump-assumption}
\Vert \jump{\TT} \Vert_{\boldsymbol{L}^\infty(f)}\lesssim h_f \Vert \TT \Vert_{\WW^1_\infty(\omega_f)} \quad \forall f \in \Edges \, ,
\end{equation}
with $\omega_f$ denoting the union of the elements sharing the face $f$, the following improved estimate (avoiding the term $\nm^{-\frac{1}{2}}$) holds true:
\begin{equation}\label{p-est-second}    
\begin{aligned}
\Vert p - p_h \Vert
&\lesssim 
(\ts + \ls + \Vert \cc \Vert_{\boldsymbol{L}^\infty(\Omega)} \pss^{-1/2} )
\normastab{\uu - \uu_h} + h^k \, \vert p \vert_{k, \Omega_h} 
 \\
& 
 + \Vert \TT \Vert_{\WW^1_\infty(\Omega_h)} \big( \psm^{-1/2}\normam{\BB - \BB_h} + h^{r} \Vert \BB \Vert_{\HH^{r}(\bcurl,\Omega)} \big)  \, .
\end{aligned}
\end{equation}
\end{theorem}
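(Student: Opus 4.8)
The plan is to follow the standard route for pressure estimates in mixed/DG methods, reducing everything to the already-established stability and consistency. First I would split, via the triangle inequality, $\Vert p - p_h\Vert\le\Vert p-\Pi_{k-1}p\Vert+\Vert\Pi_{k-1}p-p_h\Vert$, and bound the projection part by Lemma~\ref{lm:bramble} to obtain the term $h^k|p|_{k,\Omega_h}$. For the fully discrete part I would use the BDM inf-sup property \eqref{eq:p-infsup}, producing $\ww_h\in\VVd$ with $\Vert\ww_h\Vert_{1,h}\lesssim1$ and $\Vert\Pi_{k-1}p-p_h\Vert\lesssim b(\ww_h,\Pi_{k-1}p-p_h)$.

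The central identity is obtained by testing the discrete momentum equation in \eqref{eq:linear fem} and the (strongly consistent) continuous one against $\ww_h$, subtracting, and using the orthogonality \eqref{eq:p-orth} $b(\ww_h,p)=b(\ww_h,\Pi_{k-1}p)$; this yields $b(\ww_h,\Pi_{k-1}p-p_h)=\astab(\uu-\uu_h,\BB-\BB_h,\ww_h,0)$. It then remains to bound each contribution of $\astab(\cdot,\cdot,\ww_h,0)$ by the error norms times $\Vert\ww_h\Vert_{1,h}\lesssim1$. The reaction term and the symmetric-gradient DG form are routine: using $\pss^{1/2}\Vert\cdot\Vert,\ns^{1/2}\Vert\beps_h(\cdot)\Vert\le\normas{\cdot}$ together with DG continuity of $\ash$, they generate the factor $\ts=\max\{\ns^{1/2},\pss^{1/2}\}$. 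The stabilization $J_h(\uu-\uu_h,\ww_h)$ and the penalty part of $c_h$ are controlled by Cauchy--Schwarz, Remark~\ref{rem:thetaequiv}, and discrete trace/inverse estimates, which show $\normacip{\ww_h}\lesssim\ls$ and $\normaupw{\ww_h}\lesssim\ls$, producing the factor $\ls$.

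The convective form requires a small trick to achieve the scaling $\pss^{-1/2}$ (rather than $\ns^{-1/2}$) on its leading part. I would integrate $((\bnabla_h(\uu-\uu_h))\cc,\ww_h)$ by parts element-wise, exploiting $\diver\cc=0$ and $\cc\cdot\nn=0$ on $\partial\Omega$; splitting the resulting interface contribution through $\jump{ab}=\media{a}\jump{b}+\jump{a}\media{b}$, the piece $\sum_f((\cc\cdot\nn_f)\jump{\uu-\uu_h},\media{\ww_h})_f$ cancels exactly the upwind consistency term of $c_h$, leaving the volume term $-(\uu-\uu_h,(\bnabla_h\ww_h)\cc)$, bounded by $\Vert\cc\Vert_{\boldsymbol{L}^\infty(\Omega)}\pss^{-1/2}\normas{\uu-\uu_h}$, and the residual face term $\sum_f(\cc\cdot\nn_f)\media{\uu-\uu_h}\cdot\jump{\ww_h}$, bounded by $\ls\normastab{\uu-\uu_h}$ via the trace inequality and the definition of $\ls$. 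This accounts for the coefficient $(\ls+\Vert\cc\Vert_{\boldsymbol{L}^\infty(\Omega)}\pss^{-1/2})$.

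The coupling term $d(\BB-\BB_h,\ww_h)=(\bcurl(\BB-\BB_h)\times\TT,\ww_h)$ is where the two estimates part ways and where the main difficulty lies. For \eqref{p-est-first} a direct Cauchy--Schwarz bound together with $\nm^{1/2}\Vert\bcurl(\BB-\BB_h)\Vert\le\normam{\BB-\BB_h}$ gives the $\nm^{-1/2}$-weighted term. To remove the $\nm^{-1/2}$ blow-up in \eqref{p-est-second}, I would integrate by parts as in \eqref{eq:d-form_h}: the volume contribution $(\BB-\BB_h,\bcurl_h(\TT\times\ww_h))$ is estimated, via the identity \eqref{eq:curl-identity}, $\diver\TT=0$, and $\Vert\ww_h\Vert_{1,h}\lesssim1$, by $\Vert\TT\Vert_{\WW^1_\infty(\Omega_h)}\psm^{-1/2}\normam{\BB-\BB_h}$. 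The hard part is the interface term $\sum_E((\BB-\BB_h)\times\nn_E,\TT\times\ww_h)_{\partial E}$: the tangential continuity of both $\BB$ and the N\'ed\'elec field $\BB_h$ makes $(\BB-\BB_h)\times\nn_f$ single-valued, so only $\jump{\TT\times\ww_h}$ contributes, and assumption \eqref{eq:jump-assumption} is precisely what lets one treat the $\jump{\TT}$ part as higher order; since $\BB$ need not lie in $\HH^1$, one must split $\BB-\BB_h=(\BB-\PWWd\BB)+(\PWWd\BB-\BB_h)$, bounding the discrete part against $\normacip{\ww_h}$ by trace/inverse estimates and the interpolation part by Lemma~\ref{lm:int-w}, which is exactly what produces the extra term $h^{r}\Vert\BB\Vert_{\HH^{r}(\bcurl,\Omega)}$. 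Collecting all bounds and recalling that $\normastab{\uu-\uu_h}$ and $\normam{\BB-\BB_h}$ are already controlled by Theorem~\ref{thm:convergence} concludes the argument.
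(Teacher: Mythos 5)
Your proposal follows essentially the same route as the paper: Bramble--Hilbert for the projection error, the BDM inf-sup \eqref{eq:p-infsup} with orthogonality \eqref{eq:p-orth} to reduce to the error equation, term-by-term bounds yielding $\ts$, $\ls$ and $\Vert\cc\Vert_{\boldsymbol{L}^\infty(\Omega)}\pss^{-1/2}$, a direct Cauchy--Schwarz on $d(\BB-\BB_h,\ww_h)$ for \eqref{p-est-first}, and for \eqref{p-est-second} the same integration by parts with the splitting \eqref{jump-ave} and assumption \eqref{eq:jump-assumption}, followed by adding and subtracting the N\'ed\'elec interpolant. The only minor imprecision is that the discrete part $\BBi-\BB_h$ of the face term is bounded in the paper through its $\boldsymbol{L}^2$ norm (hence $\psm^{-1/2}\normam{\BB-\BB_h}$), not against $\normacip{\ww_h}$, but this does not affect the final estimate you state.
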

\begin{proof}
We only sketch the proof since it follows standard arguments in this type of problems and analogous steps with respect to \cite{BDV:2021}. We first recall that from Lemma \ref{lm:bramble} it holds
\begin{equation}\label{eq:proj-err}
\| p - \Pi_{k-1} p \|
\lesssim
h^{k} | p |_{k,\Omega_h} \, .
\end{equation}
Exploiting estimates \eqref{eq:p-infsup} and equation \eqref{eq:p-orth} in Problems \eqref{eq:linear variazionale} and \eqref{eq:linear fem}, we obtain
\begin{equation}
\label{eq:p-error}
\begin{aligned}
\Vert p_h -  \Pi_{k-1} p\Vert  &
\lesssim
b(\ww_h \,, p_h - \Pi_{k-1} p)
=
b(\ww_h \,, p_h - p)
\\
& =
\bigl( \pss(\uu - \uu_h, \ww_h) + \ns \ash(\uu - \uu_h, \ww_h) \bigr)+
c_h(\uu - \uu_h, \ww_h) + \\  
&
-d(\BB - \BB_h, \ww_h)    + J_h(\uu - \uu_h, \ww_h)
 =: \sum_{i=1}^4 \beta_i \,.
\end{aligned}
\end{equation}
Following Lemma \ref{lm:eh estimate} and \cite{BDV:2024}, we infer
\begin{equation}
\label{eq:beta}
\begin{aligned}
\beta_1 & \lesssim \ts 
\normastab{\uu - \uu_h} \,,
\\
\beta_2 & \lesssim (\ls 
+ 
\Vert \cc \Vert_{\boldsymbol{L}^\infty(\Omega)} \pss^{-1/2}
)\normastab{\uu - \uu_h} \,,
\\
\beta_4 & \lesssim 
\ls
\normastab{\uu - \uu_h} \, .
\end{aligned}
\end{equation}
The term $\beta_3$ can be directly estimated as
\begin{equation}\label{eq:beta3_plain}
\beta_3 
= 
( \bcurl ((\BB-\BB_h)) \times \TT ,\, \ww_h)_\Omega
\lesssim
\nm^{-\frac{1}{2}} \normam{\BB - \BB_h} \, .
\end{equation}
Therefore, estimate \eqref{p-est-first} follows from \eqref{eq:proj-err}, \eqref{eq:p-error}, \eqref{eq:beta} and \eqref{eq:beta3_plain}.
%
%
To prove estimate \eqref{p-est-second}, we treat the term $\beta_3$ in a different way, as we show briefly here below. We start by noting that
\begin{equation}\label{jump-ave}
\jump{\ww_h \times \TT}_f
=
\jump{\ww_h}_f \times \media{\TT}_f
+
\media{\ww_h}_f \times \jump{\TT}_f \, .
\end{equation}
After integrating by parts and applying \eqref{jump-ave} we obtain 
$$
\begin{aligned}
\beta_3 & = \sum_{E \in \Omega_h} (\BB-\BB_h, \, \bcurl(\ww_h \times \TT))_E  
+ \sum_{f \in \Edges} ((\BB-\BB_h)\times \nn_f, \, \jump{\ww_h}_f \times \media{\TT}_f)_f \\
& + \sum_{f \in \Edges} ((\BB-\BB_h)\times \nn_f, \, \media{\ww_h}_f \times \jump{\TT}_f)_f 
=: \sum_{i=1}^3 \beta_{3,i} \, .
\end{aligned}
$$
We estimate the three terms above using H\"older and trace inequalities. The first two addenda are handled easily, using $\TT \in \WW^1_\infty(\Omega_h)$ and $\Vert \ww_h \Vert_{1,h} \lesssim 1$ (thus we avoid showing the details). The third term is bounded by 
$$
\beta_{3,3} \leq \sum_{f \in \Edges} h_f^{1/2} \Vert \BB-\BB_h \Vert_{\boldsymbol{L}^2(f^\pm)} h_f^{1/2}
\Vert \ww_h \Vert_{\boldsymbol{L}^2(f^\pm)} h_f^{-1} \Vert \jump{\TT} \Vert_{\boldsymbol{L}^\infty(f)} \, ,
$$
which by trace inequalities, using \eqref{eq:jump-assumption} and again $\Vert \ww_h \Vert_{1,h} \lesssim 1$ yields
$$
\beta_{3,3} \lesssim \Vert \TT \Vert_{\WW^1_\infty(\Omega_h)} \left( 
\sum_{E \in \Omega_h} \Vert \BB-\BB_h \Vert_E + h_E^2 | \BB-\BB_h |_{1,E}^2
\right)^{1/2} \, .
$$
We bound the term here above by first adding and subtracting $\BBi$, then using a triangle inequality, finally applying approximation estimates for $\BB - \BBi$ and inverse inequalities for $\BBi - \BB_h$. Such calculations lead to 
$$
\begin{aligned}
\beta_{3,3} 
&\lesssim 
\Vert \TT \Vert_{\WW^1_\infty(\Omega_h)} 
\big(  \|\BBi - \BB_h\| + h^r \Vert \BB \Vert_{\HH^r(\bcurl,\Omega)} \big) \\
& \lesssim
\Vert \TT \Vert_{\WW^1_\infty(\Omega_h)} 
\big(  \|\BB - \BB_h\|
+
\|\BB - \BBi\| + h^r \Vert \BB \Vert_{\HH^r(\bcurl,\Omega)} \big) \\
& \lesssim
\Vert \TT \Vert_{\WW^1_\infty(\Omega_h)} 
\big(  \psm^{-\frac{1}{2}}\normam{\BB - \BB_h}+ h^r \Vert \BB \Vert_{\HH^r(\bcurl,\Omega)} \big)  \, .
\end{aligned}
$$
%
Estimate \eqref{p-est-second} now follows from \eqref{eq:proj-err}, \eqref{eq:p-error}, \eqref{eq:beta} and the above bounds for $\beta_3$.

\end{proof}

\begin{remark}
Assumption \eqref{eq:jump-assumption} means that $\TT$ does not jump too much across faces, which makes sense if we consider $\TT$ as a discrete approximation of a regular function. Obviously, this hypothesis is satisfied if $\TT$ is a continuous function.    
\end{remark}

For completeness, we close this section with the following convergence result without requirements on the analytical solution regularity.
\begin{proposition} 
Let Assumption (MA1) hold. Let $\{{\bf u}_h, p_h, {\bf B}_h\}_h$ denote a sequence of solutions of the discrete problem \eqref{eq:linear fem} with mesh parameter $h$ tending to zero, and let $({\bf u}, p, {\bf B}) \in \VVc \times \Qc \times \WWc$ be the solution of the continuous problem \eqref{eq:linear variazionale}. Then, for vanishing $h$, it holds (for any $2 \leq p < 6$)
$$
\uu_h \to \uu \quad \textrm{in } L^p(\Omega), \qquad p_h \rightharpoonup p \quad \textrm{weakly in } L^2(\Omega) \, ,
$$
$$
\BB_h \to \BB \quad \textrm{in } L^2(\Omega), \qquad \BB_h \rightharpoonup \BB \quad \textrm{weakly in } \HH(\bcurl, \Omega) \, .
$$
\end{proposition}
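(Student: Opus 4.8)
The plan is to argue by compactness, exploiting the fact that the model parameters $\pss,\psm,\ns,\nm,\cc,\TT$ are held fixed while only $h\to 0$, so that the (otherwise parameter-dependent) stability estimates provide genuinely uniform bounds. First I would test the pressure-independent formulation \eqref{eq:Astabprob} with $(\vv_h,\HH_h)=(\uu_h,\BB_h)$ and use the coercivity of Proposition~\ref{prp:coe} to obtain
\[
\ccoe\bigl(\normas{\uu_h}^2+\normaupw{\uu_h}^2+\normacip{\uu_h}^2+\normam{\BB_h}^2\bigr)
\le (\ff,\uu_h)+(\GG,\BB_h)
\lesssim \pss^{-1/2}\|\ff\|\,\normas{\uu_h}+\psm^{-1/2}\|\GG\|\,\normam{\BB_h}\,,
\]
which bounds $\|\uu_h\|$, $\|\beps_h(\uu_h)\|$, the jump seminorms and $\|\BB_h\|,\|\bcurl(\BB_h)\|$ uniformly in $h$; a discrete Korn inequality upgrades this to a uniform bound on the full broken gradient $\|\bnabla_h\uu_h\|$. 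A uniform bound on $\|p_h\|$ then follows by testing the momentum equation of \eqref{eq:linear fem} with the function supplied by the BDM inf-sup condition \eqref{eq:p-infsup}.

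With these bounds in hand, I would extract a (non-relabelled) subsequence such that $p_h\rightharpoonup p^\ast$ weakly in $L^2(\Omega)$ and $\BB_h\rightharpoonup\BB^\ast$ weakly in $\HH(\bcurl,\Omega)$. For the velocity, the uniform broken-$H^1$ bound together with the penalization $\sum_f h_f^{-1}\|\jump{\uu_h}_f\|_f^2$ forces the jumps to disappear in the limit, so the broken Rellich--Kondrachov compactness theorem (see, e.g., \cite{ernguer-book-1}) gives $\uu_h\to\uu^\ast$ strongly in $\boldsymbol L^p(\Omega)$ for all $2\le p<6$, with $\uu^\ast\in\Hunozero$, $\bnabla_h\uu_h\rightharpoonup\bnabla\uu^\ast$ and $\diver\uu^\ast=0$, i.e. $\uu^\ast\in\ZZc$.

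The delicate point is the strong $\boldsymbol L^2$ convergence of $\BB_h$: since $\HH(\bcurl,\Omega)$ does not embed compactly into $\boldsymbol L^2(\Omega)$, weak convergence together with the $\bcurl$-bound is not enough. Here I would invoke the discrete divergence constraint of Remark~\ref{rem:Bh:kernel} and the classical \emph{discrete compactness property} of the second-kind N\'ed\'elec spaces (see, e.g., \cite{boffi-brezzi-fortin:book}), which precisely guarantees that a $\HH(\bcurl)$-bounded, discretely divergence-free sequence is relatively compact in $\boldsymbol L^2(\Omega)$; this yields $\BB_h\to\BB^\ast$ strongly in $\boldsymbol L^2(\Omega)$, and passing to the limit in the constraint shows that $\BB^\ast$ is $\boldsymbol L^2$-orthogonal to the gradients. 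I expect this to be the main obstacle of the proof.

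Finally I would pass to the limit in \eqref{eq:linear fem}. Fixing $(\vv,\HH)\in\ZZc\times\WWc$ and choosing $\vv_h=\PVVd\vv$, $\HH_h=\PWWd\HH$, all the stabilizing contributions ($J_h$, the DG face terms of $\ash$ and $c_h$, the upwind term) are controlled by a uniformly bounded seminorm of $\uu_h$ times a seminorm of the interpolants that vanishes as $h\to0$, hence disappear; every remaining term is a product of a weakly convergent and a strongly convergent factor (e.g. $((\bnabla_h\uu_h)\,\cc,\vv_h)\to((\bnabla\uu^\ast)\,\cc,\vv)$, $(\bcurl(\BB_h)\times\TT,\vv_h)\to(\bcurl(\BB^\ast)\times\TT,\vv)$, and $(\bcurl(\HH_h)\times\TT,\uu_h)\to(\bcurl(\HH)\times\TT,\uu^\ast)$, using $\cc,\TT\in\boldsymbol L^3(\Omega)$ and the strong convergences above). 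This shows that $(\uu^\ast,\BB^\ast)$ solves the kernel form of \eqref{eq:linear variazionale}, and recovering $p^\ast$ from the inf-sup condition identifies $(\uu^\ast,p^\ast,\BB^\ast)$ as a solution of \eqref{eq:linear variazionale}. By well-posedness the limit equals $(\uu,p,\BB)$; since the same limit is obtained along every subsequence, the full sequence converges, giving the claim.
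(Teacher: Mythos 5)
Your proposal is correct and follows essentially the same route as the paper, which simply refers to the compactness argument of Proposition 5.4 in \cite{da2023robust} and singles out as the only new ingredient exactly the point you flag as the main obstacle: the strong $\boldsymbol L^2(\Omega)$ convergence of $\BB_h$ via the discrete compactness property of the N\'ed\'elec spaces (\cite{boffi2001note,boffi2000fortin}), made applicable by the discrete divergence-free condition of Remark~\ref{rem:Bh:kernel}. The only cosmetic refinement worth recording is that the passage to the limit should be carried out against test functions in a dense smooth subspace of $\ZZc\times\WWc$ (so that the interpolants $\PVVd\vv$, $\PWWd\HH$ are well defined and the face/stabilization terms indeed vanish), and then extended by density.
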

\begin{proof}
The proof of this result very closely mimics the one of Proposition 5.4 in \cite{da2023robust}. Thus, we only underline the main difference: in order to handle the convergence of the magnetic field $\BB_h$, instead of using the compact inclusion of $\HH^1(\Omega)$ into $\boldsymbol L^p(\Omega)$, $2 \leq p < 6$, we use the discrete compactness result in \cite{boffi2001note,boffi2000fortin}, which holds also thanks to Remark \ref{rem:Bh:kernel}. 
\end{proof}

\section{Numerical results}
\label{sec:numerical}
In this section, we develop a set of numerical tests for our method, with the aim of comparing the actual converge rates with the rates expected by the theory. Furthermore, we compare our approach with the one proposed in \cite{BDV:2024} in a non-convex domain.
The proposed space--time method has been implemented taking inspiration from the \texttt{C++} library \texttt{Vem++}~\cite{Dassi:2023:VAC}.

In the first test we consider a cubic domain $\Omega = [0,1]^3$, and in the second test an extruded L-shaped domain $\Omega = [-1,1]^3 \setminus ([-1,0)^2 \times [-1,1])$. In all cases we use a family of unstructured tetrahedral meshes of characteristic size $h$.
Figure \ref{fig:meshes} illustrates two sample meshes, one for each domain.

\begin{figure}[!htb]
\begin{center}
\begin{tabular}{ccc}
\texttt{(a) Convex domain} & &\texttt{(b) Non-convex domain} \\
\includegraphics[width=0.35\textwidth]{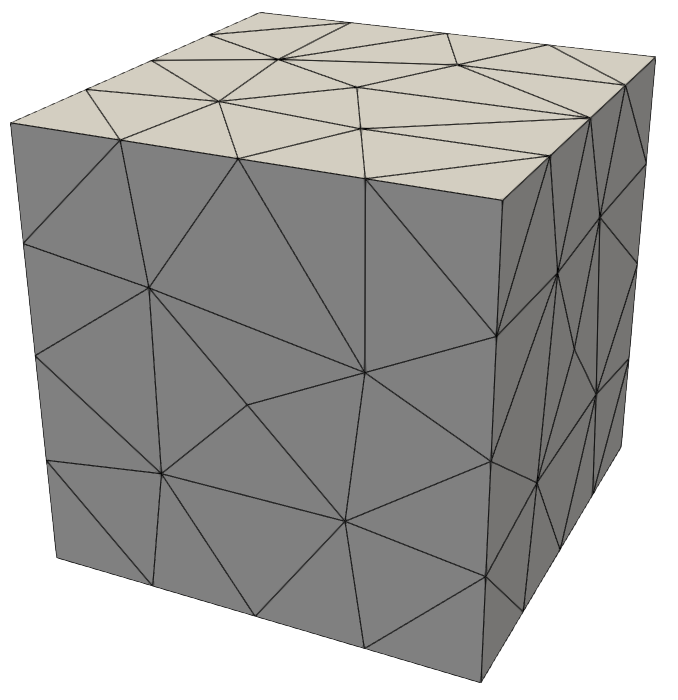} &\phantom{mm}&
\includegraphics[width=0.37\textwidth]{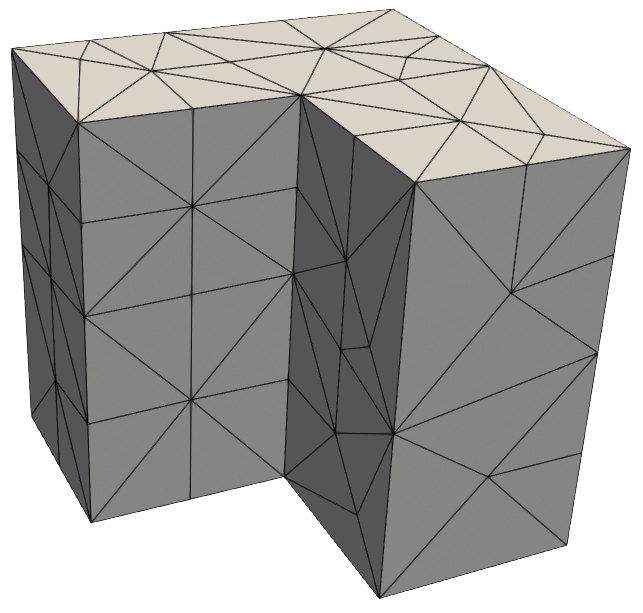} \\
\end{tabular}
\end{center}
\caption{Example of meshes used for the tests (Unit cube and L-shaped domain).}
\label{fig:meshes}
\end{figure}

We consider the following error quantities:
\begin{itemize}
    \item $H^1(\Omega)$-seminorm of the velocity field;
    \item $H(\bcurl,\Omega)$-seminorm of the magnetic field;
    \item $L^2(\Omega)$ norm of the pressure;
    \item a "total" norm given by
    \[
    \begin{aligned}
    \|(\uu-\uu_h,\BB-\BB_h) \|_{\text{total}} &:= \pss \| \uu - \uu_h \| + \ns \|\bnabla(\uu - \uu_h) \| \\
    & +\psm \| \BB - \BB_h \| + \nm \|\bcurl(\BB - \BB_h) \| \\
    &
    +
    \bdmJ\sum_{f \in \Edges}  \Vert \jump{\TT \times (\uu-\uu_h)}_f \Vert_{f}^2 \\
    &+
    \bdmJJ\sum_{f \in \EdgesI}  h_f^2 \Vert \jump{\bcurl_h((\uu-\uu_h)\times \TT)}_f \Vert_{f}^2 \, .
    \end{aligned}
    \]
\end{itemize}

\paragraph{Test 1: convergence study for a regular case.} 
In this paragraph, we analyze the convergence of the error with respect to the parameter $h$. 
We consider the convex domain $\Omega = [0,1]^3$ and the polynomial degrees $k=1,2$.
We design a problem for which the fields
\[
\uu(x,y,z)
:=
\begin{bmatrix}
\sin(\pi x) \cos(\pi y) \cos(\pi z)\\
\cos(\pi x) \sin(\pi y) \cos(\pi z)\\
-2 \cos(\pi x) \cos(\pi y) \sin(\pi z)
\end{bmatrix} ,
\qquad
\BB(x,y,z) 
:=
\begin{bmatrix}
\sin(\pi y) \\
\sin(\pi z) \\
\sin(\pi x)
\end{bmatrix},
\]
\[
p(x,y,z) = \sin(\pi x) + \sin(\pi y) - 2 \sin(\pi z)
\]
are solutions of \eqref{eq:linear variazionale},
where the functions $\cc$ and $\TT$ are set to
\[
\cc = \uu, \qquad \TT = \BB .
\]
Two choices for the parameters $\nu = \nm = \ns$ are considered, while the reaction terms are always set to $\sigma = \pss = \psm = 1$.
In particular, we consider a diffusion-dominated case corresponding to $\nu = 1$, and a convection-dominated case corresponding to $\nu = 10^{-6}$.
Finally, we set $\bdmJ = 0.05$, $\bdmJJ = 0.01$ and $\mu_a = 10$ or $\mu_a = 20$ depending on $k=1$ or $k=2$.
In Figure \ref{fig:ex1_nu1} and Figure \ref{fig:ex1_nu1e-6}, the numerical results are shown. 
We can observe that all the quantities converge at least with the expected order, see Theorems \ref{thm:convergence} and \ref{thm:conv pre}, and that the scheme is clearly robust with respect to the parameter $\nu$. In particular, the total error exhibits (at least) the expected $h^{1/2}$ gain in the pre-asymptotic reduction rate in the convection dominated case.

\begin{figure}[!htb] 
\begin{center}
\includegraphics[width=\textwidth]{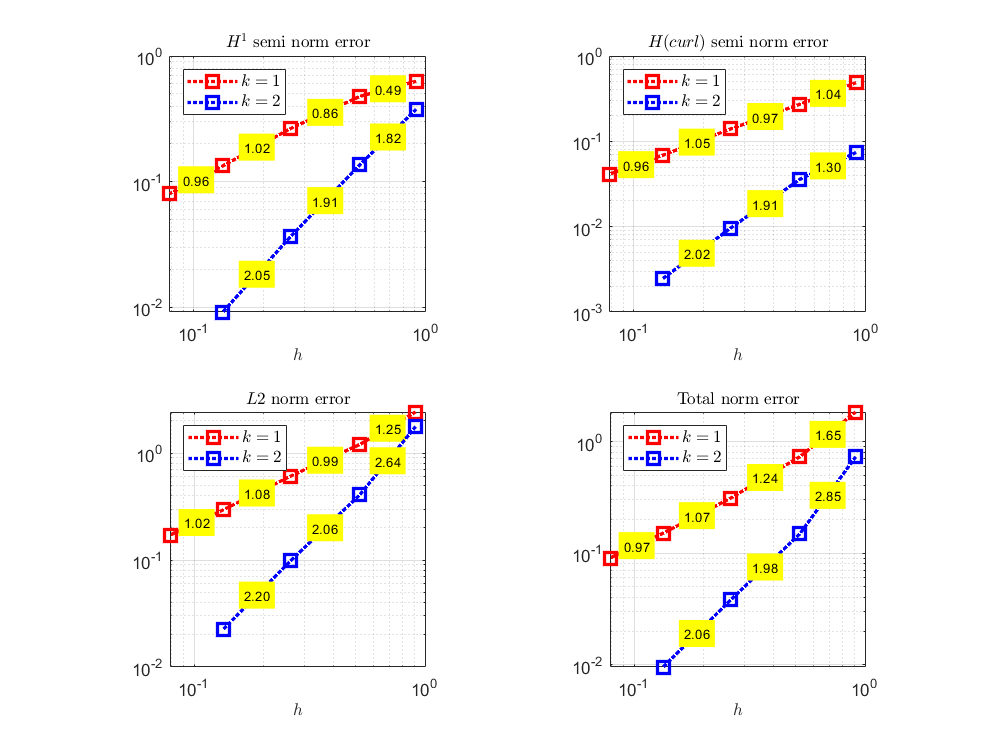}
\end{center}
\caption{Numerical results for Test 1, diffusion dominated case ($\nu = 1$).}
\label{fig:ex1_nu1}
\end{figure}

\begin{figure}[!htb]
\begin{center}
\includegraphics[width=\textwidth]{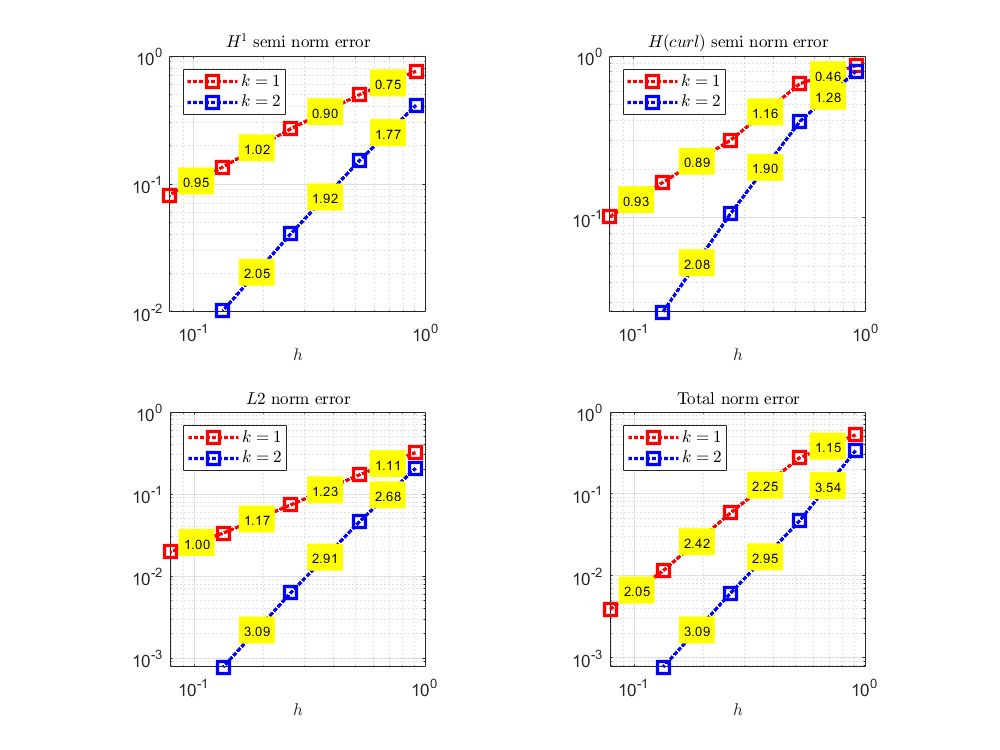}
\end{center}
\caption{Numerical results for Test 1, convection dominated case ($\nu = 10^{-6}$).}
\label{fig:ex1_nu1e-6}
\end{figure}

\paragraph{Test 2: a benchmark on a non-convex domain.} 
In this paragraph, we consider the non-convex domain depicted in Figure \ref{fig:meshes} (extruded L-shaped domain $\Omega = [-1,1]^3 \setminus ([-1,0)^2 \times [-1,1])$). The method proposed in \cite{BDV:2024} was devised for convex domains and, as such, makes use of $\HH^1$-conforming discrete magnetic fields. Therefore, in particular, it requires the exact magnetic field ${\bf B}$ to be in $\HH^1(\Omega)$. We here develop a numerical benchmark in order to 
\begin{enumerate}
    \item check from the practical perspective this limitation of the scheme in \cite{BDV:2024};
    \item show that the modified scheme here presented is suitable also for this kind of less regular problems. 
\end{enumerate}
Clearly, this comes at the price of a slightly higher dimensional system (when comparing the two schemes for the same order $k$).

We consider the solution of \eqref{eq:linear variazionale} given by the fields
\[
\uu(x,y,z) :=
\begin{bmatrix}
y^2 \\
z^2 \\
x^2
\end{bmatrix} \, ,
\qquad
\cc :=
\begin{bmatrix}
1 \\
2 \\
-1
\end{bmatrix} \, ,
\qquad
\TT :=
\begin{bmatrix}
1 \\
-1 \\
2
\end{bmatrix} \, .
\]
To construct the non-smooth solution for the magnetic field, we define the function
\[
r(x,y,z) = \sqrt{x^2+y^2}^\frac{2}{3} \sin\left(\frac{2}{3}\bigl(\arctan(y/x) + \frac{\pi}{2} \bigr)\right) \, ,
\]
and than we take 
\[
\BB(x,y,z) = \bnabla r(x,y,z) \, .
\]
We note that since $\BB$ is the gradient of a function, it holds $\bcurl(\BB) = 0.$
Furthermore $\BB \in [\HH^{\frac{2}{3}}(\Omega)]^3$ but $\BB \not\in [\HH^{1}(\Omega)]^3$.
We consider the same parameters of the previous example and we set $k=1$.
For the pressure, we simply consider
\[
p(x,y,z) = 0 \, .
\]
The results are shown in Figure \ref{fig:ex2_nu1} and Figure \ref{fig:ex2_nu1e-6}.
We note that the approach proposed in \cite{BDV:2024} fails to reach the optimal rate of convergence in the non-convex domain, the results becoming even less reliable for small values of the parameter $\nu$.
In fact, we observe that in many graphs the errors increase when reducing $h$.
Contrary, our approach converge as expected.

\begin{figure}[!htb]
\begin{center}
\includegraphics[width=\textwidth]{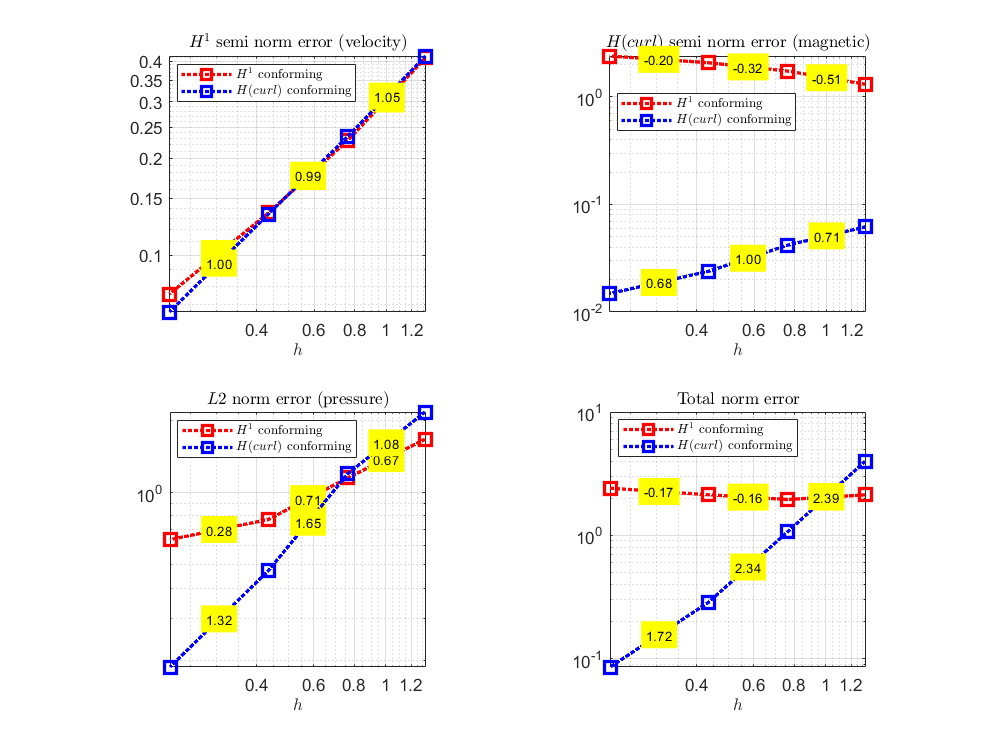}
\end{center}
\caption{Numerical results for the second test case corresponding to $\nu = 1$.}
\label{fig:ex2_nu1}
\end{figure}

\begin{figure}[!htb]
\begin{center}
\includegraphics[width=\textwidth]{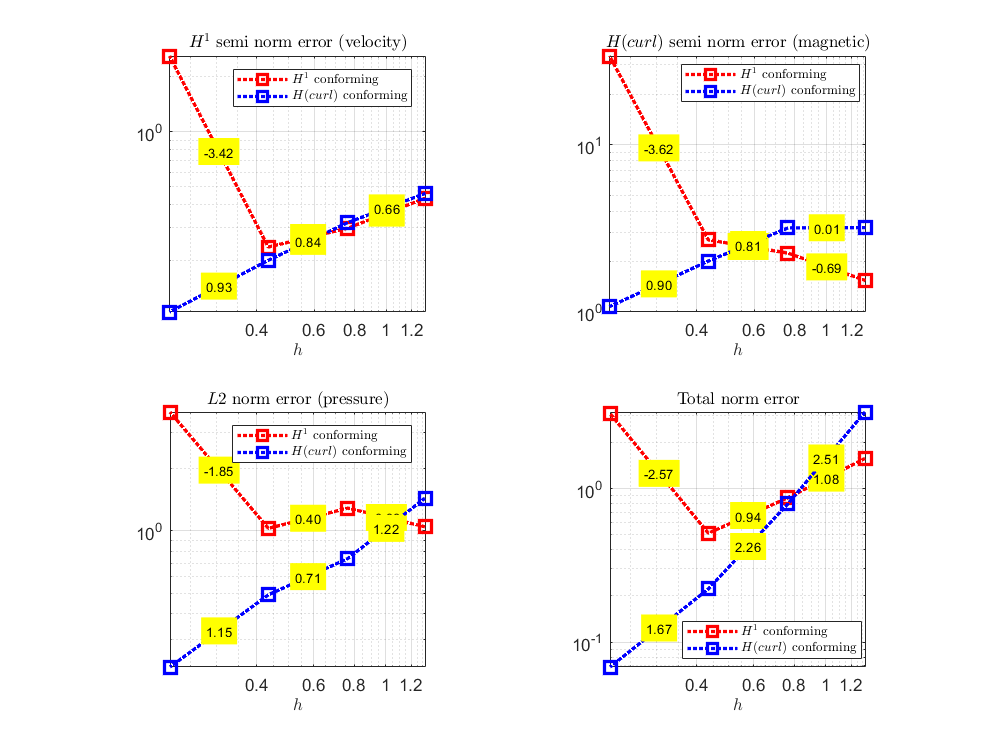}
\end{center}
\caption{Numerical results for the second test case corresponding to $\nu = 10^{-6}$.}
\label{fig:ex2_nu1e-6}
\end{figure}

\smallskip
\begin{center}
{\bf Aknowledgements} 
\end{center}

\medskip
LBDV and MT have been partially funded by the European Union (ERC Synergy, NEMESIS, project number 101115663).
Views and opinions expressed are however those of the authors only and do not necessarily reflect those of the European Union or the ERC Executive Agency. 
The authors have been partially supported by the INdAM Research group GNCS.
The authors are also indebted to Franco Dassi, from the University of Milano Bicocca, for his help and support in the initial stages of the method implementation.

\addcontentsline{toc}{section}{\refname}
\bibliographystyle{plain}
\bibliography{biblio,references}

\end{document}